\newcommand{\R}{\mathbb{R}}
\newcommand{\cU}{\mathcal{U}}
\newcommand{\diag}{\operatorname{diag}}
\newcommand{\C}{\mathbb{C}}
\newcommand{\bC}{\mathbf{C}}
\newcommand{\rL}{\mathrm{L}}
\newcommand{\bfo}{\mathbf{1}}
\newcommand{\bD}{{\mathbf{D}}}
\newcommand{\rank}{\operatorname{rank}}
\newcommand{\spec}{{\operatorname{spec}}}
\newcommand{\rrd}{\mathrm{d}}
\newcommand{\bV}{\mathbf{V}}
\newcommand{\bR}{\mathbf{R}}
\newcommand{\bF}{\mathbb{F}}
\newcommand{\bM}{\mathbf{M}}
\def\blfootnote{\xdef\@thefnmark{}\@footnotetext}
\newcommand{\xc}[1]{\vspace{.1cm}

\noindent {\em #1} }
\newcommand{\mab}[1]{\vspace{.1cm}

\noindent {\textbf{#1 }}} 
\newtheorem{definition}{Definition}
\newtheorem{theorem}{Theorem}[section]
\newtheorem{lemma}{Lemma}[section]
\newtheorem{proposition}[theorem]{Proposition}
\newtheorem{corollary}[theorem]{Corollary}
\newtheorem{problem}{Problem}
\newtheorem*{mainth*}{Main Theorem}
\title{On Infinite-horizon Minimum Energy Control}
\begin{document}

\author[Belabbas]{Mohamed-Ali Belabbas}
\address[Belabbas]{Electrical and Computer Engineering Department and Coordinated Science Laboratory, University of Illinois, Urbana-Champaign.}
\email[Belabbas]{belabbas@illinois.edu}
\author[Chen]{Xudong Chen}
\address[Chen]{Electrical and Systems Engineering, Washington University in St. Louis.}
\email[Chen]{cxudong@wustl.edu}
\thanks{Both authors contributed equally to this manuscript.}

\maketitle
\begin{abstract}
    We address the infinite-horizon minimum energy control problem for linear time-invariant finite-dimensional systems $(A, B)$. We show that the problem admits a solution if and only if $(A, B)$ is stabilizable and $A$ does not have imaginary eigenvalues.
\end{abstract}

\section{Introduction}

Let $\bF$ be either $\R$ or $\C$, the field of real or complex numbers. 
We consider in this paper finite-dimensional linear time-invariant systems:
\begin{equation}\label{eq:linsys}
    \dot x(t) = Ax(t) + Bu(t),
\end{equation}
where $A \in \bF^{n \times n}$, $B\in \bF^{n \times m}$,  $x(t) \in \bF^n$, and $u(t) \in \bF^m$.

Given $x_0, x_1 \in \bF^n$, the {\em finite-horizon  minimum energy control problem} for~\eqref{eq:linsys} is 
\begin{equation}\label{eq:minenergyprobfinT}
    \min_{u \in \mathrm{L}^2([0,T],\bF^m)
    } \eta_T (u):= \int_0^T \|u(t)\|^2 \rrd t \quad \mbox{s.t. } x(0) = x_0 \quad \mbox{and} \quad x(T)= x_1. 
\end{equation}

It is well known~\cite{brockett2015finite, wonham1974linear,kwakernaak1972linear} that if the pair $(A,B)$ is controllable, then the problem has a solution for any pair $(x_0, x_1)\in \bF^n\times \bF^n$. Moreover, the control law $u_T(t)$ that minimizes the cost is given by
\begin{equation}\label{eq:optimfinTu}
u_T(t) = - B^\dagger e^{ -A^\dagger t} W_{A, B}(T)^{-1} ( x_0 - e^{- AT} x_1), \quad \mbox{for } t \in [0, T],
\end{equation}
where $W_{A, B}(T)$ is the {\em controllability Gramian}: 
\begin{equation}\label{eq:gramian}
W_{A,B}(T) := \int_0^T e^{- A t} B B^\dagger e^{-A^\dagger t} \rrd t.
\end{equation}
Note that our definition of controllability Gramian  follows~\cite{brockett2015finite}, but other conventions are sometimes used whereby factors $e^{AT}$ and $e^{A^\dagger T}$ are added on the left and right, respectively.  

The corresponding minimal cost, i.e., the solution to~\eqref{eq:minenergyprobfinT}, is 
\begin{equation}\label{eq:mincost}(x_0 - e^{-AT} x_1)^\dagger W_{A, B}(T)^{-1} (x_0 - e^{-AT} x_1).
\end{equation}
In this paper, we are interested in the case where $x_1 = 0$, so the minimal cost is reduced to $x_0^\dagger W_{A, B}(T)^{-1} x_0$. 

The minimum energy control problem is a staple control problem, For the case $T<\infty$, it is entirely solved as mentioned above. However, the  {\em infinite-horizon} case remains surprisingly open. 
We provide a solution in this paper.

We now define the problem formally:

\begin{definition}[Admissible control]
    Let $(A, B)$ be stabilizable. 
    A control law $u: [0,\infty) \to \bF^m$ is {\em admissible} for $\dot x(t) = Ax(t)+Bu(t)$, with $x(0) = x_0$, if $u \in \mathrm{L}^2([0,\infty,),\bF^m)$ and if the solution $x(t)$ of the system satisfies $\lim_{t \to \infty} x(t)=0$. We denote the set of admissible controls by $\cU(x_0)$.
\end{definition}

The infinite-horizon minimum energy control problem is then given by:

\begin{problem}[Infinite-horizon minimum energy control]\label{prob:infinitehorizon}
Given $x_0\in \bF^n$, the infinite-horizon {\em minimum energy control problem} for system~\eqref{eq:linsys}, with $(A, B)$ stabilizable, is 
\begin{equation}\label{eq:minenergyprob}
    \min_{u \in \cU(x_0)} \eta(u):=\int_0^\infty \|u(t)\|^2 \rrd t \quad \mbox{s.t. } x(0) = x_0. 
\end{equation}
\end{problem}

The main theorem of the paper states a necessary and sufficient condition on the existence of solution to Problem~\ref{prob:infinitehorizon} for all $x_0\in \bF^n$.

\begin{theorem}\label{th:main0}
Let $\bF$ be either $\R$ or $\C$. 
Let $(A,B)\in \bF^{n\times n}\times \bF^{n\times m}$ be a stabilizable pair. The infinite-horizon minimum energy control problem admits a  solution for all $x_0 \in \bF^n$ if and only if $A$ has no  imaginary eigenvalue. 
\end{theorem}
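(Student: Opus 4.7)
The proof rests on the spectral decomposition of $A$ into stable ($\Re\spec<0$), center ($\Re\spec=0$), and antistable ($\Re\spec>0$) invariant subspaces. After a change of coordinates one has $A = \diag(A_s, A_c, A_u)$ with correspondingly partitioned $B$ and $x_0 = (x_0^s, x_0^c, x_0^u)$, and invariant subspaces $V_s, V_c, V_u$. Stabilizability of $(A,B)$ combined with the Hautus test forces both $(A_c,B_c)$ and $(A_u,B_u)$ to be controllable, while on $V_s$ any uncontrollable modes are already stable.

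\textbf{Sufficiency.} Assume $V_c = \{0\}$. Since $-A_u$ is Hurwitz, the Gramian $W_u(\infty) := \int_0^\infty e^{-A_u s}B_u B_u^\dagger e^{-A_u^\dagger s}\,ds$ is well-defined and, by controllability of $(A_u,B_u)$, positive definite. I propose as the minimizer
\[ u^*(t) := -B_u^\dagger e^{-A_u^\dagger t}\,W_u(\infty)^{-1}\, x_0^u. \]
A direct calculation in the spirit of \eqref{eq:optimfinTu}--\eqref{eq:mincost} shows that $u^*$ decays exponentially, drives $x_u(t)\to 0$, and has cost $\eta(u^*) = (x_0^u)^\dagger W_u(\infty)^{-1} x_0^u$; the stable component decays automatically because $A_s$ is Hurwitz and $u^*\in L^1\cap L^2$. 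For the matching lower bound on any $u\in\cU(x_0)$, apply \eqref{eq:mincost} to the antistable subsystem on $[0,T]$,
\[ \int_0^T\|u(t)\|^2\,dt \ \ge\ (x_0^u - e^{-A_u T}x_u(T))^\dagger W_u(T)^{-1}(x_0^u - e^{-A_u T}x_u(T)), \]
and let $T\to\infty$, using $x_u(T)\to 0$ (by admissibility), $\|e^{-A_u T}\|\to 0$ (antistability of $A_u$), and $W_u(T)\to W_u(\infty)$, to obtain $\eta(u)\ge \eta(u^*)$.

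\textbf{Necessity.} Suppose $i\omega\in\spec A$, and set $x_0$ equal to a right eigenvector $p$ of $A$ for $i\omega$ (taking a real or imaginary part if $\bF=\R$). Then $u\equiv 0$ is inadmissible because $\|e^{At}p\|=\|p\|$ for all $t$, so any candidate minimizer has strictly positive cost. It therefore suffices to exhibit a sequence $u_\epsilon\in\cU(p)$ with $\eta(u_\epsilon)\to 0$, thereby precluding any minimum. I use the composite feedback $u_\epsilon := -K_\epsilon^c x_c - K^u x_u$, where by pole-placement on the controllable pair $(A_c,B_c)$, $K_\epsilon^c$ shifts each eigenvalue of $A_c$ leftward by $\epsilon$ (so $\|K_\epsilon^c\|=O(\epsilon)$ by first-order perturbation), and $K^u$ is any fixed stabilizing gain for $(A_u,B_u)$. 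From $x_0=(0,p,0)$: the center block decays at rate $\epsilon$ giving $\|x_c\|_2^2=O(1/\epsilon)$; the antistable block starts at zero and is excited only by the $O(\epsilon)$ coupling $-B_uK_\epsilon^c x_c$, yielding $\|x_u\|_\infty=O(\epsilon)$ and $\|x_u\|_2^2=O(\epsilon)$; the stable block follows passively. Assembling, $\|u_\epsilon\|_2^2 \le 2(\|K_\epsilon^c\|^2\|x_c\|_2^2+\|K^u\|^2\|x_u\|_2^2)=O(\epsilon)\to 0$.

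\textbf{Main obstacle.} The most delicate step is the energy estimate in the necessity argument when $A_c$ has nontrivial Jordan blocks at $i\omega$: the state $x_c$ then carries polynomial-in-$t$ factors that interact with the pole-placement gain and require finer bookkeeping than the naive bound. A direct computation on the double-integrator case ($\dot x_1=x_2$, $\dot x_2=u$ starting from $x_0=(1,0)^\top$) with $u_\epsilon(t)=\epsilon^2 e^{-\epsilon t}(\epsilon t-1)$ yields $\|u_\epsilon\|_2^2=\epsilon^3/4$, showing the Jordan case still delivers $\eta(u_\epsilon)\to 0$ (with a sharper exponent). A uniform alternative is LQR regularization, minimizing $\int(\epsilon\|x\|^2+\|u\|^2)\,dt$: standard theory gives a monotone stabilizing family $P_\epsilon\downarrow P_0\ge 0$ to $A^\dagger P+PA-PBB^\dagger P+\epsilon I=0$; identifying $P_0$ as the minimum PSD solution of the reduced ARE whose closed-loop spectrum lies in the closed left half-plane yields $P_0 x_0=0$ for $x_0\in V_c$, hence $\eta(u_\epsilon)\le x_0^\dagger P_\epsilon x_0\to 0$.
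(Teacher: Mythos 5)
Your overall strategy is sound: for sufficiency you give an explicit admissible control and a matching finite\mbox{-}horizon Gramian lower bound, and for necessity you show that with $x_0=p$ an imaginary eigenvector the infimum of the cost is zero yet $u\equiv0$ is the only zero\mbox{-}cost control and it is inadmissible. The sufficiency argument is genuinely different from the paper's (which uses the Riccati solution $K$ and a completion\mbox{-}of\mbox{-}square identity, Proposition~\ref{prop:completionsquare}); your open\mbox{-}loop construction on the antistable block is more elementary and, once one verifies $x_{\mathsf u}(t)=W_u(\infty)e^{-A_u^\dagger t}W_u(\infty)^{-1}x_0^u\to 0$ (via $W_u(\infty)-W_u(t)=e^{-A_u t}W_u(\infty)e^{-A_u^\dagger t}$), correct.

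The gap is in the proof that $\inf_{u\in\cU(p)}\eta(u)=0$, which is exactly the technical heart of the paper. Two problems with the pole\mbox{-}placement construction: (a) The closed\mbox{-}loop matrix
\[
\begin{bmatrix} A_c - B_cK_\epsilon^c & -B_cK^u \\ -B_uK_\epsilon^c & A_u - B_uK^u\end{bmatrix}
\]
is not block\mbox{-}triangular; as $\epsilon\to 0$ the upper\mbox{-}left block degenerates to a marginally stable matrix while the upper\mbox{-}right coupling $-B_cK^u$ stays $O(1)$, and Hurwitzness of this matrix for small $\epsilon>0$ is not argued. Your energy estimate then treats $x_c$ as if it decoupled cleanly, and the bound $\|K_\epsilon^c\|^2\|x_c\|_2^2$ is not tight enough to close the loop between $x_c$ and $x_u$ without a fixed\mbox{-}point or block\mbox{-}diagonalization argument. (b) The LQR fallback asserts $P_0x_0=0$ for $x_0\in V_c$; but from the zero\mbox{-}$Q$ ARE and $Ap=\mathrm i\omega p$ one only deduces $B^\dagger P_0 p=0$, \emph{not} $P_0p=0$, and identifying the decreasing limit $P_0$ of $P_\epsilon$ with a solution whose kernel contains $V_c$ is exactly what needs to be proved; this is nontrivial when $A$ has imaginary eigenvalues (the paper's Corollary~\ref{cor:noimagA} handles only the opposite case). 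The paper supplies the missing step in the form of Theorem~\ref{thm:main}, $\lim_{T\to\infty} W_{A,B}(T)^{-1}=K$ with $Kp=0$, and then constructs the vanishing\mbox{-}cost sequence directly from finite\mbox{-}horizon minimum\mbox{-}energy controls extended by zero (Lemma~\ref{lem:upperboundforJ}), a route that sidesteps closed\mbox{-}loop stability questions entirely and makes the Jordan\mbox{-}block asymptotics explicit through the Gramian rather than through a gain estimate.
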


\mab{Extension to general quadratic cost:} 
Theorem~\ref{th:main0} still holds if we allow the cost in  Problem~\ref{prob:infinitehorizon} to be of the more general form $\int_0^\infty u(t)^\dagger R u(t) \rrd t$, for some positive definite matrix~$R$. To wit, given a triplet $(A,B, R)$ as parameters  for the optimization problem, with $(A, B)$ stabilizable and $R> 0$, 
we consider the  linear system  $\dot y(t) = A y(t) + BR^{-1/2} v(t)$. Clearly, the controllability and stabilizability properties of $(A,B)$ and $(A,BR^{-1/2})$ are the same. 
Also, it is not hard to see that the trajectory $x(t)$ generated by the control input $u(t)$ for system $(A, B)$ is identical to the trajectory $y(t)$ generated by the control $v(t):= R^{1/2} u(t)$ for system $(A, BR^{-1/2})$, as long as both trajectories have the same initial state. In particular, $x(t) \to 0$ if and only if $y(t) \to 0$. 
Further, note that $\|v(t)\| \leq \|R^{1/2}\| \|u(t)\|$ and, conversely, $\|u(t)\| \leq \|R^{-1/2}\| \|v(t)\|$ (the induced $2$-norm  is used here). Together, these items show that if $u$ is admissible for the infinite-horizon problem with parameter $(A,B,R)$, then $v(t)=R^{1/2}u(t)$ is admissible for the problem with parameter $(A,BR^{-1/2},I)$, which is exactly Problem~\ref{prob:infinitehorizon} for the pair $(A,BR^{-1/2})$. Moreover, if $u^*$ is the minimizer of the former, then $v^*$ is the minimizer of the latter, and vice versa.

\mab{The issue with imaginary eigenvalues.}
Problem~\ref{prob:infinitehorizon} is relatively simple if $A$ has no imaginary eigenvalues. In that case, if the pair $(A,B)$ is stabilizable, then there exists a unique optimal solution; we state a precise result in Proposition~\ref{prop:completionsquare} (with the notation of that Proposition, $\bV_{\hspace{-.04cm}\mathsf{a}} = \C^n$ when $A$ has no imaginary eigenvalues).  Moreover, the optimal control takes the familiar {\em feedback form} $u^*(t)= -B^\dagger K x(t)$, for a uniquely defined $K$ which is a solution to some Riccati equation. Implementing the optimal control leads to the closed-loop system $\dot x(t) = (A - BB^\dagger K) x(t)$.  

We will show that  the eigenvalues of $A$ and the eigenvalues of $(A-BB^\dagger K)$ are related in the following way: 
\begin{equation}
\mbox{if } a+\mathrm{i}b \in \spec(A), \mbox{ then}-|a|+\mathrm{i}b \in \spec(A-BB^\dagger K). \tag{$\star$}
\end{equation}
In terms of pole placement, the optimal control is a feedback control law that moves the poles of the original system that lie on the right half of the complex plane to their  symmetric locations about the imaginary axis. This remarkable behavior is shown in Proposition~\ref{prop:theta}.

The above  sheds some light on why the situation when $A$ has imaginary eigenvalues is more complex. When $A$ has no imaginary eigenvalue, the feedback law $u=-B^\dagger Kx$ solves Problem~\ref{prob:infinitehorizon} for {\em all} initial states $x_0 \in \bF^n$. However, if $A$ has imaginary eigenvalue,  by~$(\star)$, there exists initial states (e.g., initial states that are right-eigenvectors of $(A-BB^\dagger K)$ corresponding to imaginary eigenvalues) for which the above feedback control law is not even admissible. 
One is thus forced to either seek an optimal control law of other form or show that such optimal control law does not exist. We will show that the latter statement holds.


\mab{Outline of our approach:} The remaining sections of the paper are devoted to the proof of Theorem~\ref{th:main0}. 
The proof relies on two major results, which we state in Section~\ref{sec:mainresults}. For ease of exposition, we assume in this outline that $A$ is in its Jordan normal form. The first result (Theorem~\ref{thm:main} below) deals with the asymptotic behavior of the inverse of the controllability Gramian $W_{A, B}(T)$, for $(A, B)$ a controllable pair; specifically, we show that $W_{A, B}(T)^{-1}$ converges to the aforementioned matrix $K$, precisely defined in~\eqref{eq:limitK}, as $T\to\infty$. 
Its proof is quite technical and we come back to it below. The second result (Theorem~\ref{thm:main2p}) provides a necessary and sufficient condition on the subspace, denoted by $\bV_{\hspace{-.04cm}\mathsf{a}}$, of initial states $x_0$ for which a (unique) solution  to Problem~\ref{prob:infinitehorizon} exists. 
It turns out, as alluded in the previous paragraph, that this subspace is the entire state space if and only if $A$ has no imaginary eigenvalue. 
Note that Theorem~\ref{thm:main2p} is stated for $\bF=\C$. We state and prove the case of $\bF=\R$ as a corollary (see Corollary~\ref{cor:real}).

After stating these theorems, we present in Section~\ref{sec:proofthm} the proof of Theorem~\ref{thm:main2p}.  
In the proof of the sufficiency part, we assume that $x_0\in \bV_{\hspace{-.04cm}\mathsf{a}}$ and build the optimal control law explicitly, using a classical completion of square argument (see Subsection~\ref{ssec:s2thmm2p}). 
Proving the necessity of those conditions is more subtle;  in a nutshell, our proof goes by showing that the following two statements are true for $x_0\not\in\bV_{\hspace{-.04cm}\mathsf{a}}$: {$(i)$} The minimum value of Problem~\ref{prob:infinitehorizon}, if it exists, has to be $x_0^\dagger K x_0$; $(ii)$ There is no {\em admissible} control law that can attain such value of the cost function. Together, these statements preclude the existence of a solution for these initial states.
\begin{figure}[]
\centering
\begin{subfigure}[t]{0.6\textwidth}
        \includegraphics{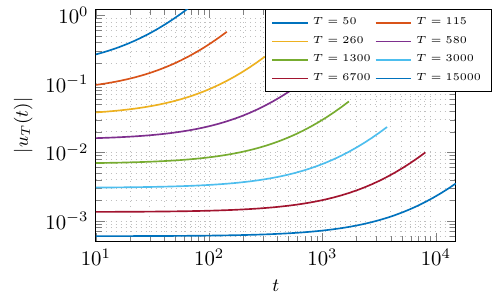}        \caption{}
    \end{subfigure}~\begin{subfigure}[t]{0.4\textwidth}
\includegraphics{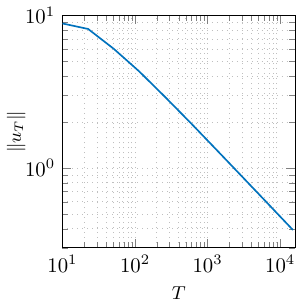}
       \caption{}
    \end{subfigure}%
\caption{For the controllable matrix pair $(A,B)$ with $A$ a $3$-by-$3$ Jordan block with zero eigenvalues, and $B=[0;0;1]$, we plot in (a) the absolute value of the optimal control $u_T(t)$ in~\eqref{eq:optimfinTu} which drives the system from $x_0=[1;1;1]$ to $x(T)=0$ for increasing $T$, and in (b) the $\mathrm{L}^2$ norm of $u_T$. We see that as $T$ grows, $\|u_T\|$ become smaller and, in the limit, convergence to $0$. However, since the system is not stabilizable, a zero control does not drive $x_0$ to the origin.  This illustrates the issue with imaginary eigenvalues.}
\end{figure}

The remainder of the paper, namely Sections~\ref{sec:pfimaginary},~\ref{sec:pfnonpositive}, and~\ref{sec:pfgeneral}, are devoted to the proof of Theorem~\ref{thm:main}. The proof is technical and requires us to track the asymptotic convergence rates of the submatrices of $W_{A, B}(T)^{-1}$ --- these submatrices are the ones  that arise from the decomposition of the spectrum of $A$ into eigenvalues with positive, negative, and zero real parts. 

As mentioned in~\eqref{eq:mincost}, for $(A, B)$ a controllable pair, $x_0^\dagger W_{A, B}(T)^{-1} x_0$ is the solution to the finite-horizon minium energy control problem.  
It is intuitive that the corresponding finite-horizon optimal control law for initial states in the stable subspace of $A$ 
converge to zero as $T\to\infty$, since the uncontrolled dynamics of the system flows such states to the origin. The corresponding principal submatrix of $W_{A,B}(T)^{-1}$ is shown to vanish asymptotically, coinciding with the expected behavior. 
Reciprocally, initial states in the unstable and in the center subspace of the system always require some control effort. While the  principal submatrix of $W_{A,B}(T)^{-1}$ corresponding to the unstable subspace converges to an invertible matrix (which we precisely characterize below), rather surprisingly, the principal submatrix corresponding to the center subspace converges to $0$, albeit at the slow rate of $1/T$.
This is a key result we will prove---and it is at the root of the non-existence of optimal controls in this case.

We approach the proof by first establishing Theorem~\ref{thm:main} for the special case where $A$ has only imaginary eigenvalues, and then using {\em nested} Schur complements to prove the theorem for the general case. The proof for the imaginary case is presented in Section~\ref{sec:pfimaginary}. Evaluating the Schur complement involves evaluating the asymptotic product of several submatrices of $W_{A, B}(T)$. However, we will see that these submatrices do not each converge on their own, and hence showing that their {\em product} converges is a delicate task. We approach it by introducing what we call below ``buffer terms'' (see Subsection~\ref{ssec:buff}) to show that some specific groupings of terms converge. Using this approach, together with the result established for the imaginary case, we prove Theorem~\ref{thm:main} first for the case where the eigenvalues of $A$ have non-positive real parts (Section~\ref{sec:pfnonpositive}) and then, for the general case (Section~\ref{sec:pfgeneral}).

\mab{Notations:} 
We denote by $\bfo$ the vector of all ones, whose dimension will be clear from context.
Given a complex matrix $Z$, we let $\bar Z$ be its complex conjugate and $Z^\dagger$ be its Hermitian conjugate. 
Given a vector space $\bV$ over $\bF$, we let $\dim_{\bF}\bV$ be its dimension.
Given a square matrix $A$, we denote by $\spec(A)$ its  set of eigenvalues, allowing for repeated eigenvalues.  Given a function $f:[0,\infty) \to \C$, we denote its $\mathrm{L}^2$-norm as $\|f\|_2 = (\int_0^\infty f^2(t)\rrd t)^{1/2}$. 
We say that a matrix pair $(A, B)\in \bF^{n\times n}\times \bF^{n\times m}$ is {\em controllable} if the rank of the controllability matrix
$$C(A, B) := 
\begin{bmatrix}
B & AB & \cdots & A^{n-1}B
\end{bmatrix}
$$
is full (i.e., $n$). 
Note in particular that if $A = \diag(A_1, A_2)$ is block diagonal and if we decompose $B = [B_1; B_2]$ so that the dimensions of $B_i$ match those of $A_i$, then $(A_i,B_i)$ is controllable for $i = 1,2$. The uncontrollable  of $(A,B)$ is defined as 
Further, we say that $(A, B)$ is stabilizable if there exists a matrix $F\in \bF^{m\times n}$ such that $(A + BF)$ is Hurwitz. It is well-known that $A$ is stabilizable if and only if the subspace spanned by the generalized eigenvectors of $A$ corresponding to eigenvalues with nonnegative real parts is included in the range of $C(A,B)$.

\section{Main Results}\label{sec:mainresults}

We  prove a stronger statement than Theorem~\ref{th:main0},  in that we  characterize the initial conditions for which the infinite horizon minimum energy control problem has a solution. We will state the result first for the case $\bF=\C$, and then prove in Corollary~\ref{cor:real} that the result also holds for the case $\bF=\R$.

From now on, we consider the complex case unless specified.  We start with some preliminaries.
Let $P\in \C^{n\times n}$ be a nonsingular matrix such that $P^{-1}AP$ is in the Jordan normal form~\cite{luenberger1967canonical}:
\begin{equation}\label{eq:PAP}
P^{-1}A P = 
\begin{bmatrix}
J_{\mathsf{u}} & 0 \\
0 & J_{\mathsf{s}}
\end{bmatrix} =:J,
\end{equation}
where the eigenvalues of $J_{\mathsf{u}}\in \C^{n_{\mathsf{u}}\times n_{\mathsf{u}}}$ (resp., $J_{\mathsf{s}}\in \C^{n_{\mathsf{s}}\times n_{\mathsf{s}}}$) have positive (resp. non-positive) real parts. Let $C:= P^{-1}B$ and we decompose 
\begin{equation}\label{eq:defC}
C = 
\begin{bmatrix}
C_{\mathsf{u}} \\
C_{\mathsf{s}} 
\end{bmatrix},
\end{equation}
where $C_{\mathsf{u}}\in \C^{n_{\mathsf{u}}\times m}$ and $C_{\mathsf{s}} \in \C^{n_{\mathsf{s}}\times m}$. For a later purpose, we further decompose 
\begin{equation}\label{eq:decompJ2C2}
J_{\mathsf{s}}=
\begin{bmatrix}
J_{\mathsf{o}} & 0 \\
0 & J_{\mathsf{a}}
\end{bmatrix} \quad \mbox{and} \quad 
C_{\mathsf{s}} = 
\begin{bmatrix}
C_{\mathsf{o}} \\
C_{\mathsf{a}}
\end{bmatrix}, 
\end{equation} 
where $J_{\mathsf{o}}\in \C^{n_{\mathsf{o}}\times n_{\mathsf{o}}}$ has imaginary eigenvalues and $J_{\mathsf{a}}\in \C^{n_{\mathsf{a}}\times n_{\mathsf{a}}}$ has eigenvalues with negative real parts; note that $n_{\mathsf{o}}+n_{\mathsf{a}}=n_{\mathsf{s}}$. We correspondingly decompose $C_{\mathsf{s}}$ as above. 
To summarize, the real parts of the eigenvalues of
\begin{itemize}
    \item $J_{\mathsf{u}}$ are positive;
    \item $J_{\mathsf{s}}$ are less than or equal to zero;
    \item $J_{\mathsf{o}}$ are zero;
    \item $J_{\mathsf{a}}$ are negative.
\end{itemize}

Let $(A, B)$ be a stabilizable pair. Then, $(J,C)$ is stabilizable as well and its controllable subspace contains the invariant subspace spanned by the generalized right eigenvectors of $J_{\mathsf{u}}$.  This, in particular, implies that $(J_{\mathsf{u}}, C_{\mathsf{u}})$ is controllable.  
Let 
\begin{equation}\label{eq:defW1}
W_{\hspace{-.04cm}\mathsf{u}}:= W_{J_{\mathsf{u}}, C_{\mathsf{u}}}(\infty) = \int_0^\infty e^{-J_{\mathsf{u}} t} C_{\mathsf{u}} C_\mathsf{u}^\dagger e^{-J_\mathsf{u}^\dagger t} \rrd t\in \C^{n_{\mathsf{u}}\times n_{\mathsf{u}}}, 
\end{equation} 
which is also the unique positive semi-definite solution to the following Lyapunov equation:
\begin{equation}\label{eq:lyapunovW1}
J_{\mathsf{u}} W_{\hspace{-.04cm}\mathsf{u}} + W_{\hspace{-.04cm}\mathsf{u}} J_{\mathsf{u}}^\dagger  - C_{\mathsf{u}}  C_{\mathsf{u}}^\dagger = 0.
\end{equation}
We further let
\begin{equation}\label{eq:limitK}
K:= (P^{-1})^\dagger
\begin{bmatrix}
W_{\hspace{-.04cm}\mathsf{u}}^{-1} & 0 \\
0 & 0 
\end{bmatrix} P^{-1} \in \C^{n \times n}.
\end{equation}
Note that if $n_{\mathsf{u}} = 0$, then $K = 0$. 
From~\eqref{eq:PAP} and~\eqref{eq:lyapunovW1}, we have that $K$ satisfies the Riccati equation:
\begin{equation}\label{eq:riccatiK}
A^\dagger K + K A - K BB^\dagger K = 0.
\end{equation}
The main technical result of this paper, on which the proof of Theorem~\ref{th:main0} is built, is the following:

\begin{theorem}\label{thm:main}
    Let $(A,B)\in \C^{n\times n} \times \C^{n\times m}$ be a controllable pair, and let $W_{A, B}(T)$ and $K$ be defined as in~\eqref{eq:gramian} and~\eqref{eq:limitK}, respectively. Then, 
    $$
    \lim_{T\to\infty} W_{A, B}(T)^{-1} = K.
    $$
\end{theorem}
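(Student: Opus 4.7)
My first step is to conjugate by $P$ so that $A$ becomes $J = \diag(J_\mathsf{u}, J_\mathsf{s})$ and $B$ becomes $C$; since $W_{A,B}(T) = P\, W_{J,C}(T)\, P^\dagger$, the theorem is equivalent to showing that $W_{J,C}(T)^{-1}$ converges to the block-diagonal matrix $\diag(W_{\hspace{-.04cm}\mathsf{u}}^{-1}, 0)$ in the $(\mathsf{u},\mathsf{s})$ partition. Write
\[
W_{J,C}(T) = \begin{bmatrix} M_{11}(T) & M_{12}(T) \\ M_{12}(T)^\dagger & M_{22}(T) \end{bmatrix}.
\]
The block-inversion identity expresses the $(1,1)$ block of $W_{J,C}(T)^{-1}$ as the inverse of the Schur complement $S(T) := M_{11}(T) - M_{12}(T) M_{22}(T)^{-1} M_{12}(T)^\dagger$, while the other blocks of the inverse are built from $S(T)^{-1}$, $M_{22}(T)^{-1}$, and $M_{12}(T)$. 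Since $J_\mathsf{u}$ has eigenvalues with positive real parts, $e^{-J_\mathsf{u} t}$ decays exponentially and $M_{11}(T) \to W_{\hspace{-.04cm}\mathsf{u}}$. The whole task then reduces to showing that $M_{22}(T)^{-1} \to 0$, that the correction $M_{12}(T) M_{22}(T)^{-1} M_{12}(T)^\dagger$ vanishes, and that the off-diagonal blocks of $W_{J,C}(T)^{-1}$ likewise tend to zero.

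\textbf{Imaginary base case.} I would first establish the theorem when $A = J_\mathsf{o}$ has only imaginary eigenvalues; here $K = 0$ and the claim reduces to $W_{J_\mathsf{o}, C_\mathsf{o}}(T)^{-1} \to 0$. For a Jordan block of size $k$ with eigenvalue $\iu\omega$, the entries of $e^{-J_\mathsf{o} t}$ are products of $e^{-\iu\omega t}$ with polynomials in $t$ of degree at most $k-1$, so the entries of $W_{J_\mathsf{o}, C_\mathsf{o}}(T)$ grow polynomially in $T$ at rates determined by the Jordan structure. I would introduce a diagonal rescaling $D(T) = \diag(T^{\alpha_i})$ with exponents read off from the Jordan structure, show by direct integration that $D(T)^{-1} W_{J_\mathsf{o}, C_\mathsf{o}}(T) D(T)^{-1}$ converges to a positive-definite Hermitian matrix (invertibility of the limit coming from controllability of $(J_\mathsf{o}, C_\mathsf{o})$), and thereby conclude $W_{J_\mathsf{o}, C_\mathsf{o}}(T)^{-1} \to 0$ at a polynomial rate.

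\textbf{Nested Schur complements and the main obstacle.} With the imaginary case in hand, I would treat the non-positive-real-part case by applying a second Schur complement inside $M_{22}(T)$, now partitioned into $(\mathsf{o},\mathsf{a})$ blocks: the $\mathsf{a}$-block is the Gramian of a system whose exponential $e^{-J_\mathsf{a} t}$ grows, so its inverse decays exponentially to $0$, while the $\mathsf{o}$-block is handled by the previous step. The final stage is the outer Schur complement between $J_\mathsf{u}$ and $J_\mathsf{s}$. The main obstacle is that the individual factors feeding these Schur complements do not each converge: the mixed block $M_{12}(T)$ is generally exponentially unbounded, since the integrand $e^{-J_\mathsf{u} t} C_\mathsf{u} C_\mathsf{s}^\dagger e^{-J_\mathsf{s}^\dagger t}$ is dominated by the larger of the two spectral abscissas, and after normalization the sub-blocks of $M_{22}(T)$ also fail to converge individually. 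Only carefully chosen groupings of divergent factors are well-behaved. Following the outline, I would introduce ``buffer terms'' inside the Schur complement so that the divergent pieces reassemble into Laplace-type integrals with integrable tails, whose limits can be computed in closed form. Checking that these regrouped limits assemble to give $S(T) \to W_{\hspace{-.04cm}\mathsf{u}}$, and that the remaining three blocks of $W_{J,C}(T)^{-1}$ vanish, is the technical heart of the proof.
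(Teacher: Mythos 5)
Your proposal follows essentially the same route as the paper: reduce to Jordan form, prove the purely-imaginary case by a polynomial diagonal rescaling $D(T)$ whose limit is invertible (the paper absorbs the extra $T^{1/2}$ into a separate $1/T$ factor and establishes invertibility via a clever auxiliary controllable pair $(\Pi,\Gamma)$, plus an oscillatory-integral estimate for cross-blocks with distinct eigenvalues), then build up to the general case through nested Schur complements over the $(\mathsf{o},\mathsf{a})$ and $(\mathsf{u},\mathsf{s})$ partitions, with buffer factors such as $e^{J_\mathsf{a}T}$, $e^{J_\mathsf{o}T}$ inserted so that groupings of individually divergent terms converge. The only places where your outline is lighter than the paper are the proof that the rescaled limit is nonsingular and the separate treatment of off-diagonal blocks for distinct imaginary eigenvalues, but both are correctly attributed to the controllability hypothesis and are consistent with the paper's argument.
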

 
\begin{figure}[h]
    \includegraphics{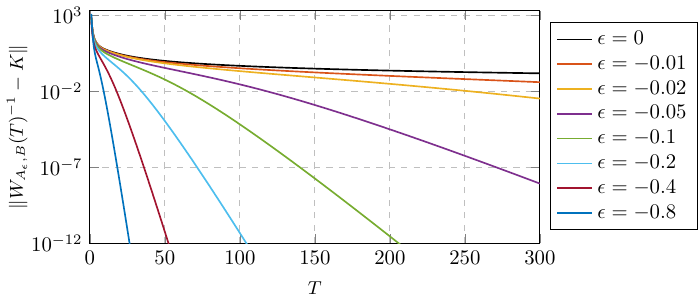}
    \caption{Illustration of the convergence of the inverse of the controllability Gramian  for different matrix pairs $(A_\epsilon, B)$.}\label{fig:fig_w}
\end{figure}
To illustrate Theorem~\ref{thm:main}, we computed numerically the norm of the  difference between $W_{A,B}(T)$ and its limit $K$ as defined in  the Theorem; the results are shown in Figure~\ref{fig:fig_w}. We did so for several matrix pairs $(A_\epsilon, B)$, with $A_\epsilon$ taking eigenvalues $3,2,-1$ and $\epsilon$, and observe that, as mentioned earlier, the convergence of $W_{A_\epsilon, B}(T)^{-1}$ gets very slow as $\epsilon \to 0$. 
Precisely, we consider a parameterized family of $(A_\epsilon, B)$ pairs: 
\begin{equation*}\label{eq:abpair}
A_\epsilon = 
\begin{bmatrix}
3 & 0 & 0 & 0 & 0 & 0 \\
0 & 2 & 0 & 0 & 0 & 0 \\ 
0 & 0 & \epsilon & 1 & 0 & 0 \\
0 & 0 & 0  & \epsilon & 1 & 0 \\
0 & 0 & 0 & 0 & \epsilon & 0 \\
0 & 0 & 0 & 0 & 0 & -1
\end{bmatrix} \quad \mbox{and} \quad 
B=
\begin{bmatrix}
0 & 1 \\
1 & 0 \\
0 & 0 \\
1 & 0 \\
0 & 1 \\
0 & 1
\end{bmatrix}.
\end{equation*}

We now introduce the following operator: given a complex number $\lambda = a+ \mathrm{i} b$, with $a,b \in \R$, we let $$\theta: \C \to \C: a+\mathrm{i}b \mapsto -|a|+\mathrm{i}b.$$
Note that $\theta(\lambda)$ always has a nonpositive real part.

Let $\bV_{\hspace{-.04cm}\mathsf{u}}$, $\bV_{\hspace{-.04cm}\mathsf{o}}$, and $\bV_{\hspace{-.04cm}\mathsf{a}}$  be the subspaces over $\C$ spanned by the generalized right eigenvectors of $(A-BB^\dagger K)$ corresponding to the eigenvalues with positive, zero, and negative real parts respectively. 

The following proposition, which relates the subspaces introduced above to the $\theta$ operator, is needed to state the next main result: 

\begin{proposition}\label{prop:theta}
    Let $(A, B)\in \C^{n\times n} \times \C^{n\times m}$ be a stabilizable pair, and $K$ be given as in~\eqref{eq:limitK}.  
    Then, 
    \begin{equation}\label{eq:abbkthetaa}
        \spec(A - BB^\dagger K) = \theta(\spec(A)).
        \end{equation} 
        In particular, 
        \begin{equation}\label{eq:tr1tr21tr22}
        \dim_\C \bV_{\hspace{-.04cm}\mathsf{u}} = 0,  \quad 
        \dim_\C \bV_{\hspace{-.04cm}\mathsf{o}} = n_{\mathsf{o}}, \quad \mbox{and} \quad
        \dim_\C \bV_{\hspace{-.04cm}\mathsf{a}} = n_{\mathsf{u}} + n_{\mathsf{a}}.
        \end{equation}
\end{proposition}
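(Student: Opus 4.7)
The plan is to transport everything to the Jordan basis used to define $K$, write $A-BB^\dagger K$ in block form there, and read off its spectrum. Write $\tilde A := P^{-1}(A-BB^\dagger K)P$; since $B = PC$ and $K = (P^{-1})^\dagger \diag(W_{\hspace{-.04cm}\mathsf{u}}^{-1},0) P^{-1}$, direct substitution gives
\[
\tilde A \;=\; J - CC^\dagger \begin{bmatrix} W_{\hspace{-.04cm}\mathsf{u}}^{-1} & 0 \\ 0 & 0\end{bmatrix}
\;=\; \begin{bmatrix} J_{\mathsf{u}} - C_{\mathsf{u}} C_{\mathsf{u}}^\dagger W_{\hspace{-.04cm}\mathsf{u}}^{-1} & 0 \\[2pt] -C_{\mathsf{s}} C_{\mathsf{u}}^\dagger W_{\hspace{-.04cm}\mathsf{u}}^{-1} & J_{\mathsf{s}} \end{bmatrix}.
\]
The crucial feature is that the upper-right block vanishes, so $\tilde A$ is block lower triangular and $\spec(A-BB^\dagger K) = \spec(J_{\mathsf{u}} - C_{\mathsf{u}} C_{\mathsf{u}}^\dagger W_{\hspace{-.04cm}\mathsf{u}}^{-1}) \cup \spec(J_{\mathsf{s}})$, counted with algebraic multiplicity.

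Next, I would use the Lyapunov identity~\eqref{eq:lyapunovW1}. Right-multiplying by $W_{\hspace{-.04cm}\mathsf{u}}^{-1}$ gives $C_{\mathsf{u}} C_{\mathsf{u}}^\dagger W_{\hspace{-.04cm}\mathsf{u}}^{-1} = J_{\mathsf{u}} + W_{\hspace{-.04cm}\mathsf{u}} J_{\mathsf{u}}^\dagger W_{\hspace{-.04cm}\mathsf{u}}^{-1}$, whence
\[
J_{\mathsf{u}} - C_{\mathsf{u}} C_{\mathsf{u}}^\dagger W_{\hspace{-.04cm}\mathsf{u}}^{-1} \;=\; -\,W_{\hspace{-.04cm}\mathsf{u}} J_{\mathsf{u}}^\dagger W_{\hspace{-.04cm}\mathsf{u}}^{-1},
\]
which is similar to $-J_{\mathsf{u}}^\dagger$. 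Hence its eigenvalues are exactly $\{-\bar\lambda : \lambda \in \spec(J_{\mathsf{u}})\}$, with the same algebraic multiplicities. Note that $(J_{\mathsf{u}}, C_{\mathsf{u}})$ is controllable by the stabilizability of $(A,B)$, so $W_{\hspace{-.04cm}\mathsf{u}}$ is positive definite and the inverse is well-defined.

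To finish~\eqref{eq:abbkthetaa}, I would check that $\theta$ acts correctly block-by-block. For $\lambda \in \spec(J_{\mathsf{u}})$ (so $\re\lambda>0$), $-\bar\lambda = -\re\lambda + \mi\, \im\lambda = -|\re\lambda|+\mi\, \im\lambda = \theta(\lambda)$. For $\mu \in \spec(J_{\mathsf{s}})$ (so $\re\mu\le 0$), $\theta(\mu) = -|\re\mu|+\mi\, \im\mu = \mu$. Counting multiplicities then gives~\eqref{eq:abbkthetaa}. The dimension equalities~\eqref{eq:tr1tr21tr22} follow immediately from the bookkeeping: $\theta$ never produces a positive real part, giving $\dim_\C \bV_{\hspace{-.04cm}\mathsf{u}}=0$; purely imaginary eigenvalues of $\tilde A$ come only from the fixed points of $\theta$ sitting in $\spec(J_{\mathsf{o}})$, giving $\dim_\C \bV_{\hspace{-.04cm}\mathsf{o}} = n_{\mathsf{o}}$; eigenvalues with strictly negative real part come from $\spec(J_{\mathsf{a}})$ (contributing $n_{\mathsf{a}}$) and from $\spec(-W_{\hspace{-.04cm}\mathsf{u}} J_{\mathsf{u}}^\dagger W_{\hspace{-.04cm}\mathsf{u}}^{-1})$ (contributing $n_{\mathsf{u}}$), totaling $n_{\mathsf{u}}+n_{\mathsf{a}}$.

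The computation is essentially mechanical; the one point that requires care is the block-triangular cancellation of the upper-right corner, which is what makes the problem split cleanly along the stable/unstable decomposition and is the reason the Riccati solution acts only on the unstable subspace. The rest is an application of the Lyapunov identity and of the elementary observation that similarity to $-J_{\mathsf{u}}^\dagger$ conjugates and negates eigenvalues, which is exactly the action of $\theta$ on the right half-plane.
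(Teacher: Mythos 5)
Your proof is correct and follows essentially the same path as the paper: transport $A-BB^\dagger K$ to the Jordan basis, observe the block lower-triangular structure, and use the Lyapunov identity~\eqref{eq:lyapunovW1} to exhibit the top-left block as similar to $-J_{\mathsf{u}}^\dagger$, from which the action of $\theta$ on the spectrum follows. Your version is slightly more explicit about algebraic multiplicities and the block-triangular spectral split, but the core mechanism is identical.
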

\begin{proof}
     
We define
\begin{equation}\label{eq:deftildeK}
\widetilde K := P^\dagger K P = 
\begin{bmatrix}
W_{\hspace{-.04cm}\mathsf{u}}^{-1} & 0 \\
0 & 0
\end{bmatrix}. 
\end{equation}
Clearly, $\spec(P^{-1}(A-BB^\dagger K)P) = \spec(A-BB^\dagger K)$ and
\begin{equation*}
P^{-1}(A - BB^\dagger K) P = J - C C^\dagger  \widetilde K = 
\begin{bmatrix}
J_{\mathsf{u}} - C_{\mathsf{u}} C_{\mathsf{u}}^\dagger W_{\hspace{-.04cm}\mathsf{u}}^{-1} & 0 \\
-C_{\mathsf{s}} C_{\mathsf{u}}^\dagger W_{\hspace{-.04cm}\mathsf{u}}^{-1} & J_{\mathsf{s}}
\end{bmatrix}.
\end{equation*}
By construction, the eigenvalues of $J_{\mathsf{s}}$ have non-positive real parts, and are thus mapped to themselves by $\theta$. Hence,~\eqref{eq:abbkthetaa} follows if we  show that 
\begin{equation}\label{eq:specLambda1}
\spec(J_{\mathsf{u}} - C_{\mathsf{u}} C_{\mathsf{u}}^\dagger W_{\hspace{-.04cm}\mathsf{u}}^{-1}) = \theta(\spec(J_{\mathsf{u}})).
\end{equation}
To establish~\eqref{eq:specLambda1}, 
we multiply~\eqref{eq:lyapunovW1} by $W_{\hspace{-.04cm}\mathsf{u}}^{-1}$ on the right (recall that $W_{\hspace{-.04cm}\mathsf{u}}$ is nonsingular), we obtain that
$$
W_{\hspace{-.04cm}\mathsf{u}}J_{\mathsf{u}}^\dagger W_{\hspace{-.04cm}\mathsf{u}}^{-1} = -(J_{\mathsf{u}} - C_{\mathsf{u}} C_{\mathsf{u}}^\dagger W_{\hspace{-.04cm}\mathsf{u}}^{-1}),  
$$
and hence,  
$$\spec(J_{\mathsf{u}}^\dagger)= \spec(W_{\hspace{-.04cm}\mathsf{u}}J_{\mathsf{u}}^\dagger W_{\hspace{-.04cm}\mathsf{u}}^{-1}) = - \spec(J_{\mathsf{u}} - C_{\mathsf{u}}C_{\mathsf{u}}^\dagger W_{\hspace{-.04cm}\mathsf{u}}^{-1}).$$
Note that if  $\lambda$ has a {\em positive} real part,  then  $\theta(\lambda)= - \bar \lambda$. Now  let  $\lambda \in \spec(J_{\mathsf{u}})$ (thus $\lambda$ has a positive real part), then  $\bar \lambda \in \spec(J_{\mathsf{u}}^\dagger)$ and, by the above relation, $-\bar \lambda \in \spec(J_{\mathsf{u}} - C_{\mathsf{u}}C_{\mathsf{u}}^\dagger W_{\hspace{-.04cm}\mathsf{u}}^{-1})$,
which proves~\eqref{eq:specLambda1}. The second part of the proposition i.e.,~\eqref{eq:tr1tr21tr22} follows directly from~\eqref{eq:abbkthetaa}.
\end{proof}

Using Theorem~\ref{thm:main}, we will prove the following result, which is essentially Theorem~\ref{th:main0} for $\bF=\C$:

\begin{theorem}\label{thm:main2p}
    Let $(A, B) \in \C^{n \times n} \times \C^{n \times m}$ be a stabilizable pair, and $K$ be given as in~\eqref{eq:limitK}.  
    Then, Problem~\ref{prob:infinitehorizon} with $\bF=\C$ has a solution if and only if  $x_0\in \bV_{\hspace{-.04cm}\mathsf{a}}$ and it is then given by  $x_0^\dagger K x_0$. Moreover, there exists a unique control law $u^*\in \cU(x_0)$ that minimizes the cost and it is given by $u^*(t) := - B^\dagger K x(t)$ for $t\geq 0$. 
\end{theorem}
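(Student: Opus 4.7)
The plan is to base both directions on a single completion-of-squares identity derived from the Riccati equation~\eqref{eq:riccatiK}. For any $u \in \cU(x_0)$ with corresponding trajectory $x(\cdot)$, differentiating $x^\dagger K x$ in $t$ and substituting $A^\dagger K + KA = K BB^\dagger K$ gives
\begin{equation*}
\|u(t)\|^2 + \frac{\rrd}{\rrd t}\bigl(x(t)^\dagger K x(t)\bigr) = \|u(t) + B^\dagger K x(t)\|^2.
\end{equation*}
Integrating on $[0,T]$ and letting $T\to\infty$, admissibility ($x(T)\to 0$ and $K$ bounded) yields the master formula
\begin{equation*}
\eta(u) = x_0^\dagger K x_0 + \int_0^\infty \|u(t) + B^\dagger K x(t)\|^2 \, \rrd t,
\end{equation*}
so $\eta(u) \geq x_0^\dagger K x_0$ for every $u \in \cU(x_0)$, with equality if and only if $u(t) = -B^\dagger K x(t)$ almost everywhere.

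For sufficiency, I would assume $x_0 \in \bV_{\hspace{-.04cm}\mathsf{a}}$ and set $u^*(t) := -B^\dagger K x(t)$ along the closed-loop trajectory $\dot x = (A - BB^\dagger K)x$, $x(0)=x_0$. Proposition~\ref{prop:theta} identifies $\bV_{\hspace{-.04cm}\mathsf{a}}$ as the strictly stable invariant subspace of $A - BB^\dagger K$, so $x(t)$ decays exponentially; hence $u^* \in \mathrm{L}^2$ and $u^* \in \cU(x_0)$. The identity then gives $\eta(u^*) = x_0^\dagger K x_0$, and uniqueness follows because the equality condition $u(t) = -B^\dagger K x(t)$ together with $x(0)=x_0$ pins down both $x$ and $u$ through the closed-loop ODE.

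For necessity, I would assume $x_0 \notin \bV_{\hspace{-.04cm}\mathsf{a}}$ and follow the two-step outline announced in the Introduction. Step (ii) is the easy half: if $\eta(u) = x_0^\dagger K x_0$ for some $u \in \cU(x_0)$, the identity forces $u(t) = -B^\dagger K x(t)$ a.e., so $x$ solves $\dot x = (A - BB^\dagger K)x$ with $x(0)=x_0$; admissibility $x(t)\to 0$ then places $x_0$ in $\bV_{\hspace{-.04cm}\mathsf{a}}$ by Proposition~\ref{prop:theta}, contradicting the hypothesis. Step (i) is to show $\inf_{u\in\cU(x_0)} \eta(u) = x_0^\dagger K x_0$, so that the failure in step (ii) really rules out a minimizer. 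The $\geq$ half is the identity; for $\leq$ I would invoke Theorem~\ref{thm:main}: when $(A,B)$ is controllable, the finite-horizon optimal control~\eqref{eq:optimfinTu} driving $x_0$ to the origin at time $T$, extended by zero on $(T,\infty)$, is admissible and has cost $x_0^\dagger W_{A,B}(T)^{-1} x_0 \to x_0^\dagger K x_0$. For a general stabilizable pair, I would pass through a Kalman decomposition separating a controllable block (which, by stabilizability, necessarily contains every imaginary eigenvalue of $A$) from a Hurwitz uncontrollable block; the uncontrollable component decays autonomously, while the controllable-case argument is applied to the controllable block forced by the free evolution of the Hurwitz one.

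The main obstacle is the $\leq$ bound in step (i) of necessity when $(A,B)$ is stabilizable but not controllable: Theorem~\ref{thm:main} is formulated under controllability and cannot be invoked directly, so one has to verify that the infimum of the \emph{forced} finite-horizon minimum-energy problem for the controllable block still converges, as $T\to\infty$, to the appropriate principal block of $x_0^\dagger K x_0$, and that the cross-terms generated by the coupling to the Hurwitz block vanish in the limit. Sufficiency and step (ii) of necessity, by contrast, are direct formal consequences of the Riccati identity together with Proposition~\ref{prop:theta}.
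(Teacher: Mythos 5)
Your sufficiency argument and the ``equality forces the feedback law'' half of necessity are correct and coincide with the paper's: the completion-of-squares identity gives $\eta(u) \ge x_0^\dagger K x_0$ for every $u\in\cU(x_0)$ and pins down the trajectory as the closed-loop flow whenever equality holds, after which Proposition~\ref{prop:theta} kills admissibility for $x_0\notin\bV_{\hspace{-.04cm}\mathsf{a}}$. You streamline the paper slightly by reading both conclusions off the identity directly instead of routing them through the auxiliary relaxed-terminal-condition problem~\eqref{eq:minenergyprobunstable} and its uniqueness clause, but the content is the same.

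The gap you flag in the upper bound $\inf_{u\in\cU(x_0)}\eta(u)\le x_0^\dagger K x_0$ is real, and the Kalman decomposition you reach for is the wrong tool for closing it. Kalman's controllability decomposition is only block-\emph{triangular}: the Hurwitz uncontrollable block feeds forward into the controllable block through a generically nonzero coupling term, so you would indeed face a \emph{forced} finite-horizon problem with cross-terms to estimate, and in general there is no $A$-invariant complement to the controllable subspace that would make this forcing vanish. The paper sidesteps the issue entirely by decomposing by spectrum rather than by controllability. After bringing $A$ to Jordan form, $J=\diag(J_{\mathsf u},J_{\mathsf o},J_{\mathsf a})$ is block-\emph{diagonal}, and stabilizability implies the unstable-plus-center block $(J_1,C_1)$ with $J_1=\diag(J_{\mathsf u},J_{\mathsf o})$ is \emph{controllable}, since every uncontrollable mode must have negative real part. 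One then takes $u_T$ to be the finite-horizon minimum-energy control for the \emph{unforced} subsystem $\dot y_1 = J_1 y_1 + C_1 u$ steering $y_1(T)=0$, extends it by zero beyond $T$, and notes admissibility is automatic: $y_{\mathsf a}$ is affected by $u_T$ on $[0,T]$ but then decays because $J_{\mathsf a}$ is Hurwitz. The cost is exactly $y_{0,1}^\dagger W_{J_1,C_1}(T)^{-1}y_{0,1}$, which converges to $x_0^\dagger K x_0$ by Theorem~\ref{thm:main} applied to the controllable pair $(J_1,C_1)$. So the obstacle you identify disappears with the right choice of decomposition rather than with a new estimate on cross-terms.
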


Theorem~\ref{thm:main2p} also holds in the real case, i.e., for  real system matrices $A, B$, with real state $x(t)$ and real control $u(t)$. We establish this now.

First, note that if the pair $(A, B)$ is real, then $W_{A, B}(T)$ is real for all $T> 0$. Thus, by Theorem~\ref{thm:main}, $K$ is real, and so is $(A - BB^\top K)$.
It follows that the complex vector spaces $\bV_{\hspace{-.04cm}\mathsf{o}}$ and $\bV_{\hspace{-.04cm}\mathsf{a}}$ are closed under conjugation, i.e., if $v \in \bV_{\hspace{-.04cm}*}$, then $\bar v \in \bV_{\hspace{-.04cm}*}$ for $* = \mathsf{o},\mathsf{a}$. 

Second, note that any complex vector space $\bV \subseteq \C^n$ of dimension~$k$ that is closed under conjugation admits a basis of {\em real} vectors. Indeed, if $\{v_1 + \mathrm{i} w_{1},\ldots, v_k + \mathrm{i} w_k\}$, with $v_j,w_j \in \R^n$, is such a basis, then so is $\{v_1 - \mathrm{i} w_1, \ldots, v_k - \mathrm{i} w_k\}$. Thus, $v_j, w_j \in \bV$ and, moreover, their span over $\C$ is $\bV$.

The above two items together show that $\bV_{\hspace{-.04cm}*}$, for $* = \mathsf{o},\mathsf{a}$, admits a basis of {\em real} vectors. 
We then let $\bR_{*}$ be the vector space, over $\R$, spanned by these vectors. 
In particular, by Proposition~\ref{prop:theta}, we have that $\dim_{\R} \bR_{\mathsf{o}} = n_{\mathsf{o}}$ and $\dim_{\R} \bR_\mathsf{a} = n_{\mathsf{u}} + n_{\mathsf{a}}$. 

We now state the counterpart of Theorem~\ref{thm:main2p} for the real case:

\begin{corollary}[The real case]\label{cor:real}
     Let $(A, B) \in \R^{n\times n} \times \R^{n \times m}$ be a stabilizable pair, and $K\in \R^{n\times n}$ be given as in~\eqref{eq:limitK}.  
    Then, Problem~\ref{prob:infinitehorizon} with $\bF = \R$ has a solution if and only if  $x_0\in \bR_\mathsf{a}$ and it is then given by  $x_0^\top K x_0$. Moreover, there exists a unique control law $u^*\in \cU(x_0)$ that minimizes the cost and it is given by $u^*(t) = - B^\top K x(t)$ for $t\geq 0$. 
\end{corollary}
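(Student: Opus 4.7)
The strategy is a reduction to Theorem~\ref{thm:main2p}: every real admissible control is complex admissible, and conversely, the real part of any complex admissible control (for a real initial state) is real admissible with no larger $\rL^2$-norm. Together with the conjugation-closedness of $\bV_{\hspace{-.04cm}\mathsf{a}}$, this will let us transfer both the existence criterion and the explicit feedback form to the real setting. Throughout, write $\cU_\R(x_0)$ and $\cU_\C(x_0)$ for the admissible sets of Problem~\ref{prob:infinitehorizon} with $\bF=\R$ and $\bF=\C$, respectively.

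I would first record the linking identity $\bR_{\mathsf{a}} = \bV_{\hspace{-.04cm}\mathsf{a}} \cap \R^n$. The inclusion $\bR_{\mathsf{a}} \subseteq \bV_{\hspace{-.04cm}\mathsf{a}}\cap \R^n$ is immediate from the definition of $\bR_{\mathsf{a}}$. For the reverse inclusion, expand a real $x_0 \in \bV_{\hspace{-.04cm}\mathsf{a}}$ in the chosen real basis $\{e_j\}$ of $\bV_{\hspace{-.04cm}\mathsf{a}}$ with coefficients $c_j \in \C$; the equality $x_0 = \bar{x_0}$ forces $c_j = \bar{c_j} \in \R$, so $x_0 \in \bR_{\mathsf{a}}$. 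Consequently, for real $x_0$, we have $x_0\in \bR_{\mathsf{a}}$ if and only if $x_0 \in \bV_{\hspace{-.04cm}\mathsf{a}}$.

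Next, fix $x_0 \in \R^n$ and denote by $\eta_\R^*$ and $\eta_\C^*$ the infimum of Problem~\ref{prob:infinitehorizon} over $\cU_\R(x_0)$ and $\cU_\C(x_0)$, respectively; clearly $\eta_\R^* \geq \eta_\C^*$. For the reverse inequality, let $u \in \cU_\C(x_0)$ generate $x(t) = e^{At}x_0 + \int_0^t e^{A(t-s)} B u(s) \rrd s$. Since $A, B, x_0$ are real, the real part $\re(x(t))$ is exactly the trajectory driven by $\re(u)$ from $x_0$, and $x(t) \to 0$ forces $\re(x(t)) \to 0$. Hence $\re(u) \in \cU_\R(x_0)$ with $\|\re(u)\|_2^2 \leq \|u\|_2^2$, giving $\eta_\R^* \leq \eta_\C^*$ and equality of the two infima.

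With these two pieces, both halves of the corollary are quick. For sufficiency, if $x_0 \in \bR_{\mathsf{a}} \subseteq \bV_{\hspace{-.04cm}\mathsf{a}}$, Theorem~\ref{thm:main2p} furnishes the unique complex minimizer $u^*(t) = -B^\dagger K x(t) = -B^\top K x(t)$ with cost $x_0^\top K x_0$; since $A,B,K,x_0$ are all real, the closed-loop trajectory remains in $\R^n$ and thus $u^* \in \cU_\R(x_0)$, attaining the common infimum. Uniqueness over $\cU_\R(x_0)$ follows from uniqueness over $\cU_\C(x_0)$ together with $\eta_\R^*=\eta_\C^*$. For necessity, if $x_0 \notin \bR_{\mathsf{a}}$ then $x_0 \notin \bV_{\hspace{-.04cm}\mathsf{a}}$, and Theorem~\ref{thm:main2p} rules out any complex minimizer; any hypothetical real minimizer would, by the equality of infima, attain $\eta_\C^*$, a contradiction. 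The only mildly delicate point is the identity $\eta_\R^*=\eta_\C^*$, which ultimately rests on the observation that taking real parts commutes with a real LTI system and does not increase the $\rL^2$-norm.
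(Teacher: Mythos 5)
Your proof is correct and takes essentially the same route as the paper: both rely on the conjugation-closedness of $\bV_{\hspace{-.04cm}\mathsf{a}}$ (equivalently your identity $\bR_{\mathsf{a}} = \bV_{\hspace{-.04cm}\mathsf{a}}\cap\R^n$) and the observation that taking the real part of a complex admissible control yields a real admissible control without increasing the $\rL^2$ cost, then reduce to Theorem~\ref{thm:main2p}. The only stylistic difference is that you first package the real-part trick into the clean identity $\eta_\R^*=\eta_\C^*$ and derive both directions from it, whereas the paper deploys the same trick inline inside a contradiction argument for the necessity direction; the mathematical content is identical.
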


\begin{proof}
    We will denote the set of admissible {\em real-} and {\em complex-}valued control laws by $\cU_\R(x_0)$ and $\cU_\C(x_0)$, respectively. 
    
    We assume that $x_0\in \bR_\mathsf{a}$ and show that Problem~\ref{prob:infinitehorizon} admits the solution $\eta(u^*) = x_0^\top K x_0$, with $u^* \in \cU_\R(x_0)$ the unique minimizer. 
    First, since $K$ and $B$ are real, $u^* \in \cU_\R(x_0)$. 
    Next, since $\bR_\mathsf{a}$ is contained in $\bV_{\hspace{-.04cm}\mathsf{a}}$ by the arguments above Corollary~\ref{cor:real}, we have that $x_0\in \bV_{\hspace{-.04cm}\mathsf{a}}$. 
    Hence, we can apply Theorem~\ref{thm:main2p} to system~\eqref{eq:linsys} with initial condition $x_0$, and it yields that  $u^*(t) = -B^\top Kx(t)$  is the unique minimizer for Problem~\ref{prob:infinitehorizon}, but with $\bF=\C$, and the cost is $\eta(u^*)=x_0^\top K x_0$.     
    Because an admissible solution for Problem~\ref{prob:infinitehorizon} with $\bF=\R$ is also admissible with $\bF=\C$, and because the (unique) minimizer $u^*$ of the latter problem is an element of $\cU_{\R}(x_0)$, we have that $u^*$ is the unique minimizer for the former problem.

    We now assume that $x_0\not\in \bR_\mathsf{a}$ and show that Problem~\ref{prob:infinitehorizon} with $\bF = \R$ does not admit a solution. 
    Proceeding by contradiction, assume that it admits a minimizer $v^* \in \cU_\R(x_0)$. First, note that $x_0 \notin \bV_{\hspace{-.04cm}\mathsf{a}}$; indeed, since $\bV_{\hspace{-.04cm}\mathsf{a}}$ admits a basis of real vectors and since $x_0$ is real, if $x_0 \in \bV_{\hspace{-.04cm}\mathsf{a}}$, then it can be expressed as a linear combination of these real vectors with necessarily real coefficients, which then implies that $x_0\in \bR_\mathsf{a}$.  
    Next,  since $x_0 \notin \bV_{\hspace{-.04cm}\mathsf{a}}$, Theorem~\ref{thm:main2p} implies that $v^*$ is not a minimizer of Problem~\ref{prob:infinitehorizon} for $\bF = \C$. Hence, there exists  control law $v'\in \cU_{\C}(x_0)$ such that 
    \begin{equation}\label{eq:etav'}
    \eta(v') < \eta(v^*).
    \end{equation}
    We write $v'(t) = v'_1(t) + \mathrm{i}v'_2(t)$, where $v'_1(t), v'_2(t) \in \R^m$. We similarly decompose the solution of $\dot x(t) = Ax(t) +Bv'(t)$ as $x(t)=x_1(t) + \mathrm{i}x_2(t)$ with $x_i(t) \in \R^n$. Since $A$ and $B$ are real, the dynamics of $x_1(t)$ and $x_2(t)$ are completely decoupled, i.e.,
    \begin{equation}\label{eq:splitode}
    \dot x_i(t) = Ax_i(t) +B v'_i(t), \quad \mbox{ with } x_{0,1}=x_0 \mbox{ and } x_{0,2}=0.
    \end{equation}
    Since $v'$ is such that $\lim_{t \to \infty} x(t)=0$, the solution of~\eqref{eq:splitode} with $i=1$ satisfies $\lim_{t \to \infty} x_1(t)=0$. Moreover, 
    \begin{equation}\label{eq:splitcost}
    \int_0^\infty \|v'_1(t)\|^2 \rrd t \leq \int_0^\infty \|v'(t)\|^2 \rrd t < \infty.
    \end{equation} 
    The above arguments show that $v'_1 \in \cU_\R(x_0)$. But then,~\eqref{eq:etav'} and~\eqref{eq:splitcost} together imply that $\eta(v_1') < \eta(v^*)$, which   contradicts the optimality of $v^*$ and proves the result.
\end{proof}

We now prove Theorems~\ref{thm:main} and~\ref{thm:main2p}. The proof of the first theorem is rather technical, and we delay it until after the proof of Theorem~\ref{thm:main2p}.

\section{Proof of Theorem~\ref{thm:main2p}}\label{sec:proofthm}
We prove the necessity and sufficiency in Subsections~\ref{ssec:s2thmm2p} and~\ref{ssec:necessityTH}, respectively. The sufficiency is relatively straightforward, and is based on a completion of square argument. The proof of necessity is more delicate and will be carried out by contradiction. We first show that if Problem~\ref{prob:infinitehorizon} admits a solution, then it {\em necessarily equals}  $\eta(u^*)$ with $u^* = -B^\dagger K x(t)$ the unique minimizer.  
This is done by introducing two auxiliary optimal control problems (see~\eqref{eq:minenergyprobunstable} and~\eqref{eq:defauxiloptim}) that will provide comparisons with Problem~\ref{prob:infinitehorizon}. 
Then, the optimal control yields  the closed-loop system  $\dot x(t) = (A-BB^\dagger K) x(t)$. 
But now,  recall that $\bV_{\hspace{-.04cm}\mathsf{a}}$ is spanned by the generalized right eigenvectors of $(A-BB^\dagger K)$ corresponding to eigenvalues with negative real parts---by Proposition~\ref{prop:theta}, this subspace equals $\C^n$ {\em unless} $A$ has imaginary eigenvalues. 
Thus, if $x(0) = x_0\not\in \bV_{\hspace{-.04cm}\mathsf{a}}$, then $x(t)$ will not converge to $0$, which implies that $u^*(t)$ is not admissible.

\subsection{Proof of sufficiency}\label{ssec:s2thmm2p}

In this subsection, we establish the following proposition:

\begin{proposition}\label{prop:completionsquare}
    Suppose that $x_0\in \bV_{\hspace{-.04cm}\mathsf{a}}$; then, Problem~\ref{prob:infinitehorizon} has the solution $\eta(u^*) = x_0^\dagger K x_0$, with $u^*(t) = - B^\dagger K x(t)$.
\end{proposition}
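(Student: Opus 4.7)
The approach is a completion-of-squares argument based on the Riccati equation~\eqref{eq:riccatiK}. The key identity will express $\|u(t)\|^2$ as a nonnegative ``tracking'' term $\|u(t)+B^\dagger Kx(t)\|^2$ plus an exact time derivative, so that after integration only boundary terms remain. The minimizer is then read off from the tracking term being zero.

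Concretely, fix any $u \in \cU(x_0)$ with resulting trajectory $x(t)$, and compute
\begin{align*}
\frac{\rrd}{\rrd t}\bigl(x(t)^\dagger K x(t)\bigr)
&= x(t)^\dagger(A^\dagger K + KA)x(t) + u(t)^\dagger B^\dagger K x(t) + x(t)^\dagger KB u(t) \\
&= x(t)^\dagger KBB^\dagger K x(t) + u(t)^\dagger B^\dagger K x(t) + x(t)^\dagger KB u(t),
\end{align*}
where the second equality uses the Riccati relation~\eqref{eq:riccatiK}. Expanding $\|u(t)+B^\dagger K x(t)\|^2$ and subtracting the previous display yields the pointwise identity
$$
\|u(t)\|^2 = \|u(t) + B^\dagger K x(t)\|^2 - \frac{\rrd}{\rrd t}\bigl(x(t)^\dagger K x(t)\bigr).
$$
Integrating over $[0,\infty)$ and using that $x(t)\to 0$ (by admissibility of $u$) gives
$$
\eta(u) = \int_0^\infty \|u(t) + B^\dagger K x(t)\|^2 \rrd t + x_0^\dagger K x_0 \geq x_0^\dagger K x_0,
$$
with equality if and only if $u(t) = -B^\dagger K x(t)$ almost everywhere.

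It remains to verify that the candidate $u^*(t) := -B^\dagger K x(t)$ is actually admissible when $x_0 \in \bV_{\hspace{-.04cm}\mathsf{a}}$; this is where we use the hypothesis. Under $u^*$, the closed-loop dynamics become $\dot x(t) = (A - BB^\dagger K) x(t)$. By definition, $\bV_{\hspace{-.04cm}\mathsf{a}}$ is the span of generalized right eigenvectors of $(A-BB^\dagger K)$ associated with eigenvalues of negative real part, hence it is an invariant subspace on which $(A-BB^\dagger K)$ is Hurwitz. Consequently, $x(t)$ decays exponentially to $0$, and $u^*(t) = -B^\dagger K x(t)$ likewise decays exponentially, so $u^*\in \mathrm{L}^2([0,\infty),\C^m)$, establishing $u^* \in \cU(x_0)$. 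Combined with the lower bound above, this shows $\eta(u^*)=x_0^\dagger K x_0$ is the minimum, attained uniquely at $u^*$.

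The only nonroutine point is justifying that the boundary term $\lim_{t\to\infty} x(t)^\dagger K x(t)$ vanishes for an arbitrary admissible $u$; this follows immediately from the admissibility requirement $x(t) \to 0$ and continuity of the quadratic form $v \mapsto v^\dagger Kv$. No appeal to Theorem~\ref{thm:main} is needed for this direction of the argument—only the algebraic Riccati identity~\eqref{eq:riccatiK} and the spectral description of $\bV_{\hspace{-.04cm}\mathsf{a}}$ from Proposition~\ref{prop:theta}.
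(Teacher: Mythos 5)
Your proof is correct and takes essentially the same completion-of-squares route as the paper: you derive the pointwise identity $\|u\|^2 = \|u + B^\dagger K x\|^2 - \tfrac{\rrd}{\rrd t}(x^\dagger K x)$ from the Riccati equation, integrate using admissibility to kill the boundary term at infinity, read off the unique minimizer, and then check that $u^*$ is admissible because $\bV_{\hspace{-.04cm}\mathsf{a}}$ is, by definition, the invariant subspace on which $(A-BB^\dagger K)$ acts Hurwitz. The only cosmetic difference is that you factor out the pointwise identity before integrating whereas the paper carries the manipulation inside the integral; the content is identical.
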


\begin{proof}
The proof  follows a classical completion of square argument. Let $u \in \cU(x_0)$ be an admissible control, and $x(t)$ be the solution of the linear system generated by this control input $u(t)$, with $x_0$ the initial condition. 
Then, using~\eqref{eq:riccatiK}, the cost incurred by $u(t)$ is 
\begin{align}
    \int_0^\infty \|u(t)\|^2 \rrd t & = \int_0^\infty  \left[\|u(t)\|^2 - x(t)^\dagger(A^\dagger K + K A - K BB^\dagger K)x(t)\right] \rrd t \notag \\
    & = \int_0^\infty \left [ \|u(t) + B^\dagger K x(t)\|^2 - x(t)^\dagger K (A x(t) + Bu(t))\right. \notag\\
    & \hspace{4.3cm} \left. - (A x(t) + B u(t))^\dagger K x(t) \right ] \rrd t\notag\\
    &=  \int_0^\infty \left [ \|u(t) + B^\dagger K x(t)\|^2 - x(t)^\dagger K \dot x(t) - \dot x(t)^\dagger K x(t) \right ] \rrd t\notag\\
    &=  \int_0^\infty \|u(t) + B^\dagger K x(t)\|^2 \rrd t - \int_0^\infty \frac{\rrd}{\rrd t} (x(t)^\dagger K x(t)) \rrd t\notag\\
    &= \label{eq:ineqopt} \int_0^\infty \|u(t) + B^\dagger K x(t)\|^2 \rrd t + x_0^\dagger K x_0 \geq x_0^\dagger K x_0,
\end{align}
where we used the fact that $\lim_{t \to \infty}x(t)=0$ for the last equality.
It should be clear from the above that the minimal value of  Problem~\ref{prob:infinitehorizon} is bounded below by $x_0^\dagger K x_0$. This lower bound can be reached only by the control law $u^*(t) = -B^\dagger K x(t)$, for all $t \geq 0$. 

By Proposition~\ref{prop:theta},  this feedback control law yields an {\em exponentially} stable closed loop system $\dot x(t) = (A - BB^\dagger K) x(t)$ when restricted to the invariant subspace $ \bV_{\hspace{-.04cm}\mathsf{a}}$. 
Thus, by the hypothesis on $x_0$, we have that the solution $x^*(t)$ of~\eqref{eq:linsys} generated by $u^*(t)$ decays to zero exponentially fast, which implies that $u^*(t) \in \cU(x_0)$. 
This completes the proof of the sufficiency of Theorem~\ref{thm:main2p}. 
\end{proof}

The arguments in the above proof also lead to the following fact:

\begin{corollary}\label{cor:noimagA}
    Suppose that $A$ does not have any  imaginary eigenvalue and that $(A, B)$ is stabilizable; then, there is a unique Hermitian solution $K$ to the Riccati equation~\eqref{eq:riccatiK} such that $(A - BB^\dagger K)$ is Hurwitz, and it is given by~\eqref{eq:limitK}.
\end{corollary}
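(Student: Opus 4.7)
The plan is to split the statement cleanly into existence and uniqueness. For existence, I would argue that the matrix $K$ defined in~\eqref{eq:limitK} already has all the required properties and no new construction is needed. It is Hermitian because $W_{\hspace{-.04cm}\mathsf{u}}$ is positive definite; it satisfies~\eqref{eq:riccatiK} as noted immediately after its definition; and because $A$ has no imaginary eigenvalue we have $n_{\mathsf{o}} = 0$, so Proposition~\ref{prop:theta} gives $\spec(A - BB^\dagger K) = \theta(\spec(A))$, and every element of this set has strictly negative real part since $\theta(a+\mathrm{i}b) = -|a| + \mathrm{i}b$ has negative real part whenever $a \neq 0$. Hence $A - BB^\dagger K$ is Hurwitz.

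For uniqueness, let $K'$ be any Hermitian solution of~\eqref{eq:riccatiK} such that $A - BB^\dagger K'$ is Hurwitz. The key observation is that the completion-of-squares computation used in the proof of Proposition~\ref{prop:completionsquare} only relied on Hermiticity of the matrix and on its being a solution to the Riccati equation, so it applies verbatim to $K'$: for every $x_0 \in \C^n$ and every $u \in \cU(x_0)$,
\begin{equation*}
\int_0^\infty \|u(t)\|^2 \rrd t = \int_0^\infty \|u(t) + B^\dagger K' x(t)\|^2 \rrd t + x_0^\dagger K' x_0,
\end{equation*}
where the boundary term at infinity vanishes because admissibility forces $x(t) \to 0$. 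Since $A - BB^\dagger K'$ is Hurwitz, the feedback $u'(t) := -B^\dagger K' x(t)$ is itself admissible for every $x_0$, so the minimum of $\eta$ over $\cU(x_0)$ equals $x_0^\dagger K' x_0$ and is attained uniquely by $u'$.

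Applying the same reasoning to the $K$ from~\eqref{eq:limitK} (admissible for every $x_0$ since $\bV_{\hspace{-.04cm}\mathsf{a}} = \C^n$ in the absence of imaginary eigenvalues) shows that the minimum equals $x_0^\dagger K x_0$ as well. Because a well-posed minimum is a single number, $x_0^\dagger K x_0 = x_0^\dagger K' x_0$ for every $x_0 \in \C^n$, and polarization together with the Hermiticity of both matrices forces $K = K'$. I do not foresee any substantial obstacle; the only point requiring a hint of care is verifying that the infinite boundary term truly vanishes when the completion of squares is performed with an arbitrary Hermitian Riccati solution, which is automatic since $K'$ is a fixed bounded matrix and $x(t) \to 0$ along every admissible trajectory.
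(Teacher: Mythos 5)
Your proposal is correct and follows essentially the same route as the paper: existence is read off from Proposition~\ref{prop:theta} and the remarks following~\eqref{eq:limitK}, and uniqueness is deduced by running the completion-of-squares identity~\eqref{eq:ineqopt} with both $K$ and an arbitrary Hermitian stabilizing solution $K'$, forcing $x_0^\dagger K x_0 = x_0^\dagger K' x_0$ for all $x_0$ and hence $K=K'$. The only difference is cosmetic: you spell out the existence verification and the polarization step, which the paper leaves implicit.
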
 

\begin{proof}
    First, note that if $A$ has no  imaginary eigenvalue, then by Proposition~\ref{prop:theta}, 
    $\bV_{\hspace{-.04cm}\mathsf{a}} =\C^n$. 
    Assume that there exists another Hermitian solution $K'$ to~\eqref{eq:riccatiK} such that $(A - BB^\dagger K')$ is Hurwitz. Let $u'(t):= -B^\dagger K' x'(t)$, where $x'(t)$ is the solution to $$\dot x'(t) = Ax'(t)+Bu'(t) = (A - BB^\dagger K')x'(t), \quad \mbox{with } x'(0) = x_0.$$ 
    Then, by the same arguments in the proof of Proposition~\ref{prop:completionsquare}, we have that $u'(t)$ minimizes the cost $\eta(u)$ and the minimal cost is given by 
    $\eta(u') = x_0^\dagger K' x_0$. 
    But then, from~\eqref{eq:ineqopt}, we have $x_0^\dagger K' x_0 = x_0^\dagger K x_0$ which holds for all $x_0\in \C^n$. We  thus conclude that $K' = K$.
\end{proof}

\subsection{Proof  of  necessity}\label{ssec:necessityTH}
We show here that if $x_0\notin \bV_{\hspace{-.04cm}\mathsf{a}}$, then Problem~\ref{prob:infinitehorizon} does not admit a solution. 
Given $x(t)\in \C^n$, we let $$y(t) := P^{-1} x(t) = \begin{bmatrix} y_{\mathsf{u}}(t) \\ y_{\mathsf{s}}(t) \end{bmatrix},$$ 
where $y_{\mathsf{u}}(t)\in \C^{n_{\mathsf{u}}}$ and $y_{\mathsf{s}}(t) \in \C^{n_{\mathsf{s}}}$. The dynamics of $y(t)$ is then given by
\begin{equation}\label{eq:tildexdynamics}
\dot {y}(t) = J y(t) + C u(t).
\end{equation}
Consider the following optimal control problem for~\eqref{eq:tildexdynamics}: 
\begin{equation}\label{eq:minenergyprobunstable}
 \min_{u \in \mathrm{L}^2([0,\infty),\C^m)
    } \eta(u)=\int_0^\infty \|u(t)\|^2 \rrd t \quad \mbox{s.t. }  y(0) = P^{-1} x_0 \quad \mbox{and} \quad \lim_{t\to\infty} y_\mathsf{u}(t)= 0,
\end{equation}
which differs from~\eqref{eq:minenergyprob}  in that we require that  {\em only} $y_{\mathsf{u}}(t)$  be asymptotically zero. 
Observe that $u^*(t)$, defined in Theorem~\ref{th:main0}, can  be written as a feedback control in $y(t)$ as
$$
u^*(t) = - B^\dagger K x(t) = - C^\dagger \widetilde K y(t), \quad \mbox{for } t \geq 0,
$$
where $\widetilde K$ is defined in~\eqref{eq:deftildeK}. We have the following result:

\begin{lemma}\label{lem:tildeJK}
The following two items hold:
\begin{enumerate}
    \item The solution to the optimal control problem~\eqref{eq:minenergyprobunstable} is given by 
\begin{equation}\label{eq:minimalcostfortildeJ}
\eta(u^*) = x_0^\dagger K x_0. 
\end{equation}
Moreover, $u^*$ is the unique control law that minimizes the cost.  
\item For any  $u\in \cU(x_0)$, it holds that
$\eta(u) \geq  \eta(u^*)$.
\end{enumerate}
\end{lemma}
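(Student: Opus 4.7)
The plan is to dispatch item (2) by a set-inclusion observation and to prove item (1) by adapting the completion-of-squares argument of Proposition~\ref{prop:completionsquare} to the relaxed endpoint constraint of~\eqref{eq:minenergyprobunstable}. For (2), observe that if $u\in\cU(x_0)$, then $x(t)\to 0$ forces $y_{\mathsf{u}}(t)\to 0$, so $u$ is feasible for~\eqref{eq:minenergyprobunstable}; once (1) identifies $u^*$ as the minimizer of that (strictly larger) problem, the inequality $\eta(u)\geq \eta(u^*)$ is automatic.

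For (1), the crucial observation is that, by~\eqref{eq:limitK} and~\eqref{eq:deftildeK},
\begin{equation*}
x(t)^\dagger K x(t) \;=\; y(t)^\dagger \widetilde K\, y(t) \;=\; y_{\mathsf{u}}(t)^\dagger W_{\hspace{-.04cm}\mathsf{u}}^{-1} y_{\mathsf{u}}(t),
\end{equation*}
because the lower-right block of $\widetilde K$ vanishes. Hence the boundary quantity arising in the completion-of-squares argument depends \emph{only} on $y_{\mathsf{u}}$, and under the weaker admissibility condition $y_{\mathsf{u}}(t)\to 0$ imposed by~\eqref{eq:minenergyprobunstable} we still recover $\lim_{t\to\infty} x(t)^\dagger K x(t)=0$. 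Since $K$ satisfies the Riccati equation~\eqref{eq:riccatiK}, repeating the chain of identities in~\eqref{eq:ineqopt} verbatim yields, for every feasible $u$ of~\eqref{eq:minenergyprobunstable},
\begin{equation*}
\eta(u) \;=\; \int_0^\infty \|u(t)+B^\dagger K x(t)\|^2\,\rrd t \;+\; x_0^\dagger K x_0 \;\geq\; x_0^\dagger K x_0,
\end{equation*}
with equality (almost everywhere) precisely when $u(t)=-B^\dagger K x(t)=-C^\dagger\widetilde K y(t)$, which settles both optimality and uniqueness.

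It remains to verify that this candidate $u^*$ is itself feasible for~\eqref{eq:minenergyprobunstable}, i.e.\ lies in $\mathrm{L}^2$ and produces $y_{\mathsf{u}}(t)\to 0$. Substituting $u^*$ into~\eqref{eq:tildexdynamics} yields the closed-loop matrix computed in the proof of Proposition~\ref{prop:theta},
\begin{equation*}
J - CC^\dagger \widetilde K \;=\; \begin{bmatrix} J_{\mathsf{u}} - C_{\mathsf{u}} C_{\mathsf{u}}^\dagger W_{\hspace{-.04cm}\mathsf{u}}^{-1} & 0 \\ -C_{\mathsf{s}} C_{\mathsf{u}}^\dagger W_{\hspace{-.04cm}\mathsf{u}}^{-1} & J_{\mathsf{s}} \end{bmatrix},
\end{equation*}
whose block-triangular form decouples $\dot y_{\mathsf{u}} = (J_{\mathsf{u}} - C_{\mathsf{u}} C_{\mathsf{u}}^\dagger W_{\hspace{-.04cm}\mathsf{u}}^{-1})y_{\mathsf{u}}$. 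By Proposition~\ref{prop:theta}, the spectrum of this block equals $\theta(\spec(J_{\mathsf{u}}))$ and therefore lies strictly in the open left half-plane, so $y_{\mathsf{u}}(t)$ decays exponentially. Because $\widetilde K$ annihilates $y_{\mathsf{s}}$, we have $u^*(t) = -C_{\mathsf{u}}^\dagger W_{\hspace{-.04cm}\mathsf{u}}^{-1} y_{\mathsf{u}}(t)$, which inherits the exponential decay and hence belongs to $\mathrm{L}^2$.

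The main subtlety, rather than a genuine obstacle, is that under $u^*$ the stable block $y_{\mathsf{s}}(t)$ need \emph{not} converge to zero and, when $J_{\mathsf{o}}$ carries nontrivial Jordan structure, may grow polynomially even though it is driven by the exponentially decaying signal $-C_{\mathsf{s}} C_{\mathsf{u}}^\dagger W_{\hspace{-.04cm}\mathsf{u}}^{-1} y_{\mathsf{u}}(t)$. This is precisely why $u^*$ does not automatically solve the stronger Problem~\ref{prob:infinitehorizon}, and is the pressure point that the subsequent necessity argument in Subsection~\ref{ssec:necessityTH} will exploit; for the present lemma it is harmless because the identity $x^\dagger K x = y_{\mathsf{u}}^\dagger W_{\hspace{-.04cm}\mathsf{u}}^{-1} y_{\mathsf{u}}$ makes both the boundary term and the feedback law $u^*$ entirely blind to $y_{\mathsf{s}}$.
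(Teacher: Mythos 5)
Your proof is correct and, once unpacked, is essentially the same argument as the paper's. The paper packages the key observation (that only $y_{\mathsf{u}}$ matters) as a reduction of~\eqref{eq:minenergyprobunstable} to the equivalent minimum-energy problem on the decoupled subsystem $(J_{\mathsf{u}}, C_{\mathsf{u}})$, and then invokes the already-proved Proposition~\ref{prop:completionsquare} for that subsystem (whose $\bV_{\hspace{-.04cm}\mathsf{a}}$ is the full space since $J_{\mathsf{u}}$ has no imaginary eigenvalues); you instead run the completion-of-squares computation directly on the full system under the relaxed terminal condition, using the identity $x^\dagger K x = y_{\mathsf{u}}^\dagger W_{\hspace{-.04cm}\mathsf{u}}^{-1} y_{\mathsf{u}}$ to show the boundary term still vanishes, and then check feasibility of $u^*$ via the block-triangular closed-loop matrix and Proposition~\ref{prop:theta} exactly as the paper does implicitly. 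The two routes collapse to the same identity $\eta(u) = \int_0^\infty \|u + C_{\mathsf{u}}^\dagger W_{\hspace{-.04cm}\mathsf{u}}^{-1} y_{\mathsf{u}}\|^2\,\rrd t + y_{0,\mathsf{u}}^\dagger W_{\hspace{-.04cm}\mathsf{u}}^{-1} y_{0,\mathsf{u}}$, and your handling of item~(2) by feasibility inclusion matches the paper's.
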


\begin{proof}
    Let $y_0 = P^{-1} x_0$, and we partition $y_0 = (y_{0,\mathsf{u}}, y_{0,\mathsf{s}})$ with $y_{0,\mathsf{u}}\in \C^{n_{\mathsf{u}}}$. 
    Since $J$ is block diagonal, the dynamics of $y_{\mathsf{u}}(t)$ do not depend on $y_{\mathsf{s}}(t)$, i.e., 
    \begin{equation}\label{eq:y1indep}
    \dot y_{\mathsf{u}}(t)=J_{\mathsf{u}}y_{\mathsf{u}}(t)+C_{\mathsf{u}}u(t).
    \end{equation}
    The optimization problem~\eqref{eq:minenergyprobunstable} is thus in fact independent from $y_{\mathsf{s}}(t)$ and can be reduced  to the simpler (yet equivalent) problem:
   \begin{equation}\label{eq:minenergyreduced}
 \min_{u \in \mathrm{L}^2([0,\infty), \C^m)
    } \eta(u)=\int_0^\infty \|u(t)\|^2 \rrd t \quad \mbox{s.t. }  y_{\mathsf{u}}(0) = y_{0,\mathsf{u}} \quad \mbox{and} \quad \lim_{t\to\infty} y_{\mathsf{u}}(t)= 0,
\end{equation} 
for the subsystem~\eqref{eq:y1indep}.

Since all the eigenvalues of $J_{\mathsf{u}}$ have positive real parts, 
by the sufficiency of  Theorem~\ref{thm:main2p} proved in Subsection~\ref{ssec:s2thmm2p}, we know that the above problem~\eqref{eq:minenergyreduced} has a solution, which is given by
$$
y_{0,\mathsf{u}}^\dagger W_{J_{\mathsf{u}}, C_{\mathsf{u}}}^{-1}(\infty) y_{0,\mathsf{u}} = y_{0,\mathsf{u}}^\dagger W_{\hspace{-.04cm}\mathsf{u}}^{-1} y_{0,\mathsf{u}} = y_0^\dagger \widetilde K y_0= x_0^\dagger K x_0,
$$
where the first equality follows from the definition of $W_{\hspace{-.04cm}\mathsf{u}}$ (see~\eqref{eq:defW1}) and the second equality follows directly from~\eqref{eq:deftildeK} and the construction of $y_{0,\mathsf{u}}$. 
Moreover, the unique control law that minimizes the cost is $- C_{\mathsf{u}}^\dagger W_{\hspace{-.04cm}\mathsf{u}}^{-1} y_{\mathsf{u}}(t)$, which coincides with $u^*(t)$; indeed,  we have that $$u^*(t) = - C^\dagger \widetilde K y(t) = - C_{\mathsf{u}}^\dagger W_{\hspace{-.04cm}\mathsf{u}}^{-1} y_{\mathsf{u}}(t).
$$ 
This proves the first item of the lemma.

To establish the second item, we note that any admissible $u$ that drives $x(t)$ asymptotically to $0$ also drives $y_{\mathsf{u}}(t)$ to $0$. Thus, 
$\eta(u) \geq \eta(u^*)$.
\end{proof}

We also have the following result:

\begin{lemma}\label{lem:upperboundforJ}
If Problem~\ref{prob:infinitehorizon} has a solution $\eta(\widetilde u^*)$ for some $\widetilde u^*\in \cU(x_0)$, then $\eta(\widetilde u^*) \leq x_0^\dagger K x_0$. 
\end{lemma}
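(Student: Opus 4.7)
The plan is to construct a sequence $\{u_k\}_{k\geq 1}\subset \cU(x_0)$ of admissible controls with $\eta(u_k)\to x_0^\dagger K x_0$. Since $\widetilde u^*$ is optimal, $\eta(\widetilde u^*)\leq \eta(u_k)$ for every $k$, and passing to the limit then gives the desired bound.

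To build each $u_k$, I would work in the Jordan coordinates $y=P^{-1}x$ and focus on the subsystem formed by the unstable and center components. Write $J_{\mathrm{uo}}:=\diag(J_{\mathsf{u}},J_{\mathsf{o}})\in \C^{(n_{\mathsf{u}}+n_{\mathsf{o}})\times(n_{\mathsf{u}}+n_{\mathsf{o}})}$, $C_{\mathrm{uo}}:=[C_{\mathsf{u}};C_{\mathsf{o}}]$, and let $y_{0,\mathrm{uo}}$ denote the first $n_{\mathsf{u}}+n_{\mathsf{o}}$ components of $P^{-1}x_0$. The pair $(J_{\mathrm{uo}},C_{\mathrm{uo}})$ is controllable: by the PBH test, stabilizability of $(A,B)$ is equivalent to $[\lambda I-J,\,C]$ having full row rank at every $\lambda$ with $\mathrm{Re}(\lambda)\geq 0$; since $J$ is block-diagonal and $\lambda I-J_{\mathsf{a}}$ is invertible at any such $\lambda$ (the eigenvalues of $J_{\mathsf{a}}$ have negative real parts), this condition reduces exactly to the PBH test for $(J_{\mathrm{uo}},C_{\mathrm{uo}})$ at $\lambda$, as needed.

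For each horizon $T_k\to\infty$, I would take $u_k$ on $[0,T_k]$ to be the finite-horizon minimum-energy control for $(J_{\mathrm{uo}},C_{\mathrm{uo}})$ that steers $y_{0,\mathrm{uo}}$ to $0$ at time $T_k$, and set $u_k\equiv 0$ on $[T_k,\infty)$. By~\eqref{eq:optimfinTu}--\eqref{eq:mincost}, this control exists and satisfies
\begin{equation*}
\eta(u_k)=y_{0,\mathrm{uo}}^{\,\dagger}\,W_{J_{\mathrm{uo}},C_{\mathrm{uo}}}(T_k)^{-1}\,y_{0,\mathrm{uo}}.
\end{equation*}
Admissibility is straightforward: on $[T_k,\infty)$ the control vanishes and $y_{\mathsf{u}}(T_k)=y_{\mathsf{o}}(T_k)=0$, so $y_{\mathsf{u}}(t)\equiv y_{\mathsf{o}}(t)\equiv 0$ on that interval, while $y_{\mathsf{a}}$ evolves freely under the Hurwitz matrix $J_{\mathsf{a}}$ and decays to zero. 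Thus $x(t)=Py(t)\to 0$ and $u_k\in\cU(x_0)$.

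To conclude, I would invoke Theorem~\ref{thm:main} for the controllable pair $(J_{\mathrm{uo}},C_{\mathrm{uo}})$. Relative to its own positive/non-positive real-part split, the unstable block is $J_{\mathsf{u}}$ and its infinite-horizon Gramian is $W_{\hspace{-.04cm}\mathsf{u}}$, so the limit matrix produced by~\eqref{eq:limitK} is precisely $\diag(W_{\hspace{-.04cm}\mathsf{u}}^{-1},0)$. Therefore
\begin{equation*}
\eta(u_k)\xrightarrow{T_k\to\infty} y_{0,\mathrm{uo}}^{\,\dagger}\,\diag(W_{\hspace{-.04cm}\mathsf{u}}^{-1},0)\,y_{0,\mathrm{uo}}=y_{0,\mathsf{u}}^{\,\dagger}\,W_{\hspace{-.04cm}\mathsf{u}}^{-1}\,y_{0,\mathsf{u}}=x_0^\dagger K x_0,
\end{equation*}
where $y_{0,\mathsf{u}}$ denotes the first $n_{\mathsf{u}}$ components of $P^{-1}x_0$ and the last equality follows from~\eqref{eq:deftildeK}. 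Combined with $\eta(\widetilde u^*)\leq\eta(u_k)$, this completes the proof. The only nontrivial ingredient is Theorem~\ref{thm:main}; everything else amounts to bookkeeping, and I do not anticipate a genuine obstacle.
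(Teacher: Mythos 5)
Your proposal is correct and follows essentially the same route as the paper's own proof: in both cases one splits $J$ into the block with non-negative real-part eigenvalues (your $J_{\mathrm{uo}}$, the paper's $J_1$) and the Hurwitz block $J_{\mathsf{a}}$, takes the finite-horizon minimum-energy control for the first subsystem steering it to zero at time $T$, extends it by zero, verifies admissibility via decay of the $y_{\mathsf{a}}$ component, and passes $T\to\infty$ using Theorem~\ref{thm:main} applied to the controllable pair $(J_1,C_1)$. The only cosmetic addition in your write-up is the explicit PBH argument for controllability of $(J_{\mathrm{uo}},C_{\mathrm{uo}})$, which the paper simply asserts.
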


\begin{proof}

We again let $y(t):= P^{-1}x(t)$ and decompose $J = \diag(J_1, J_{\mathsf{a}})$ where   
$$ 
J_1 := 
\begin{bmatrix}
J_{\mathsf{u}}& 0 \\
0 & J_{\mathsf{o}}
\end{bmatrix},
$$
and where we recall that $J_{\mathsf{u}} \in \C^{n_{\mathsf{u}} \times n_{\mathsf{u}}}$ and $J_{\mathsf{o}} \in \C^{n_{\mathsf{o}} \times n_{\mathsf{o}}}$ have eigenvalues with positive and zero real parts, respectively. 
Correspondingly, we decompose $C$ and $y(t)$ as
$$
C = 
\begin{bmatrix}
C_1\\
C_{\mathsf{a}}
\end{bmatrix} \quad \mbox{and} \quad
y(t) = 
\begin{bmatrix}
y_1(t) \\
y_{\mathsf{a}}(t)
\end{bmatrix},
$$
where $C_1\in \C^{(n_{\mathsf{u}}+n_{\mathsf{o}})\times m}$  
and $y_1(t) \in \C^{n_{\mathsf{u}}+n_{\mathsf{o}}}$. Since $(J, C)$ is stabilizable and since all the eigenvalues of $J_{1}$ have non-negative real parts, $(J_{1}, C_{1})$ is controllable. 

We again let $y_0 := P^{-1} x_0$ and decompose $y_0= (y_{0,1}, y_{0,\mathsf{a}})$.   
Now, consider the auxiliary {\em finite-horizon} optimal control problem for the system $\dot y_1(t) = J_{1} y_{1}(t)+C_{1} u(t)$,  
\begin{equation}\label{eq:defauxiloptim}
    \min_{u \in \rL^2([0,T],\C^m)} \eta_T(u)= \int_0^T \|u\|^2 \rrd t \quad \mbox{s.t. } y_1(0) = y_{0,1} \mbox{ and } y_{1}(T) = 0.
\end{equation}
By~\eqref{eq:mincost}, the optimal solution to~\eqref{eq:defauxiloptim} is given by:
\begin{equation*}
\eta_T(u_T) = y^\dagger_{0,1} W_{J_{1}, C_{1}}(T)^{-1} y_{0,1},
\end{equation*}
where the optimal control $u_T$ is
\begin{equation}\label{eq:etatut}
u_T(t) = - C^\dagger_{1} e^{ -J^\dagger_{1} t} W_{J_{1}, C_{1}}(T)^{-1} y_{0,1} , \quad \mbox{for } 0 \leq t \leq T.
\end{equation}
With some abuse of notation, we extend the domain of $u_T$ to $[0,\infty)$ by setting $u_T(t) := 0$ for all  $t > T$.  It is clear that $u_T\in \rL^2([0,\infty),\C^m)$ for all $T > 0$.

We claim that $u_T \in \cU(x_0)$, i.e., the solution $x(t)$ of system~\eqref{eq:linsys} generated by $u_T(t)$ converges to $0$ as $t\to\infty$.  
Because $x(t)$ and $y(t)$ are related by similarity transformation, it suffices to show that the solution $y(t)$ driven by $u_T(t)$ converges to $0$.  Recall that the dynamics of $y(t)$ obey~\eqref{eq:tildexdynamics}. In particular, the dynamics $y_{1}(t)$ and $y_{\mathsf{a}}(t)$ are decoupled if $u(t) = 0$. 
By construction of $u_T(t)$, the solution $y(t)$ at time $T$ is
$$
y(T) = \begin{bmatrix}
    y_{1}(T)\\
    y_{\mathsf{a}}(T)
\end{bmatrix} = 
\begin{bmatrix}
0 \\
y_{\mathsf{a}}(T)
\end{bmatrix}.
$$
where $y_{\mathsf{a}}(T) = e^{J_{\mathsf{a}}T}y_{0,\mathsf{a}} + \int_0^T e^{J_{\mathsf{a}}(T - t)} C_{\mathsf{a}} u_T(t) \rrd t$. 
Then, for $t \geq T$, we have 
$$
y(t) = \begin{bmatrix}
    0\\
    e^{J_{\mathsf{a}}(t-T)} y_{\mathsf{a}}(T)
\end{bmatrix}.
$$
The claim now follows  from the fact that $J_{\mathsf{a}}$ is Hurwitz by definition, which ensures that $\lim_{t \to \infty} y_{\mathsf{a}}(t) = 0$.

By the hypothesis that $\widetilde u^*$ solves Problem~\ref{prob:infinitehorizon}, we have that 
$\eta(\widetilde u^*)\leq \eta(u_T) =  y^\dagger_{0,1} W_{J_{1}, C_{1}}(T)^{-1} y_{0,1}$ for all $T > 0$. We conclude that
$$
\eta(\widetilde u^*) \leq \lim_{T\to\infty} y^\dagger_{0,1} W_{J_{1}, C_{1}}(T)^{-1} y_{0,1} = y^\dagger_{0,1} 
\begin{bmatrix}
W_{\hspace{-.04cm}\mathsf{u}}^{-1} & 0 \\
0 & 0
\end{bmatrix}
y_{0,1} = y_0^\dagger \widetilde K y_0 = x_0^\dagger K x_0,
$$
where the first equality follows from Theorem~\ref{thm:main} (with the $(A, B)$ pair replaced with the $(J_{1}, C_{1})$ pair), the second equality follows from the definition of $\widetilde K$ in~\eqref{eq:deftildeK}, and the last equality follows from the definition of $K$ in~\eqref{eq:limitK} and the relation $y_0 = P^{-1} x_0$.
\end{proof}

With the two previous lemmas, we now prove the following proposition.

\begin{proposition}
If $x_0\notin  \bV_{\hspace{-.04cm}\mathsf{a}}$,  then there is no solution to Problem~\ref{prob:infinitehorizon}.  
\end{proposition}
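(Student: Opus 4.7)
The plan is to argue by contradiction and reduce the non-existence to the characterization of the stable subspace of the closed-loop matrix $A - BB^\dagger K$ provided by Proposition~\ref{prop:theta}. Assume toward a contradiction that there exists $\widetilde{u}^*\in\cU(x_0)$ that solves Problem~\ref{prob:infinitehorizon}. I will first pin down the value $\eta(\widetilde{u}^*)$ exactly, and then use the completion-of-square identity from Proposition~\ref{prop:completionsquare} to show that $\widetilde{u}^*$ must be the feedback $-B^\dagger K x(t)$, which will give the contradiction.

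First, I combine the two lemmas already proved to sandwich the optimal cost. By item~(2) of Lemma~\ref{lem:tildeJK}, every $u\in\cU(x_0)$ satisfies $\eta(u)\geq x_0^\dagger K x_0$; in particular $\eta(\widetilde{u}^*)\geq x_0^\dagger K x_0$. By Lemma~\ref{lem:upperboundforJ}, the existence of a minimizer $\widetilde{u}^*$ forces $\eta(\widetilde{u}^*)\leq x_0^\dagger K x_0$. Hence $\eta(\widetilde{u}^*)= x_0^\dagger K x_0$.

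Next, I apply the completion-of-square calculation of Proposition~\ref{prop:completionsquare} to $\widetilde{u}^*$. Since $\widetilde{u}^*\in\cU(x_0)$, the trajectory $x(t)$ that it generates satisfies $\lim_{t\to\infty}x(t)=0$, so the boundary term in the identity vanishes and one obtains
\begin{equation*}
\eta(\widetilde{u}^*) \;=\; x_0^\dagger K x_0 \;+\; \int_0^\infty \|\widetilde{u}^*(t) + B^\dagger K x(t)\|^2\,\rrd t.
\end{equation*}
Combined with $\eta(\widetilde{u}^*)= x_0^\dagger K x_0$, this forces $\widetilde{u}^*(t) = -B^\dagger K x(t)$ for almost every $t\geq 0$. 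Thus the closed-loop trajectory satisfies $\dot x(t)=(A-BB^\dagger K)x(t)$ with $x(0)=x_0$, while still obeying $\lim_{t\to\infty}x(t)=0$.

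Finally, by Proposition~\ref{prop:theta}, the spectrum of $A-BB^\dagger K$ equals $\theta(\spec(A))$, so it consists only of eigenvalues with non-positive real parts; its zero-real-part generalized eigenspace is exactly $\bV_{\hspace{-.04cm}\mathsf{o}}$ and its strictly stable generalized eigenspace is $\bV_{\hspace{-.04cm}\mathsf{a}}$, with $\C^n=\bV_{\hspace{-.04cm}\mathsf{a}}\oplus\bV_{\hspace{-.04cm}\mathsf{o}}$. Writing $x_0 = x_{0,\mathsf{a}} + x_{0,\mathsf{o}}$ in this decomposition, the trajectory of $e^{(A-BB^\dagger K)t}x_0$ converges to $0$ iff the $\bV_{\hspace{-.04cm}\mathsf{o}}$-component $e^{(A-BB^\dagger K)t}x_{0,\mathsf{o}}$ does, which for an imaginary-axis (semisimple or Jordan) block only happens when $x_{0,\mathsf{o}}=0$. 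Therefore $x_0\in\bV_{\hspace{-.04cm}\mathsf{a}}$, contradicting the assumption $x_0\notin\bV_{\hspace{-.04cm}\mathsf{a}}$. The main subtlety is the middle step, namely extracting from the mere equality of optimal values the pointwise identification $\widetilde{u}^*(t)=-B^\dagger K x(t)$; the completion-of-square identity handles this cleanly because the extra integrand $\|\widetilde{u}^*+B^\dagger K x\|^2$ is nonnegative and thus must vanish a.e.
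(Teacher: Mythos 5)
Your proof is correct, and it follows essentially the same outline as the paper's: sandwich the optimal value $\eta(\widetilde u^*)$ between $x_0^\dagger K x_0$ via item~(2) of Lemma~\ref{lem:tildeJK} (lower bound) and Lemma~\ref{lem:upperboundforJ} (upper bound), identify $\widetilde u^*$ as the feedback $-B^\dagger K x(t)$, and conclude that the closed-loop trajectory cannot converge to zero when $x_0\notin\bV_{\hspace{-.04cm}\mathsf{a}}$. The one place you genuinely diverge is in the identification step: the paper invokes item~(1) of Lemma~\ref{lem:tildeJK} (uniqueness of the minimizer of the relaxed problem~\eqref{eq:minenergyprobunstable}) to conclude $\widetilde u^*=u^*$, whereas you reuse the completion-of-square identity from Proposition~\ref{prop:completionsquare} directly on $\widetilde u^*$ and observe that the nonnegative residual $\int_0^\infty\|\widetilde u^*+B^\dagger Kx\|^2\,\rrd t$ must vanish. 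Both are valid; your route is a bit more self-contained (it bypasses the uniqueness statement of the auxiliary problem) but relies on noticing that the completion-of-square identity holds for any admissible $u$, not just those with $x_0\in\bV_{\hspace{-.04cm}\mathsf{a}}$ — which is indeed the case, since the only hypothesis used there is $\lim_{t\to\infty}x(t)=0$. Your final paragraph, showing the trajectory under the closed-loop matrix cannot decay when $x_0$ has a nonzero component in $\bV_{\hspace{-.04cm}\mathsf{o}}$, is also correct and slightly more explicit than the paper's one-line appeal to the same fact.
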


\begin{proof}
The proof will be carried out by contradiction. Suppose that  Problem~\ref{prob:infinitehorizon} has a solution, which we denote by $\eta(\widetilde u^*)$;  
clearly, the control law $\widetilde u^*$ drives $y_{\mathsf{u}}(t)$ (see~\eqref{eq:y1indep}) to~$0$ and meets the requirements of  optimal control problem~\eqref{eq:minenergyprobunstable}.

Let $u^*$ be the unique minimizer of problem~\eqref{eq:minenergyprobunstable} as stated in item~1 of Lemma~\ref{lem:tildeJK}.  Because $\widetilde u^* \in \cU(x_0)$, the second item of this lemma yields $\eta(\widetilde u^*) \geq \eta(u^*) = x_0^\dagger K x_0$. 
From  Lemma~\ref{lem:upperboundforJ}, we have the reverse inequality $\eta(\widetilde u^*) \leq  x_0^\dagger K x_0$. 
It thus follows that $\eta(\widetilde u^*) = x_0^\dagger K x_0$. Since the optimal control law that minimizes the cost for the problem~\eqref{eq:minenergyprobunstable} is unique by item~1 of Lemma~\ref{lem:tildeJK}, 
we have that 
$$u^*(t) = \widetilde u^*(t) = - C^\dagger \widetilde K y(t) = - B^\dagger K x(t).$$ 
Thus, using the optimal control law $\widetilde u^*$, we have the closed loop system
$\dot x(t) = (A - BB^\dagger K) x(t)$. However, by the hypothesis that $x_0\notin \bV_\mathsf{a}$, the solution $x(t)$ does not converge to $0$ as $t\to\infty$, which implies that $\widetilde u^*$ is not admissible. 
\end{proof}

This completes the proof of Theorem~\ref{thm:main2p}. \hfill{$\qed$}

\section{The case of imaginary eigenvalues}\label{sec:pfimaginary}

We state and prove in this subsection a core result of our proof for Theorem~\ref{thm:main}.  Namely, we establish the divergence rate of the controllability Gramian $W(T)$ as $T \to \infty$, of a controllable system whose state matrix has only {\em imaginary} eigenvalues.

Consider the system $(J,C)$, where $J$ is a matrix with only imaginary eigenvalues and in the Jordan normal form (i.e., with the convention of the previous sections, $J_{\mathsf{o}}$ is now $J$). 
To proceed, we decompose $J$ into its Jordan blocks, which we denote by $M_i \in \C^{d_i\times d_i}$, for $i=1,\ldots,k$. Without loss of generality, we assume that blocks with the same eigenvalue are contiguous in the ordering. We associate to each block $M_i$ the matrices $C_i$ and $D_i$ obtained as follows:
we  decompose  $C$ and write
\begin{equation}\label{eq:defL}
J = 
\begin{bmatrix}
M_1 & &  \\
& \ddots & \\
& & M_k
\end{bmatrix} \quad \mbox{and}\quad C  = 
\begin{bmatrix}
C_1 \\
\vdots \\
C_k
\end{bmatrix},
\end{equation}
where $C_i \in \C^{d_i \times m}$. 
We then introduce the diagonal matrix 
\begin{equation}\label{eq:defbigD}
D(T) := 
\begin{bmatrix}
D_1(T) & & \\
& \ddots & \\
& & D_k(T)
\end{bmatrix}, \quad \mbox{with } D_i(T):= 
\begin{bmatrix}
T^{d_i - 1} & & & \\
& \ddots & & \\
& & T & \\
& & & 1
\end{bmatrix}\in \C^{d_i\times d_i}.
\end{equation}
We now state the main result of this section:

\begin{theorem}\label{thm:pimlimit}
Let $(J,C)$ be a controllable pair, where $J$ is in  Jordan normal form and with only imaginary eigenvalues. 
Then, there exists a nonsingular matrix $S$ such that  
$$
\lim_{T\to \infty} \frac{1}{T} D(T)^{-1} W_{J,C}(T) D(T)^{-1} = S.$$ 
\end{theorem}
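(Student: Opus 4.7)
The plan is to analyze the limit by rescaling time. First I substitute $t = Ts$ in the defining integral $W_{J,C}(T) = \int_0^T e^{-Jt}CC^\dagger e^{-J^\dagger t}\rrd t$ to rewrite
$$\frac{1}{T}D(T)^{-1}W_{J,C}(T)D(T)^{-1} = \int_0^1 F(T,s)F(T,s)^\dagger\rrd s,\qquad F(T,s) := D(T)^{-1} e^{-JTs}C,$$
and then analyze the integrand block by block. Writing each Jordan block as $M_i = \mathrm{i}\omega_i I + N_i$ with $N_i$ the nilpotent shift, $e^{-M_i Ts} = e^{-\mathrm{i}\omega_i Ts}\sum_{j=0}^{d_i-1}\tfrac{(-Ts)^j}{j!}N_i^j$. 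After the $D_i(T)^{-1}$ rescaling, each entry of $D_i(T)^{-1}e^{-M_iTs}C_i$ is, up to the oscillatory factor $e^{-\mathrm{i}\omega_i Ts}$, a polynomial in $s$ whose coefficients carry powers $T^{q-d_i}$ with $q \le d_i$. Only the $q = d_i$ contribution survives, giving
$$D_i(T)^{-1}e^{-M_iTs}C_i = e^{-\mathrm{i}\omega_i Ts}\bigl(V_i(s) + o(1)\bigr)$$
uniformly for $s \in [0,1]$, where $V_i(s) = \phi_i(s)\gamma_i$ is rank one with $\phi_i(s)_p = \tfrac{(-s)^{d_i-p}}{(d_i-p)!}$ and $\gamma_i$ the last row of $C_i$.

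The $(i,l)$-block of $F(T,s) F(T,s)^\dagger$ then equals $e^{-\mathrm{i}(\omega_i-\omega_l)Ts}\bigl(V_i(s)V_l(s)^\dagger + o(1)\bigr)$. When $\omega_i = \omega_l$ the phase is trivial, the integrand converges uniformly in $s$, and the integral tends to $\int_0^1 V_i(s)V_l(s)^\dagger\rrd s$. When $\omega_i \neq \omega_l$ the integrand is a smooth bounded function times an oscillation at frequency $(\omega_i-\omega_l)T$, and a single integration by parts gives an $O(1/T)$ bound, so this block vanishes in the limit. Hence the limit $S$ exists and is block-diagonal once the Jordan blocks are grouped by distinct imaginary eigenvalue $\mathrm{i}\omega$, each diagonal block being the Gram matrix $\int_0^1 V(s)V(s)^\dagger\rrd s$ with $V(s)$ the vertical stack of the $V_i(s)$ over all $i$ such that $\omega_i = \omega$.

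To conclude that $S$ is nonsingular it suffices to show each such Gram block is positive definite, i.e., that the rows of $V(s)$ are linearly independent as $\C^m$-valued functions of $s$. Since row $(i,p)$ of $V(s)$ is $\tfrac{(-s)^{d_i-p}}{(d_i-p)!}\gamma_i$, any vanishing linear combination takes the form $\sum_i f_i(s)\gamma_i \equiv 0$ with $f_i$ an arbitrary polynomial of degree less than $d_i$. The Popov--Belevitch--Hautus test applied to $(J,C)$ at the eigenvalue $\mathrm{i}\omega$ identifies the left nullspace of $\mathrm{i}\omega I - J$ with the span of the indicator row-vectors of the last row of each Jordan block with eigenvalue $\mathrm{i}\omega$; controllability thus forces the corresponding $\gamma_i$ to be linearly independent in $\C^m$. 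This forces every $f_i \equiv 0$ and every coefficient to vanish, giving positive definiteness of the block and hence nonsingularity of $S$.

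The main obstacle is the careful handling of the oscillatory factor $e^{-\mathrm{i}(\omega_i-\omega_l)Ts}$ for cross-eigenvalue blocks: the $o(1)$ correction in the pointwise limit must be uniform in $s$ so that it survives multiplication by a bounded oscillation, and the off-diagonal vanishing must be extracted by a Riemann--Lebesgue-type integration by parts rather than by pointwise convergence. Once that bookkeeping is in place, the rest reduces to elementary integration of polynomial pieces and a clean application of the PBH criterion.
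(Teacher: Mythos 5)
Your proof is correct and takes a genuinely different route from the paper's. Where the paper evaluates the Gramian block-by-block via the decomposition $J=\diag(M_1,\ldots,M_k)$, extracts the leading power of $T$ in each $\int_0^T R_{ij}(t)\,\rrd t$ directly (Lemma~\ref{lem:limitSDPD}), handles distinct eigenvalues separately in Lemma~\ref{lem:offdiagS}, and then proves nonsingularity by constructing the auxiliary controllable pair $(\Pi,\Gamma)$ whose infinite-horizon Gramian equals $\Phi$ (Lemma~\ref{lem:infsys}), you instead rescale time $t=Ts$ to write $\frac1T D(T)^{-1}W_{J,C}(T)D(T)^{-1}=\int_0^1 F(T,s)F(T,s)^\dagger\,\rrd s$ with $F(T,s)=D(T)^{-1}e^{-JTs}C$, prove uniform convergence $F_i(T,s)=e^{-\mathrm{i}\omega_i Ts}(V_i(s)+O(1/T))$ on $[0,1]$, and then read off the limit via dominated convergence on like-eigenvalue blocks and a Riemann--Lebesgue/integration-by-parts estimate on cross-eigenvalue blocks. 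This buys a cleaner, more unified treatment: the same-eigenvalue and distinct-eigenvalue cases fall out of one computation, and the diagonal blocks of $S$ are \emph{manifestly} Gram matrices $\int_0^1 V(s)V(s)^\dagger\,\rrd s$, so nonsingularity reduces to the linear independence of the functions $s\mapsto \frac{(-s)^{d_i-p}}{(d_i-p)!}\gamma_i$, with no auxiliary system required. The paper's $(\Pi,\Gamma)$ construction is, in effect, the same Gram observation reparametrized over $[0,\infty)$ via decaying exponentials rather than over $[0,1]$ via monomials. Both approaches hinge on the same key fact from PBH, namely that the last rows $\gamma_i$ of the $C_i$ within a fixed eigenvalue group are linearly independent (the paper's Lemma~\ref{lem:pairJB}), and your appeal to it is correct; indeed your computed limit $\gamma_i\gamma_l^\dagger\int_0^1\phi_i(s)\phi_l(s)^\dagger\,\rrd s$ agrees entry-by-entry with the paper's $g_ig_l^\dagger\Delta_i\Psi_{il}\Delta_l$. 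The one point you flag yourself---keeping the $o(1)$ uniform so it can be integrated against the oscillation---is genuine but easily discharged since the error rows are polynomials in $s$ of bounded degree with coefficients $O(T^{-1})$, so both the error term and its $s$-derivative are uniformly $O(1/T)$ on $[0,1]$.
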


By Theorem~\ref{thm:pimlimit}, it is straightforward that 
$$
\lim_{T\to\infty} W_{J, C}(T)^{-1} = \lim_{T\to\infty} \frac{1}{T}D(T)^{-1}S^{-1}D(T)^{-1} = 0,
$$
which establishes Theorem~\ref{thm:main} for the case where $J$ has only imaginary eigenvalues (in this case, we have that $n_{\mathsf{u}} = 0$, so the matrix $K$ given in~\eqref{eq:limitK} is $0$).

\subsection{On repeated eigenvalues}\label{ssec:repeig}
In this subsection, we prove Theorem~\ref{thm:pimlimit} for the case where all the eigenvalues of $J$ are the same, and equal to $\lambda$.

To prove the result, we first derive a property of the $C$ matrix that follows from the controllability of the system $(J,C)$. Let $g_i \in \C^m$ be the {\em last} row of $C_i$, $1 \leq i \leq k$, and gather all such vectors in the matrix $G$: 
$$
G :=
\begin{bmatrix}
g_1 \\
\vdots \\
g_{k}
\end{bmatrix}\in \C^{k \times m}.$$
We have the following result:
\begin{lemma}\label{lem:pairJB}
If the pair $(J,C)$ is controllable, then
$G$ is of full row rank.
\end{lemma}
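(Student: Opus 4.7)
The plan is to deduce the claim directly from the Popov--Belevitch--Hautus (PBH) controllability test. Since the subsection fixes $J$ to have all Jordan blocks sharing a single eigenvalue $\lambda$, controllability of $(J,C)$ is equivalent to the $n \times (n+m)$ block matrix $[J - \lambda I, \, C]$ having full row rank $n$, i.e., no nonzero left row vector simultaneously annihilates $J - \lambda I$ and $C$.

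First I would pin down the left null space of $J - \lambda I$. With $J = \diag(M_1, \ldots, M_k)$, one has $J - \lambda I = \diag(N_1, \ldots, N_k)$, where each $N_i$ is the $d_i \times d_i$ nilpotent shift with $1$'s on the superdiagonal. A one-line calculation gives $v^\top N_i = (0, v_1, \ldots, v_{d_i - 1})$, so the left null space of $N_i$ is one-dimensional and spanned by the standard basis vector picking out the \emph{last} coordinate of block $i$. Assembling across blocks, the left null space of $J - \lambda I$ is $k$-dimensional and spanned by the $k$ vectors each supported on the final row of one of the blocks.

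Next I would translate the PBH condition into a condition on $G$. For $v = (v_1, \ldots, v_k) \in \C^k$, the linear combination of these $k$ block-indicator null vectors with coefficients $v_i$, when multiplied on the left of $C$, extracts the last row $g_i$ of each $C_i$ weighted by $v_i$, producing $\sum_i v_i g_i = v^\top G$. The PBH condition therefore forces $v^\top G = 0 \Rightarrow v = 0$, which is exactly the statement that the rows of $G$ are linearly independent, i.e., that $G$ has full row rank $k$ (and in particular $k \le m$).

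I do not anticipate a real obstacle: the entire content is the elementary identification of the left null space of a nilpotent Jordan shift, plus the bookkeeping that matches block-indicator left null vectors to the rows $g_i$ of $G$. The hypothesis that all eigenvalues coincide is essential here, since otherwise the left null spaces of $J - \mu I$ for different $\mu$ fragment into smaller pieces and only deliver linear independence within each eigenvalue class, not across the whole matrix $G$.
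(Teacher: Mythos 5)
Your proposal is correct and takes essentially the same approach as the paper: both invoke the Hautus/PBH test for $(J,C)$ at the common eigenvalue $\lambda$, identify the left null space of $J - \lambda I$ with the span of the indicator vectors of the last coordinate in each Jordan block, and conclude that full row rank of $[\lambda I - J,\ C]$ forces the last rows $g_i$ of the $C_i$'s to be linearly independent. The paper phrases this directly as "the rows $[0,\ g_i]$ must be linearly independent," whereas you make the left-null-space computation explicit, but the substance is identical.
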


\begin{proof}
Since $(J,C)$ is controllable, so are the pairs $(M_i, C_i)$ for $1 \leq i \leq k$. By the Hautus test, the following matrix has full row rank:
$$
\begin{bmatrix}
\lambda I -  J &   C 
\end{bmatrix} = 
\begin{bmatrix}
\lambda I - M_{ 1} & & & C_{1} \\
& \ddots & & \vdots \\
& & \lambda I - M_{k} & C_{k}
\end{bmatrix}.
$$
Now, observe that $(\lambda I -  M_{i})$ are matrices with all zero entries save for the upper-diagonal, which has entries one. Hence, the last rows of  $(\lambda I -  M_{i})$ are all zeros. Thus, for the matrix $\begin{bmatrix}
\lambda I -  J &   C 
\end{bmatrix}$ to have full row rank, it is necessary that the last rows of the $C_{i}$'s be linearly independent, which proves the result.
\end{proof}

In the  following Lemma, we establish the divergence rate of $W_{J,C}(T)$. 

\begin{lemma}\label{lem:limitSDPD}
It holds that
\begin{equation}\label{eq:DPD}
\lim_{T\to\infty}\frac{1}{T} D(T)^{-1} W_{J,C}(T) D(T)^{-1} = \Delta \Phi \Delta,
\end{equation}
where $\Delta$ and $\Phi$ are given by
$$
\Delta := 
\begin{bmatrix}
\Delta_1 & & \\
& \ddots & \\
& & \Delta_k
\end{bmatrix} \quad \mbox{with }
\Delta_i:=
\begin{bmatrix}\frac{(-1)^{d_i - 1}}{(d_i-1)!} &  &  &  \\
& \frac{(-1)^{d_i - 2}}{(d_i-2)!} & & \\
& & \ddots & \\
& & & 1
\end{bmatrix},
$$
and 
$$
\Phi:= 
\begin{bmatrix}
g_1 g_1^\dagger \Psi_{11} & \cdots & g_1 g_k^\dagger\Psi_{1k} \\
\vdots & \ddots & \vdots \\
g_k g_1^\dagger \Psi_{k1} & \cdots & g_k g_k^\dagger \Psi_{kk}
\end{bmatrix} \quad \mbox{with }
\Psi_{ij} := \left [\frac{ 1 }{(d_i + d_j - \alpha - \beta + 1)} \right ]_{\alpha\beta} 
$$
for $1 \leq \alpha \leq d_i$ and $1 \leq \beta \leq d_j$.
\end{lemma}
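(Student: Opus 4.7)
The plan is to compute the limit entry-by-entry, exploiting the Jordan structure of $J$ together with the hypothesis that $\lambda$ is purely imaginary. Each diagonal block of $J$ is a Jordan block $M_i = \lambda I + N_i$ where $N_i$ is the nilpotent shift matrix with ones on the super-diagonal, so
\begin{equation*}
e^{-M_i t} = e^{-\lambda t} \sum_{\ell = 0}^{d_i - 1} \frac{(-t)^\ell}{\ell!}\, N_i^{\ell},
\end{equation*}
a matrix polynomial in $t$ of degree $d_i - 1$ weighted by the unimodular phase $e^{-\lambda t}$. Because $\lambda$ is purely imaginary, $e^{-\lambda t}\, \overline{e^{-\lambda t}} = 1$, and these phases cancel inside every block $e^{-M_i t} C_i C_j^\dagger e^{-M_j^\dagger t}$ of the integrand defining $W_{J, C}(T)$.

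Next, I would read off the $\alpha$-th row of $e^{-M_i t} C_i$ (ignoring the phase) as $\sum_{\ell = \alpha}^{d_i} \frac{(-t)^{\ell - \alpha}}{(\ell - \alpha)!}(C_i)_{\ell, :}$, which follows because $N_i$ shifts rows upward. Consequently, the $(\alpha, \beta)$ entry of the $(i,j)$-block of $W_{J, C}(T)$ equals
\begin{equation*}
\sum_{\ell = \alpha}^{d_i} \sum_{p = \beta}^{d_j} \frac{(-1)^{\ell + p - \alpha - \beta}}{(\ell - \alpha)!\, (p - \beta)!}\, (C_i)_{\ell, :}\, (C_j)_{p, :}^\dagger \int_0^T t^{\ell + p - \alpha - \beta}\, \rrd t,
\end{equation*}
and the integral evaluates to $T^{\ell + p - \alpha - \beta + 1}/(\ell + p - \alpha - \beta + 1)$. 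Applying the scaling $\frac{1}{T} D(T)^{-1} \, \cdot\, D(T)^{-1}$ multiplies this $(\alpha, \beta)$ entry of the $(i, j)$-block by $T^{\alpha - d_i + \beta - d_j - 1}$, yielding an overall power $T^{\ell + p - d_i - d_j}$. This exponent equals $0$ exactly when $(\ell, p) = (d_i, d_j)$ and is strictly negative otherwise, so only that term survives the limit; its coefficient $(C_i)_{d_i, :}(C_j)_{d_j, :}^\dagger$ is precisely the scalar $g_i g_j^\dagger$.

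I would then read off the surviving limit and match it to $\Delta \Phi \Delta$: the $(\alpha, \beta)$ entry of the $(i, j)$-block reduces to $\frac{(-1)^{d_i - \alpha}}{(d_i - \alpha)!}\cdot \frac{g_i g_j^\dagger}{d_i + d_j - \alpha - \beta + 1} \cdot \frac{(-1)^{d_j - \beta}}{(d_j - \beta)!}$, which is exactly $(\Delta_i)_{\alpha\alpha}\, (g_i g_j^\dagger \Psi_{ij})_{\alpha\beta}\, (\Delta_j)_{\beta\beta}$, i.e.\ the $(\alpha, \beta)$ entry of $\Delta_i (g_i g_j^\dagger \Psi_{ij}) \Delta_j$. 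Reassembling blocks then produces $\Delta \Phi \Delta$.

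The main obstacle is largely bookkeeping rather than conceptual: one must carefully track the index ranges (noting that the $\alpha$-th row of $e^{-M_i t} C_i$ only sees rows $\ell \geq \alpha$ of $C_i$ because $N_i$ is strictly upper triangular) and correctly identify that $(d_i, d_j)$ is the \emph{unique} pair maximizing $\ell + p$, which is what makes the limit finite and nonzero. The one essential place where the hypothesis on $\lambda$ is used is the phase cancellation $|e^{-\lambda t}|^2 = 1$; were $\re(\lambda) \neq 0$, the integrand would carry a nontrivial exponential factor and the asymptotic would have a completely different character.
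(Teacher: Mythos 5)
Your proof is correct and takes essentially the same approach as the paper: phase cancellation using $\re\lambda=0$, expansion of $e^{-\widetilde M_i t}C_i$ via the nilpotent shift, and identification of the unique dominant power of $T$ after the $\frac{1}{T}D(T)^{-1}\cdot D(T)^{-1}$ scaling. The paper packages the sub-leading terms as a remainder matrix $H_i(t)$ and splits $R_{ij}(t)$ into four matrix products ($I_1,I_2,I_3$ and the main term) whose scaled integrals are estimated separately, whereas you expand the $(\alpha,\beta)$ entry as an explicit double sum over $(\ell,p)$ and observe that $(\ell,p)=(d_i,d_j)$ is the unique maximizer of $\ell+p$; this is the same calculation, just organized entry-wise, and it lands on the identical coefficient $(\Delta_i)_{\alpha\alpha}\,g_ig_j^\dagger(\Psi_{ij})_{\alpha\beta}\,(\Delta_j)_{\beta\beta}$.
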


\begin{proof}
Let $\widetilde J := J - \lambda I$ and $\widetilde M_i:= M_i - \lambda I$. 
Then, the controllability Gramian $W_{J,C}(T)$ takes the form
\begin{equation*}
W_{J,C}(T) = \int_0^T e^{-\lambda t} e^{-\widetilde J t} C C^\dagger e^{-\widetilde J^{\, \top} t} e^{\lambda t} \rrd t\\=\int_0^T  \underbrace{e^{-\widetilde J t} C  C^\dagger e^{-\widetilde J^{\, \top}t}}_{R(t)}  \rrd t.
\end{equation*}
We decompose the integrand $R(t)$ into blocks according to 
$$
R(t) = [R_{ij}(t)]_{1\leq i,j\leq k}, \quad \mbox{with } R_{ij}(t) := e^{-\widetilde M_i t}  C_i  C_j^\dagger e^{-\widetilde M_j^\top t} \in \C^{d_i\times d_j}.
$$
Recalling that $\widetilde M_i$ is of dimension $d_i\times d_i$ and that $g_i$ is the last row of $C_i$,  we express the product $e^{-\widetilde M_i t} C_i $ isolating the terms of {\em highest} order in $t$ 
\begin{multline}\label{eq:xpeepee}
e^{-\widetilde M_i t} C_i = \begin{bmatrix}
    1 & -t & \frac{t^2}{2}  & \cdots & \frac{ (-t)^{d_i-1}}{(d_i - 1)!}\\
    0 & 1 & -t & \cdots & \frac{(-t)^{d_i-2}}{(d_i - 2)!} \\
    \vdots & & \ddots & \ddots & \vdots \\
    0 & 0 &  & 1 & -t \\
    0 & 0 & \cdots  & 0 & 1 
\end{bmatrix} C_i =  
\begin{bmatrix}
\frac{(-t)^{d_i-1}}{(d_i - 1)!} g_i + O(t^{d_i - 2})   \\
\frac{(-t)^{d_i-2}}{(d_i - 2)!} g_i + O(t^{d_i - 3})  \\
\vdots \\
g_i
\end{bmatrix} \\=  D_i(t) \Delta_i \bfo g_i+ H_i(t),
\end{multline}
where $\bfo$ is the vector of ones, $D_i(t)$ is given in~\eqref{eq:defbigD}, 
and $H_i(t)\in \C^{d_i \times m}$ is such that 
\begin{equation}\label{eq:defHi}
H_i(t)=\begin{bmatrix}
    O(t^{d_i - 2}) \\
    \vdots \\
    O(1) \\
    0
\end{bmatrix}, 
\end{equation}
i.e., $H_i(t)$ is a matrix whose entries in the $\alpha$th row, for $1\leq \alpha \leq d_i - 1$, are of order $O(t^{d_i - \alpha - 1})$ and whose last row is~$0$. 
It follows that
\begin{align}
R_{ij}(t)  =&\, \,  H_i(t)H_j(t)^\dagger.\label{eq:rijhihj}\\ 
&  + \Delta_i D_i(t) \bfo g_i H_j(t)^\dagger \label{eq:rijcross1} \\ 
& + H_i(t) g_j^\dagger \bfo^\top  D_j(t) \Delta_j \label{eq:rijcross2} \\
& + \Delta_i D_i(t) \bfo g_i g_j^\dagger \bfo^\top  D_j(t) \Delta_j. \label{eq:rijzeroorder}
\end{align}
For our purpose, it is sufficient to only describe only terms of the highest order in $t$. 
For~\eqref{eq:rijhihj}, we use~\eqref{eq:defHi} to obtain that
$$
\int_0^T  H_i(t) H_j(t)^\dagger  \rrd t = 
\begin{bmatrix}
O(T^{d_i+d_j- 3}) & \cdots & O(T^{d_i-1}) & 0 \\
\vdots & \ddots & \vdots  & 0 \\
O(T^{d_j - 1}) & \cdots & O(T) & 0 \\
0 & \cdots & 0 & 0 
\end{bmatrix}=:I_1,
$$

For~\eqref{eq:rijcross1}, we first use~\eqref{eq:defHi} to obtain 
$$
 g_i H_j(t)^\dagger = 
\begin{bmatrix} O(t^{d_j-2}) &O(t^{d_j-1}) & \cdots & O(1) & 0 
\end{bmatrix}.
$$
Then, using the definition of $D_i(t)$ (see~\eqref{eq:defbigD}), we have that
$$
\Delta_i D_i(t) \bfo g_i H_j(t)^\dagger = 
\begin{bmatrix}
O(T^{d_i+d_j- 3}) & \cdots & O(T^{d_i-1}) & 0 \\
\vdots & \ddots & \vdots  & \vdots \\
O(T^{d_j - 2})& \cdots & O(1) & 0 
\end{bmatrix},
$$
and hence, 
$$
\int_0^T \Delta_i D_i(t) \bfo g_i H_j(t)^\dagger  \rrd t = 
\begin{bmatrix}
O(T^{d_i+d_j- 2}) & \cdots & O(T^{d_i}) & 0 \\
\vdots & \ddots & \vdots  & \vdots \\
O(T^{d_j - 1})& \cdots & O(T) & 0 
\end{bmatrix}=:I_2.
$$

For~\eqref{eq:rijcross2}, we similarly have that
$$
\int_0^T  H_i(t) g_j^\dagger \bfo^\top  D_j(t) \Delta_j  \rrd t = 
\begin{bmatrix}
O(T^{d_i+d_j- 2}) & \cdots & O(T^{d_i - 1}) \\
\vdots & \ddots &  \vdots \\
O(T^{d_j}) & \cdots  & O(T) \\
0 & \cdots & 0 
\end{bmatrix}=:I_3.
$$

With the above definitions of  $I_1$, $I_2$, and $I_3$, a straightforward computation yields
$$\frac{1}{T} D_i(T)^{-1}\left [I_1+I_2+I_3\right]   D_j(T)^{-1} = O(T^{-2}) + O(T^{-1}) + O(T^{-1}).$$  

Finally, for the term~\eqref{eq:rijzeroorder}, note that $g_ig_j^\dagger$ is a scalar and that
\begin{multline*}
\int_0^T D_i(t) \bfo \bfo^\top D_j(t) \rrd t  = \left [ \int_0^T   T^{d_i + d_j - \alpha - \beta } \rrd t  \right ]_{\alpha\beta}  = \left [ \frac{T^{d_i + d_j - \alpha - \beta + 1}}{d_i + d_j - \alpha - \beta + 1} \right ]_{\alpha\beta}\\ = 
T D_i(T) \Psi_{ij} D_j(T),
\end{multline*}
where $1\leq \alpha \leq d_i$ and $1\leq \beta \leq d_j$. 
Thus, 
\begin{equation}\label{eq:limrijt}
\frac{1}{T}D_i(T)^{-1}\left[\int_0^T R_{ij}(t) \rrd t \right]D_j(T)^{-1}= \\
g_ig_j^\dagger \Delta_i \Psi_{ij}\Delta_j + O(T^{-1}). 
\end{equation}
The lemma then follows directly from~\eqref{eq:limrijt}. 
\end{proof}

Note that the diagonal matrix $\Delta$ is invertible. Thus, to establish Theorem~\ref{thm:pimlimit}, it remains to show that the matrix $\Phi$ is invertible as well. Our proof relies on introducing an auxiliary linear time-invariant system, which is controllable and whose controllability Gramian at $T = \infty$ is equal to $\Phi$. 

To this end, we define the system pair $(\Pi, \Gamma)$, which has the same dimension as $(J,C)$, as follows: 
$$
\Pi := 
\begin{bmatrix}
\Pi_1 & & \\
& \ddots & \\
& & \Pi_k
\end{bmatrix}, \quad \mbox{with }
\Pi_i := 
\frac{1}{2}
\begin{bmatrix}
2d_i - 1 & & & \\
& 2d_i-3 & & \\
& & \ddots & \\
& & & 1
\end{bmatrix}
\quad \mbox{and} \quad 
\Gamma := \begin{bmatrix} \bfo g_1 \\ \vdots \\ \bfo g_k
\end{bmatrix},$$ 
where $\Pi_i \in \R^{d_i \times d_i}$ and $\bfo g_i \in \C^{d_i \times m}$, for $1 \leq i \leq k$.

\begin{lemma}\label{lem:infsys}
The pair $(\Pi,\Gamma)$ is controllable and, moreover, 
\begin{equation}\label{eq:Wpigamma}
W_{\Pi,\Gamma}(\infty)=\Phi,
\end{equation} 
where $\Phi$ is given in Lemma~\ref{lem:limitSDPD}. 
\end{lemma}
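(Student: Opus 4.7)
The plan is to compute $W_{\Pi,\Gamma}(\infty)$ block by block from the definition and match entries with $\Phi$, and then to apply the Hautus test for controllability, which will reduce to a linear independence condition supplied by Lemma~\ref{lem:pairJB}. Both steps are essentially direct calculations; the one observation needed is that the diagonal entries of $\Pi$ have been engineered so that the pairwise sums $(2d_i-2\alpha+1)/2 + (2d_j-2\beta+1)/2$ reproduce the denominator $d_i+d_j-\alpha-\beta+1$ of $\Psi_{ij}$, after which the Gramian identity is immediate.

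For the Gramian identity, I would use the block structures of $\Pi$ and $\Gamma$ to write the $(i,j)$ block of $W_{\Pi,\Gamma}(T)$ as $g_i g_j^\dagger \int_0^T e^{-\Pi_i t} \bfo \bfo^\top e^{-\Pi_j t}\,\rrd t$, pulling out the scalar $g_i g_j^\dagger$ and using that $\Pi_j$ is real diagonal. Since $(\Pi_i)_{\alpha\alpha}=(2d_i-2\alpha+1)/2$, the $(\alpha,\beta)$ entry of the integrand becomes $\exp\bigl(-(d_i+d_j-\alpha-\beta+1)t\bigr)$, whose exponent is at least $1$; integrating on $[0,\infty)$ yields $1/(d_i+d_j-\alpha-\beta+1) = (\Psi_{ij})_{\alpha\beta}$. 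Thus the $(i,j)$ block of $W_{\Pi,\Gamma}(\infty)$ equals $g_i g_j^\dagger \Psi_{ij}$, which matches $\Phi$ entry by entry, establishing~\eqref{eq:Wpigamma}.

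For controllability, the eigenvalues of $\Pi$ are the half-odd-integers $(2\ell+1)/2$ with $0 \leq \ell \leq \max_i d_i - 1$, and for each such $\mu$, the diagonal entries of $\mu I - \Pi$ vanish precisely at position $d_i - \ell$ of block $i$, for every $i$ with $d_i \geq \ell + 1$. Since every row of $\bfo g_i$ is $g_i$, the Hautus condition $\rank[\mu I - \Pi,\Gamma] = n$ at this $\mu$ reduces to linear independence of $\{g_i : d_i \geq \ell+1\}$; this is a subset of $\{g_1,\ldots,g_k\}$, which is linearly independent by Lemma~\ref{lem:pairJB}. Thus $(\Pi,\Gamma)$ is controllable. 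I do not anticipate a real obstacle here; the lemma is essentially a bookkeeping exercise once the design of $\Pi$ is unpacked.
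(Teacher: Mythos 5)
Your proof is correct and follows essentially the same approach as the paper: the Gramian identity is verified block by block using that the diagonal of $\Pi_i$ makes the exponents equal $d_i+d_j-\alpha-\beta+1 \geq 1$, and controllability is established via the Hautus test, observing that the zero rows of $\mu I - \Pi$ occur at most once per block and the corresponding rows of $\Gamma$ are the $g_i$'s, which are linearly independent by Lemma~\ref{lem:pairJB}. Your description is slightly more explicit about the half-odd-integer structure of the spectrum and the precise position $d_i - \ell$ of each zero, but the argument is the same.
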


\begin{proof}
We first show that $(\Pi, \Gamma)$ is controllable and then establish~\eqref{eq:Wpigamma}.

\xc{Proof that $(\Pi, \Gamma)$ is controllable.}
    We use the Hautus test to establish controllability. Precisely, we show that for any eigenvalue $\mu$ of $\Pi$, 
    \begin{equation}\label{eq:hautustest}
    \rank \begin{bmatrix} \mu I - \Pi & \Gamma \end{bmatrix} = n.
    \end{equation}
    We have that $\Pi$ is diagonal with each block $\Pi_i$ having pairwise distinct eigenvalues.  
    Also, it should be clear that the eigenvalues of $\Pi$ are repeated {\em at most} $k$ times. 
    Thus, the diagonal matrix $(\mu I - \Pi)$ 
    has at most $k$ zero rows, say $k'\leq k$, with at most one zero row per block $(\mu I - \Pi_i)$.
    These rows correspond in the matrix given in~\eqref{eq:hautustest}  
    to rows of the form 
    \begin{equation}\label{eq:zerogi}
    \begin{bmatrix} 0 & g_{i_j}\end{bmatrix} \quad \mbox{for some pairwise distinct } i_1, \ldots, i_{k'} \in \{1,\ldots,k\}.
    \end{equation} 
    The other rows in this matrix are clearly linearly independent---since the corresponding rows in $(\mu I - \Pi)$ are linearly independent---and independent of the rows in~\eqref{eq:zerogi}. 

    It remains to show that the rows in~\eqref{eq:zerogi} are linearly independent. 
    But this holds because the $i_j$'s  are pairwise distinct and, by Lemma~\ref{lem:pairJB}, the $g_i$'s are linearly independent. Together, these two facts yield that the row vectors in~\eqref{eq:zerogi} are also linearly independent. This proves that the matrix in~\eqref{eq:hautustest} has full row rank and hence, 
    the pair $(\Pi,\Gamma)$ is controllable.

 \xc{Proof that~\eqref{eq:Wpigamma} holds.} First, note that the controllability Gramian
 $$W_{\Pi,\Gamma}(\infty) = \int_0^\infty e^{-\Pi t} \Gamma \Gamma^\dagger e^{-\Pi t} \rrd t$$ 
 exists since $-\Pi$ is Hurwitz. 
 We now evaluate it block by block (the blocks match the decomposition of $\Phi$ given above in the statement of Lemma~\ref{lem:limitSDPD}). To this end, we evaluate the product
 \begin{align*}
    e^{-\Pi_i t}\bfo g_i g_j^\dagger \bfo^\top e^{-\Pi_j t} & =  g_ig_j^\dagger \left[ e^{-(d_i+d_j-\alpha-\beta+1) t} \right]_{\alpha\beta},
\end{align*}
for $1\leq \alpha \leq d_i$ and $1 \leq \beta \leq d_j$. Note that $(d_i + d_j - 
\alpha - \beta + 1) \geq 1$ for all $\alpha$ and $\beta$. 
Integrating the above yields 
\begin{align*}
    \int_0^\infty e^{-\Pi_i t}\bfo g_i g_j^\dagger \bfo^\top e^{-\Pi_j t} & =  g_ig_j^\dagger \Psi_{ij}.
\end{align*}
This completes the proof. 
\end{proof}

Theorem~\ref{thm:pimlimit} for  the case of Jordan blocks with the same eigenvalues is then an immediate consequence of Lemmas~\ref{lem:limitSDPD} and~\ref{lem:infsys}, and the fact that the controllability Gramian $W_{\Pi,\Gamma}(\infty)$ is of full rank. \hfill{$\qed$}

\subsection{Proof of Theorem~\ref{thm:pimlimit}}

We now consider the general case where the matrix $J$  has Jordan blocks  with imaginary, but not necessarily the same, eigenvalues. 

Let $r\leq k$ be the number of {\em distinct} eigenvalues of $J$, and let $k_1,\ldots, k_r$ be the numbers of blocks $M_j$ with eigenvalues $\lambda_1,\ldots,\lambda_r$, respectively.
Let $s_0:= 0$ and $s_p:= \sum_{q = 1}^{p} k_q$, for $p = 1,\ldots, r$. We let
$$
\bM_p := 
\begin{bmatrix}
M_{s_{p-1} + 1} & & \\
& \ddots & \\
& & M_{s_p}
\end{bmatrix}, \,  
\bC_p := 
\begin{bmatrix}
C_{s_{p-1} + 1} \\
\vdots \\
C_{s_p}
\end{bmatrix}, \mbox{ and } 
\bD_p:= 
\begin{bmatrix}
D_{s_{p-1} + 1} & & \\
& \ddots & \\
& & D_{s_p}
\end{bmatrix}.
$$ 
In words, $\bM_p$ contains all the Jordan blocks $M_j$'s with eigenvalue $\lambda_p$, and $\bC_p$ and $\bD_p$ contain the corresponding $C_j$'s and $D_j$'s, for $1 \leq p \leq r$, respectively, where $D_j$ is as in~\eqref{eq:defbigD} where we omit the dependence on $T$.

We next decompose the matrix $W_{J,C}(T)$ according to the grouping of Jordan blocks by  eigenvalues introduced above, namely,  we set 
$$
W_{J,C}(T) =
\begin{bmatrix}
L_{11} & \cdots & L_{1r}\\
\vdots & \ddots & \vdots \\
L_{r1} & \cdots & L_{rr} 
\end{bmatrix},
$$
where
$$
L_{pq} := \int_0^T e^{-\bM_p t} \bC_p \bC_q^\dagger e^{-\bM_q^\dagger t} \rrd t.  
$$
We show that the $pq$th blocks of $S$ exist and compute them explicitly. We consider two cases, first dealing with the diagonal blocks (in Lemma~\ref{lem:diagS}) and then the off-diagonal blocks (in Lemma~\ref{lem:offdiagS}).

\begin{lemma}\label{lem:diagS}
For every $p = 1,\ldots, r$, 
$$
S_{pp} = \lim_{T\to\infty} \frac{1}{T} \bD_p(T)^{-1} L_{pp}(T) \bD_p(T)^{-1}
$$
is nonsingular. 
\end{lemma}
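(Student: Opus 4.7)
The plan is to reduce this statement directly to the special case of Theorem~\ref{thm:pimlimit} that has just been proved in Subsection~\ref{ssec:repeig}, namely the case where all Jordan blocks share a single eigenvalue. Indeed, by construction, $\bM_p$ gathers precisely those Jordan blocks $M_j$ of $J$ whose eigenvalue equals $\lambda_p$, so $\bM_p$ is in Jordan normal form with a \emph{single} imaginary eigenvalue $\lambda_p$. Moreover, $\bD_p$ is exactly the diagonal rescaling matrix of the form~\eqref{eq:defbigD} built from the block sizes of $\bM_p$. Hence, once I verify that $(\bM_p,\bC_p)$ is controllable, the result of Subsection~\ref{ssec:repeig} (combining Lemmas~\ref{lem:limitSDPD} and~\ref{lem:infsys}) applies verbatim to the pair $(\bM_p,\bC_p)$ and yields that $\frac{1}{T}\bD_p(T)^{-1}L_{pp}(T)\bD_p(T)^{-1}$ converges to a nonsingular matrix, which I would then take as the definition of $S_{pp}$.

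The only substantive step is therefore the controllability of $(\bM_p,\bC_p)$. I would establish this via the Hautus test. For any $\mu\in\spec(\bM_p)=\{\lambda_p\}$, the matrix $[\mu I - J\ \ C]$ has full row rank by the assumed controllability of $(J,C)$. The blocks $\bM_q$ for $q\neq p$ all have eigenvalues different from $\lambda_p$, so the corresponding diagonal blocks $\mu I - \bM_q$ are invertible. A block row-reduction then shows that $[\mu I - \bM_p\ \ \bC_p]$ must itself have full row rank, which is the Hautus condition for $(\bM_p,\bC_p)$. This is essentially the standard fact invoked in the Notations section for block-diagonal matrices with spectrally disjoint blocks.

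With controllability in hand, the proved version of Theorem~\ref{thm:pimlimit} gives the existence of the limit as well as its nonsingularity (the latter coming from Lemma~\ref{lem:infsys}, which exhibited the limit as the finite-time-infinity Gramian of a controllable auxiliary pair $(\Pi,\Gamma)$, hence positive definite). Concretely, $S_{pp}=\Delta_p \Phi_p \Delta_p$, where $\Delta_p$ and $\Phi_p$ are the analogs of the matrices of Lemma~\ref{lem:limitSDPD} constructed from the blocks inside $\bM_p$ and the corresponding last-row vectors of $\bC_p$; both are nonsingular, giving the claim.

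I do not anticipate a serious obstacle here. The slight subtlety is simply the bookkeeping to observe that the $D_i(T)$ factors inside $\bD_p(T)$ match exactly the rescaling prescribed by~\eqref{eq:defbigD} when Theorem~\ref{thm:pimlimit} is applied to $(\bM_p,\bC_p)$, and that the imaginary-eigenvalue hypothesis on $J$ is inherited by $\bM_p$. The genuinely novel difficulty in Theorem~\ref{thm:pimlimit} for the general case lies in the \emph{off-diagonal} blocks $S_{pq}$ (treated in the next lemma), where cross terms between distinct imaginary eigenvalues require extra work; the diagonal blocks, on the other hand, are an immediate corollary of the single-eigenvalue analysis already carried out.
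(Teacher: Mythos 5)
Your proposal is correct and takes essentially the same route as the paper: the paper's proof is precisely the observation that $L_{pp}(T)$ is the controllability Gramian of $(\bM_p,\bC_p)$, that $\bM_p$ has a single (imaginary) eigenvalue, and that the result of Subsection~\ref{ssec:repeig} then applies. The only cosmetic difference is that you re-derive controllability of $(\bM_p,\bC_p)$ via a Hautus argument exploiting spectral disjointness, whereas the paper simply invokes the general fact (stated in its Notations section) that controllability of a block-diagonal pair passes to each diagonal sub-pair, which holds without any spectral assumption.
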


\begin{proof}
Note that $L_{pp}(T)$ is the controllability Gramian associated with $(\bM_p, \bC_p)$, and that by construction, all the eigenvalues of $\bM_p$ are the same. 
The lemma thus follows directly from the result of Subsection~\ref{ssec:repeig}. 
\end{proof}

We now evaluate $S_{pq}$, for $p\neq q$, and have the following lemma: 

\begin{lemma}\label{lem:offdiagS}
For $p,q \in \{ 1,\ldots, r\}$ with $p \neq q$, we have 
$$
S_{pq} = \lim_{T\to\infty} \frac{1}{T} \bD_{p}(T)^{-1} L_{pq}(T) \bD_{q}(T)^{-1} = 0.
$$
\end{lemma}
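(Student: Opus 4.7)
The plan is to factor out the distinct eigenvalues, reducing the matrix integrand to a polynomial in $t$ multiplied by a purely oscillatory exponential; then use integration by parts to show that the resulting entries grow polynomially (not super-polynomially), at a rate slow enough to be killed by the two-sided scaling by $\bD_p(T)^{-1}$ and the factor $1/T$.

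More precisely, I would begin by writing $\bM_p = \lambda_p I + N_p$ and $\bM_q = \lambda_q I + N_q$, where $N_p, N_q$ are the nilpotent parts (the strictly upper-triangular Jordan data). Then $e^{-\bM_p t} = e^{-\lambda_p t} Q_p(t)$ with $Q_p(t) := e^{-N_p t}$ a matrix whose entries are polynomials in $t$ of degrees dictated by the Jordan block sizes; similarly for $q$. Substituting,
\begin{equation*}
L_{pq}(T) = \int_0^T e^{-\omega_{pq} t}\, Q_p(t)\, \bC_p \bC_q^\dagger\, Q_q(t)^\dagger \,\rrd t,
\qquad \omega_{pq} := \lambda_p + \overline{\lambda_q}.
\end{equation*}
Since $\lambda_p, \lambda_q$ are imaginary and distinct, $\omega_{pq}$ is purely imaginary and nonzero. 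Moreover, exactly as in the derivation leading to~\eqref{eq:xpeepee}, the $(\alpha,\beta)$-entry of the $(i,j)$-block of $Q_p(t) \bC_p \bC_q^\dagger Q_q(t)^\dagger$ is a polynomial in $t$ of degree at most $(d_i - \alpha) + (d_j - \beta)$, where $i$ and $j$ range over the Jordan blocks collected in $\bM_p$ and $\bM_q$ respectively.

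The key analytic input is the elementary identity
\begin{equation*}
\int_0^T e^{-\omega t}\, t^k \,\rrd t \;=\; O(T^k) \quad \text{as } T \to \infty,
\end{equation*}
valid for any nonzero purely imaginary $\omega$ and any $k \geq 0$. This follows by repeated integration by parts: each step produces a boundary term of the form $\omega^{-1} e^{-\omega T} T^k$ (bounded in modulus by $|\omega|^{-1} T^k$, since $|e^{-\omega T}|=1$) plus an integral with the polynomial degree reduced by one. Applying this entrywise shows that the $(\alpha,\beta)$-entry of the $(i,j)$-subblock of $L_{pq}(T)$ is $O(T^{(d_i-\alpha) + (d_j-\beta)})$.

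Now the matrix $\bD_p(T)^{-1}$ multiplies the $\alpha$-th row of the $i$-th block by $T^{-(d_i - \alpha)}$, and $\bD_q(T)^{-1}$ multiplies the $\beta$-th column of the $j$-th block by $T^{-(d_j - \beta)}$. Combining with the entrywise bound above, the $(\alpha,\beta)$-entry of the $(i,j)$-subblock of $\bD_p(T)^{-1} L_{pq}(T) \bD_q(T)^{-1}$ is $O(1)$. Dividing by the extra factor $T$ in the statement of the lemma gives $O(T^{-1})$, which vanishes as $T \to \infty$. The main (minor) obstacle is purely organizational, namely keeping the indices of Jordan blocks, rows within blocks, and scaling factors aligned; the analytic content is just the oscillatory integration-by-parts bound that replaces the ``$1/(d_i + d_j - \alpha - \beta + 1)$'' gain of Lemma~\ref{lem:limitSDPD} (where the exponent was zero) by a purely oscillatory, and hence non-accumulating, contribution whenever $\lambda_p \neq \lambda_q$.
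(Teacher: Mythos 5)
Your proposal is correct and follows essentially the same route as the paper: factor out the eigenvalues so that each block of the integrand is a polynomial in $t$ (of degree $\le d_i + d_j - \alpha - \beta$ in the $(\alpha,\beta)$ entry of the $(i,j)$-subblock) times the oscillatory factor $e^{(\lambda_q - \lambda_p)t}$, then apply the integration-by-parts estimate $\int_0^T e^{i\omega t} t^\ell\,\rrd t = O(T^\ell)$ for $\omega \neq 0$ real, and observe that the two-sided scaling by $\bD_p(T)^{-1}$, $\bD_q(T)^{-1}$ exactly cancels the polynomial growth, leaving the extra factor $1/T$ to kill the limit. The paper states the integral asymptotics with an explicit leading term rather than the cruder $O(T^\ell)$, and writes out the global block indices $i'=s_{p-1}+i$, $j'=s_{q-1}+j$, but these are cosmetic differences; the analytic content and the step structure are identical.
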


\begin{proof}
Let $\widetilde \bM_p:= \bM_p - \lambda_p I$. 
We have that
\begin{equation*}
L_{pq}(T)  = \int_0^T e^{-\bM_p t} \bC_p \bC_q^\dagger e^{-\bM_q^\dagger t} \rrd t  = \int_0^T \underbrace{e^{(\lambda_q-\lambda_p) t} e^{-\widetilde \bM_p t} \bC_p \bC_q^\dagger e^{-\widetilde \bM_q^\top t}}_{R_{pq}(t)} \rrd t.
\end{equation*}
Decompose $R_{pq}(t)$ as
$$
R_{pq}(t) =
\begin{bmatrix}
Q_{11}(t) & \cdots & Q_{1k_q}(t) \\
\vdots & \ddots & \vdots \\
Q_{k_p 1}(t) & \cdots & Q_{k_pk_q}(t) 
\end{bmatrix}
\quad \mbox{with }
Q_{ij}(t):= e^{(\lambda_q-\lambda_p) t} e^{-\widetilde M_{i'} t} C_{i'} C_{j'}^\dagger e^{-\widetilde M_{j'}^\top t}, 
$$
where the subindices $i'$ and $j'$ in the expression of $Q_{ij}(t)$ are given by
$$i':=s_{p-1}+i \quad \mbox{and} \quad j':=s_{q-1}+j,$$ 
for $1 \leq i \leq k_p$ and $1 \leq j \leq k_q$.

We have evaluated the product $e^{-\widetilde M_{i'}t}C_{i'}$ in~\eqref{eq:xpeepee}. It follows that the $\alpha\beta$th entry of $Q_{ij}(t)$, for $1 \leq \alpha \leq d_{i'}$ and $1 \leq \beta \leq d_{j'}$,  is given by
$$
Q_{ij,\alpha\beta}(t) =
e^{(\lambda_q - \lambda_p) t} 
\sum_{\ell = 0}^{d_{i'} + d_{j'} - \alpha - \beta}\gamma_{\ell} t^{\ell},
$$
for some coefficients $\gamma_\ell$. Note that $\lambda_q\neq \lambda_p$ since $p \neq q$. Since both $\lambda_p$ and $\lambda_q$ are imaginary, we obtain that for any $\ell \geq 0$, 
$$
\int_0^T e^{(\lambda_q - \lambda_p) t} t^\ell \rrd t = 
\begin{cases}
\frac{e^{(\lambda_q - \lambda_p)T}}{\lambda_q - \lambda_p} T^\ell + O(T^{\ell-1}) & \mbox{if } \ell  \geq 1 \vspace{.1cm}\\
\frac{e^{(\lambda_q - \lambda_p)T} -1 }{\lambda_q - \lambda_p} & \mbox{if } \ell = 0,
\end{cases}
$$
which implies that
$$
L_{pq,ij} = \left[\int_0^T Q_{ij,\alpha\beta}(t) \rrd t = O(T^{d_{i'} + d_{j'} - \alpha - \beta})\right]_{\alpha\beta},
$$
for $1\leq \alpha \leq d_{i'}$ and $1\leq \beta \leq d_{j'}$. 
We thus conclude that
$$
\frac{1}{T} D_{i'}(T)^{-1} L_{pq,ij}(T)D_{j'}(T)^{-1} = O(T^{-1}),
$$
and hence, 
$$
S_{pq}=\lim_{T\to\infty} \frac{1}{T} \bD_{p}(T)^{-1} L_{pq}(T)\bD_{q}(T)^{-1} = 0.
$$
This establishes the result. 
\end{proof}

Theorem~\ref{thm:pimlimit} is then an immediate consequence of Lemmas~\ref{lem:diagS} and~\ref{lem:offdiagS}.
\hfill{\qed}

\section{The case of non-positive eigenvalues}\label{sec:pfnonpositive}
In this section, we show that Theorem~\ref{thm:main} holds in the case where the matrix $A$ has only eigenvalues with {\em non-positive} real parts. 
Following our convention, since $A$ has no eigenvalues with positive real parts, the matrix $J_{\mathsf{u}}$ is empty and $J = J_{\mathsf{s}} = \diag(J_{\mathsf{o}}, J_{\mathsf{a}})$, where the real parts of the eigenvalues of $J_{\mathsf{o}} \in \C^{n_{\mathsf{o}} \times n_{\mathsf{o}}}$ and $J_{\mathsf{a}}\in \C^{n_{\mathsf{a}} \times n_{\mathsf{a}}}$ are zero and negative, respectively. Similarly, $C_{\mathsf{u}}$ is empty and we decompose $C = C_{\mathsf{s}} =[C_{\mathsf{o}}; C_{\mathsf{a}}]$.

To proceed, we  partition the controllability Gramian $W_{J,C}(T)$ into $4$ blocks which agree with the decomposition of $J_{\mathsf{s}}$ in $J_{\mathsf{o}}, J_{\mathsf{a}}$:
\begin{equation}\label{eq:defV2}
W_{J,C}(T) =:V_{\mathsf{s}}(T)= 
\begin{bmatrix}
V_{\mathsf{o}}(T) & V_{\mathsf{oa}}(T) \\
V_{\mathsf{oa}}(T)^\dagger & V_{\mathsf{a}}(T) 
\end{bmatrix},
\end{equation}
where
\begin{equation}\label{eq:defv2i}
    \begin{aligned}
        V_{*}(T)& := W_{J_{*}, C_{*}}(T) = \int_0^T e^{-J_{*} t} C_{*} C^\dagger_{*} e^{-J^\dagger_{*} t} \rrd t, \quad \mbox{for } * = \mathsf{o}, \mathsf{a} \\
        V_{\mathsf{oa}}(T) & :=  \int_0^T e^{-J_{\mathsf{o}} t} C_{\mathsf{o}} C^\dagger_{\mathsf{a}} e^{-J^\dagger_{\mathsf{a}} t} \rrd t.
    \end{aligned}
\end{equation}
Using the Schur complement~\cite{horn2012matrix}, we obtain from~\eqref{eq:defV2} that
\begin{equation}\label{eq:schurcompV2}
V_{\mathsf{s}}^{-1} = 
\begin{bmatrix}
 V_{\mathsf{o}}^{-1}+V_{\mathsf{o}}^{-1} V_{\mathsf{oa}} (V_{\mathsf{s}}/V_{\mathsf{o}})^{-1} V_{\mathsf{oa}}^\dagger V_{\mathsf{o}}^{-1} & -V_{\mathsf{o}}^{-1} V_{\mathsf{oa}} (V_{\mathsf{s}}/V_{\mathsf{o}})^{-1}   \\
- (V_{\mathsf{s}}/V_{\mathsf{o}})^{-1}  V_{\mathsf{oa}}^\dagger V_{\mathsf{o}}^{-1}  & (V_{\mathsf{s}}/V_{\mathsf{o}})^{-1}
\end{bmatrix},
\end{equation}
where we have omitted the argument $T$ and $V_{\mathsf{s}}/V_{\mathsf{o}}$ is given by
\begin{equation}\label{eq:schurcompV221}
V_{\mathsf{s}}/V_{\mathsf{o}} := V_{\mathsf{a}} - V_{\mathsf{oa}}^\dagger V_{\mathsf{o}}^{-1} V_{\mathsf{oa}}.
\end{equation} 
The main result of this section is the following:

\begin{theorem}
\label{th:theorem6prop2}
Let $(J,C)$ be a controllable pair, where $J$ is in Jordan normal form and has only eigenvalues with non-positive real parts. Let $V_{\mathsf{s}}(T)$ and the blocks $V_{*}$, for $* = \mathsf{o},\mathsf{a},\mathsf{oa}$, be given as in~\eqref{eq:defV2} and~\eqref{eq:defv2i}, respectively.  
Then, the following items hold:
\begin{enumerate}
\item $\lim_{T\to\infty}(V_{\mathsf{s}}/V_{\mathsf{o}})^{-1} = 0$.
\item $\lim_{T\to\infty} (V_{\mathsf{s}}/V_{\mathsf{o}})^{-1} V_{\mathsf{oa}}^\dagger V_{\mathsf{o}}^{-1} = 0$.
\item $\lim_{T\to\infty} \left (V_{\mathsf{o}}^{-1} + V_{\mathsf{o}}^{-1} V_{\mathsf{oa}}(V_{\mathsf{s}}/V_{\mathsf{o}})^{-1} V_{\mathsf{oa}}^\dagger V_{\mathsf{o}}^{-1}\right ) = 0$.
\end{enumerate}  
\end{theorem}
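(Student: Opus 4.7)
The plan is to interpret items~1, 2, 3 as the three distinct block entries of $V_{\mathsf{s}}(T)^{-1}$ in the $(V_{\mathsf{o}}, V_{\mathsf{a}})$-decomposition, and to show that each vanishes as $T \to \infty$ by combining both Schur-complement representations of $V_{\mathsf{s}}^{-1}$ (pivoting once on $V_{\mathsf{o}}$, once on $V_{\mathsf{a}}$) with Theorem~\ref{thm:pimlimit} applied to the $V_{\mathsf{o}}$ block. Note that $(J_{\mathsf{o}}, C_{\mathsf{o}})$ and $(J_{\mathsf{a}}, C_{\mathsf{a}})$ are controllable since $(J, C)$ is and $J$ is block-diagonal.

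The preparatory step is to derive normal forms for $V_{\mathsf{a}}$ and $V_{\mathsf{oa}}$ via the change of variables $s = T - t$:
\[
V_{\mathsf{a}}(T) = e^{-J_{\mathsf{a}} T} M_{\mathsf{a}}(T) e^{-J_{\mathsf{a}}^\dagger T}, \qquad
V_{\mathsf{oa}}(T) = e^{-J_{\mathsf{o}} T} M_{\mathsf{oa}}(T) e^{-J_{\mathsf{a}}^\dagger T},
\]
where $M_{\mathsf{a}}(T) := \int_0^T e^{J_{\mathsf{a}} s} C_{\mathsf{a}} C_{\mathsf{a}}^\dagger e^{J_{\mathsf{a}}^\dagger s}\,ds$ converges (because $J_{\mathsf{a}}$ is Hurwitz) to a positive definite limit $M_{\mathsf{a}}^\infty$, and $M_{\mathsf{oa}}(T)$ is analogous and likewise convergent. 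The factors $e^{\pm J_{\mathsf{a}} T}$ are the source of exponential growth in $V_{\mathsf{a}}$ and of exponential decay in $V_{\mathsf{a}}^{-1}$, while $e^{-J_{\mathsf{o}} T}$ only contributes polynomial factors.

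For item~3 I would use $(V_{\mathsf{s}}^{-1})_{11} = (V_{\mathsf{s}}/V_{\mathsf{a}})^{-1}$ to reduce it to $(V_{\mathsf{s}}/V_{\mathsf{a}})^{-1} \to 0$. Substituting the normal forms gives
\[
V_{\mathsf{oa}} V_{\mathsf{a}}^{-1} V_{\mathsf{oa}}^\dagger = e^{-J_{\mathsf{o}} T}\bigl(M_{\mathsf{oa}}(T) M_{\mathsf{a}}(T)^{-1} M_{\mathsf{oa}}(T)^\dagger\bigr) e^{-J_{\mathsf{o}}^\dagger T},
\]
so the $e^{\pm J_{\mathsf{a}}^\dagger T}$ factors cancel and the correction grows only polynomially. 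A direct check using the explicit form of $e^{-M_i T}$ for each Jordan block $M_i$ of $J_{\mathsf{o}}$ shows $\|D_{\mathsf{o}}(T)^{-1} e^{-J_{\mathsf{o}} T}\| = O(1)$; conjugating the correction by $\tfrac{1}{\sqrt T}D_{\mathsf{o}}(T)^{-1}$ therefore sends it to zero, while Theorem~\ref{thm:pimlimit} yields $\tfrac{1}{T} D_{\mathsf{o}}(T)^{-1} V_{\mathsf{o}}(T) D_{\mathsf{o}}(T)^{-1} \to S_{\mathsf{o}}$ with $S_{\mathsf{o}}$ invertible. Hence $\tfrac{1}{T} D_{\mathsf{o}}^{-1}(V_{\mathsf{s}}/V_{\mathsf{a}})D_{\mathsf{o}}^{-1} \to S_{\mathsf{o}}$ and $(V_{\mathsf{s}}/V_{\mathsf{a}})^{-1} \to 0$ at polynomial rate $O(1/T)$.

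Items~1 and~2 then follow from the remaining entries of $V_{\mathsf{s}}^{-1}$ in the $V_{\mathsf{a}}$-pivot Schur decomposition, namely $(V_{\mathsf{s}}^{-1})_{22} = V_{\mathsf{a}}^{-1} + V_{\mathsf{a}}^{-1} V_{\mathsf{oa}}^\dagger (V_{\mathsf{s}}/V_{\mathsf{a}})^{-1} V_{\mathsf{oa}} V_{\mathsf{a}}^{-1}$ and $(V_{\mathsf{s}}^{-1})_{21} = -V_{\mathsf{a}}^{-1} V_{\mathsf{oa}}^\dagger (V_{\mathsf{s}}/V_{\mathsf{a}})^{-1}$. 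The critical ``buffer cancellation'' is
\[
V_{\mathsf{a}}^{-1} V_{\mathsf{oa}}^\dagger = e^{J_{\mathsf{a}}^\dagger T} M_{\mathsf{a}}(T)^{-1} M_{\mathsf{oa}}(T)^\dagger e^{-J_{\mathsf{o}}^\dagger T},
\]
which is exponentially small since the decaying $e^{J_{\mathsf{a}}^\dagger T}$ beats the polynomial $e^{-J_{\mathsf{o}}^\dagger T}$; combined with $(V_{\mathsf{s}}/V_{\mathsf{a}})^{-1} = O(1/T)$ and $\|V_{\mathsf{a}}^{-1}\|$ exponentially small, both items~1 and~2 follow with exponential rate. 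I expect the main obstacle to be the polynomial-rate work in item~3: verifying the $O(1)$ bound on $D_{\mathsf{o}}^{-1} e^{-J_{\mathsf{o}} T}$ (an explicit but careful Jordan-block computation of the type already carried out in Section~\ref{sec:pfimaginary}) and confirming that the $V_{\mathsf{o}}$ term genuinely dominates $V_{\mathsf{oa}} V_{\mathsf{a}}^{-1} V_{\mathsf{oa}}^\dagger$ after the $\tfrac{1}{\sqrt T} D_{\mathsf{o}}^{-1}$ normalization; the exponential-cancellation arguments for items~1 and~2 are much more robust once the Step~1 normal forms are in hand.
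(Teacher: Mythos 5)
Your proof is correct but takes a genuinely different — and arguably simpler — route from the paper's. You observe that the three items are precisely the three distinct blocks of $V_{\mathsf{s}}^{-1}$ in its $(\mathsf{o},\mathsf{a})$-decomposition and switch to the alternative Schur-complement representation that pivots on $V_{\mathsf{a}}$ rather than on $V_{\mathsf{o}}$ as in~\eqref{eq:schurcompV2}. After writing $V_{\mathsf{a}} = e^{-J_{\mathsf{a}}T}M_{\mathsf{a}}(T)e^{-J_{\mathsf{a}}^\dagger T}$ and $V_{\mathsf{oa}} = e^{-J_{\mathsf{o}}T}M_{\mathsf{oa}}(T)e^{-J_{\mathsf{a}}^\dagger T}$ with $M_{\mathsf{a}},M_{\mathsf{oa}}$ convergent, the combination $V_{\mathsf{a}}^{-1}V_{\mathsf{oa}}^\dagger = e^{J_{\mathsf{a}}^\dagger T}M_{\mathsf{a}}^{-1}M_{\mathsf{oa}}^\dagger e^{-J_{\mathsf{o}}^\dagger T}$ has its interior $e^{\pm J_{\mathsf{a}}T}$ factors algebraically cancelled and is manifestly exponentially decaying (exponential beats polynomial). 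This sidesteps the harder problem the paper faces of controlling $V_{\mathsf{oa}}^\dagger V_{\mathsf{o}}^{-1}$, which is handled there via the buffer machinery of Proposition~\ref{prop:inthreeterms} (insertion of $e^{J_{\mathsf{o}}T}$ factors and the asymptotics of $\widetilde V_{\mathsf{o}}^{-1}e^{J_{\mathsf{o}}T}$). In your $V_{\mathsf{a}}$-pivot formulas, the only step requiring the slow-rate analysis of Theorem~\ref{thm:pimlimit} is $(V_{\mathsf{s}}/V_{\mathsf{a}})^{-1}$ for item~3, and you correctly show the correction $V_{\mathsf{oa}}V_{\mathsf{a}}^{-1}V_{\mathsf{oa}}^\dagger = e^{-J_{\mathsf{o}}T}M_{\mathsf{oa}}M_{\mathsf{a}}^{-1}M_{\mathsf{oa}}^\dagger e^{-J_{\mathsf{o}}^\dagger T}$ is annihilated by the $\tfrac{1}{T}D_{\mathsf{o}}^{-1}(\cdot)D_{\mathsf{o}}^{-1}$ normalization while $V_{\mathsf{o}}$ contributes an invertible limit $S_{\mathsf{o}}$. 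The one point worth writing out carefully is the bound $D_{\mathsf{o}}(T)^{-1}e^{-J_{\mathsf{o}}T}=O(1)$: it is the sign-flipped twin of the explicit Jordan-block computation the paper performs for $D_{\mathsf{o}}(T)^{-1}e^{J_{\mathsf{o}}T}$ at the end of the proof of item~2 of Proposition~\ref{prop:inthreeterms}, and transfers verbatim since $\lambda_i$ is imaginary so $|e^{\pm\lambda_iT}|=1$. With that, items~1 and~2 are exponential decays, item~3 is $O(1/T)$, and the proof is complete.
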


The following result is now an immediate consequence of Theorem~\ref{th:theorem6prop2}:

\begin{corollary}\label{cor:theorem6prop2}
    If $(A, B)$ is controllable and if $A$ only has  eigenvalues with non-positive real parts, then 
    $$
    \lim_{T\to\infty} W_{(A,B)}(T)^{-1} = 0.
    $$
\end{corollary}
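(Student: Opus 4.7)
The plan is to reduce the corollary to Theorem~\ref{th:theorem6prop2} by a similarity transformation that puts $A$ into Jordan normal form. Let $P \in \C^{n\times n}$ be nonsingular with $P^{-1} A P = J$, and set $C := P^{-1}B$. Since by hypothesis no eigenvalue of $A$ has positive real part, we have $J = \diag(J_{\mathsf{o}}, J_{\mathsf{a}})$ with $J_{\mathsf{o}}$ carrying the imaginary eigenvalues and $J_{\mathsf{a}}$ Hurwitz; decompose $C = [C_{\mathsf{o}}; C_{\mathsf{a}}]$ compatibly. Controllability is invariant under similarity, so $(J, C)$ is controllable and the hypotheses of Theorem~\ref{th:theorem6prop2} are met. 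A direct change of variable in the defining integral~\eqref{eq:gramian} (using $e^{-At} = P e^{-Jt} P^{-1}$) gives
\begin{equation*}
    W_{A,B}(T) \;=\; P\, W_{J,C}(T)\, P^\dagger, \qquad \text{hence} \qquad W_{A,B}(T)^{-1} \;=\; (P^{-1})^\dagger\, W_{J,C}(T)^{-1}\, P^{-1}.
\end{equation*}
Since $P$ is a fixed nonsingular matrix, it suffices to show that $W_{J,C}(T)^{-1} \to 0$ as $T \to \infty$.

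For this, I would read off the four blocks of $W_{J,C}(T)^{-1}$ from the Schur complement formula~\eqref{eq:schurcompV2}. The $(\mathsf{a},\mathsf{a})$ block is $(V_{\mathsf{s}}/V_{\mathsf{o}})^{-1}$, which vanishes by item $(1)$ of Theorem~\ref{th:theorem6prop2}. The $(\mathsf{a},\mathsf{o})$ block is $-(V_{\mathsf{s}}/V_{\mathsf{o}})^{-1}V_{\mathsf{oa}}^\dagger V_{\mathsf{o}}^{-1}$, which vanishes by item $(2)$, and its Hermitian conjugate gives the $(\mathsf{o},\mathsf{a})$ block. Finally, the $(\mathsf{o},\mathsf{o})$ block is precisely the quantity appearing in item $(3)$, and therefore also vanishes. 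Combining these four limits yields $W_{J,C}(T)^{-1} \to 0$, and multiplying by the fixed factors $P^{-1}$ and $(P^{-1})^\dagger$ concludes the argument.

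I do not anticipate any real obstacle in carrying out this plan: all of the substantive analytic content—namely, the delicate growth estimates for the submatrices of the Gramian restricted to the center subspace—has already been isolated into Theorem~\ref{th:theorem6prop2}. The corollary is really a matter of bookkeeping: verifying the similarity identity for the Gramian, recognising each of the four blocks of the inverse in the Schur complement expression, and applying items $(1)$--$(3)$ in turn. The only minor point to be careful about is that the off-diagonal block and its conjugate are both controlled by item $(2)$, which is why the statement of the theorem is formulated for $(V_{\mathsf{s}}/V_{\mathsf{o}})^{-1} V_{\mathsf{oa}}^\dagger V_{\mathsf{o}}^{-1}$ rather than its transpose.
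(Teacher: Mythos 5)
Your proposal is correct and matches the paper's own argument essentially verbatim: similarity-transform $A$ into Jordan form so that $W_{A,B}(T)^{-1} = (P^{-1})^\dagger W_{J,C}(T)^{-1} P^{-1}$, then read off the four blocks of $W_{J,C}(T)^{-1}$ from the Schur complement formula~\eqref{eq:schurcompV2} and observe that each vanishes by the three items of Theorem~\ref{th:theorem6prop2}. The paper states this in a single line; your version merely spells out the bookkeeping (identifying each block with the relevant item and noting the off-diagonal blocks are Hermitian conjugates), so the two proofs coincide.
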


\begin{proof}
Using the relation $J = P^{-1} A P$ and $C = P^{-1} B$, Theorem~\ref{th:theorem6prop2}, together with~\eqref{eq:schurcompV2}, we have that $$\lim_{T\to\infty} W_{A, B}(T)^{-1} =\lim_{T\to\infty} (P^{-1})^\dagger W_{J, C}(T)^{-1} P^{-1} = 0.$$ 
\end{proof}

\subsection{Asymptotic convergence of matrix products}\label{ssec:buff}

A major technical challenge in proving Theorem~\ref{th:theorem6prop2} is to deal with the asymptotic behavior of the terms $V_{\mathsf{oa}}^\dagger V_{\mathsf{o}}^{-1}$ and $V_{\mathsf{oa}}^\dagger V_{\mathsf{o}}^{-1} V_{\mathsf{oa}}$. Note that these terms appear {\em only if} the system has both imaginary eigenvalues and eigenvalues with negative real parts.
To elaborate, while one can use Theorem~\ref{thm:pimlimit} to argue that $V_{\mathsf{o}}^{-1}$ vanishes as $T\to\infty$, the term $V_{\mathsf{oa}}$ diverges as $-J_{\mathsf{a}}$ has eigenvalues with positive real parts. Thus, to establish convergence (or divergence) of their product, we cannot rely on the convergence of the individual  terms. 

Even more, as will be clear to see below, only very specific groupings of the terms are so that each product of them converges asymptotically, and defining these groupings requires us to add ``buffer terms'' (e.g., terms such as $e^{J_{\mathsf{o}}T}$, $e^{J^\dagger_{\mathsf{o}}T}$, and $e^{J^\dagger_{\mathsf{a}}T}$ in~\eqref{eq:v21v23} below) between the original terms of product.

This approach to establishing convergence or divergence of matrix products will be used extensively throughout the remainder of the paper.

\begin{proposition}\label{prop:inthreeterms}
The following two items hold: 
\begin{enumerate}
\item We have that
\begin{equation*}
\lim_{T\to\infty} e^{J_{\mathsf{a}}T} (V_{\mathsf{s}}/V_{\mathsf{o}}) e^{J_{\mathsf{a}}^\dagger T} = \int_0^\infty e^{J_{\mathsf{a}}t} C_{\mathsf{a}} C_{\mathsf{a}}^\dagger e^{J_{\mathsf{a}}^\dagger t} \rrd t=W_{-J_{\mathsf{a}},{C_{\mathsf{a}}}}(\infty).
\end{equation*}
\item We define 
\begin{equation}\label{eq:v21v23}
\widetilde V_{\mathsf{o}}  := e^{J_{\mathsf{o}} T}V_{\mathsf{o}}e^{J_{\mathsf{o}}^\dagger T} \quad \mbox{and} \quad \widetilde V_{\mathsf{oa}}  := e^{J_{\mathsf{o}} T}V_{\mathsf{oa}}e^{J_{\mathsf{a}}^\dagger T}.
\end{equation} 
Then, 
\begin{equation}\label{eq:limeJ22V23}
\lim_{T\to\infty} \widetilde V_{\mathsf{oa}}^\dagger  \widetilde V_{\mathsf{o}}^{-1} e^{J_{\mathsf{o}}T} = 0.
\end{equation}
\end{enumerate}
\end{proposition}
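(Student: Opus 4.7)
The plan is to exploit the change of variable $s=T-t$ inside the integrals that define $V_{\mathsf{a}}$, $V_{\mathsf{o}}$, and $V_{\mathsf{oa}}$, after pre- and post-multiplying by suitable matrix exponentials that serve as ``buffer terms.'' Specifically, the factor $e^{J_{\mathsf{a}}T}$ turns the unstable integrand of $V_{\mathsf{a}}$ into a stable one on $[0,T]$, while inserting $e^{-J_{\mathsf{o}}T} e^{J_{\mathsf{o}}T}=I$ between $V_{\mathsf{o}}^{-1}$ and $V_{\mathsf{oa}}$ converts $V_{\mathsf{o}}^{-1}$ into $\widetilde V_{\mathsf{o}}^{-1}$, itself the inverse of a controllability Gramian to which Theorem~\ref{thm:pimlimit} will apply.

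For Item~1, I would split $V_{\mathsf{s}}/V_{\mathsf{o}} = V_{\mathsf{a}} - V_{\mathsf{oa}}^\dagger V_{\mathsf{o}}^{-1} V_{\mathsf{oa}}$ and use the identity $\widetilde V_{\mathsf{o}}^{-1} = e^{-J_{\mathsf{o}}^\dagger T} V_{\mathsf{o}}^{-1} e^{-J_{\mathsf{o}} T}$ to rewrite
$$
e^{J_{\mathsf{a}}T}(V_{\mathsf{s}}/V_{\mathsf{o}})\,e^{J_{\mathsf{a}}^\dagger T} \;=\; e^{J_{\mathsf{a}}T} V_{\mathsf{a}}\, e^{J_{\mathsf{a}}^\dagger T} \;-\; \widetilde V_{\mathsf{oa}}^\dagger\, \widetilde V_{\mathsf{o}}^{-1}\, \widetilde V_{\mathsf{oa}}.
$$
After the substitution $s=T-t$, the first summand becomes $\int_0^T e^{J_{\mathsf{a}}s} C_{\mathsf{a}} C_{\mathsf{a}}^\dagger e^{J_{\mathsf{a}}^\dagger s}\rrd s$, which converges to $W_{-J_{\mathsf{a}},C_{\mathsf{a}}}(\infty)$ by dominated convergence since $J_{\mathsf{a}}$ is Hurwitz; the same change of variable shows $\widetilde V_{\mathsf{oa}} = \int_0^T e^{J_{\mathsf{o}}s} C_{\mathsf{o}} C_{\mathsf{a}}^\dagger e^{J_{\mathsf{a}}^\dagger s}\rrd s$, whose integrand is dominated by polynomial (in $s$) growth from $e^{J_{\mathsf{o}}s}$ times exponential decay from $e^{J_{\mathsf{a}}^\dagger s}$, giving a finite limit. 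To eliminate the cross term I would identify $\widetilde V_{\mathsf{o}}(T) = W_{-J_{\mathsf{o}},C_{\mathsf{o}}}(T)$; since $(-J_{\mathsf{o}},C_{\mathsf{o}})$ is controllable (the Hautus test is insensitive to the sign of $J$) and $-J_{\mathsf{o}}$ has only imaginary eigenvalues, Theorem~\ref{thm:pimlimit}, applied after a diagonal sign-change similarity $\diag(1,-1,1,\ldots)$ per block to restore Jordan normal form, yields $\widetilde V_{\mathsf{o}}(T)^{-1}\to 0$. Because $\widetilde V_{\mathsf{oa}}$ is bounded, the cross term vanishes, establishing the first item.

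For Item~2, since $\widetilde V_{\mathsf{oa}}^\dagger$ is bounded by the above, it suffices to show $\widetilde V_{\mathsf{o}}(T)^{-1} e^{J_{\mathsf{o}}T}\to 0$. Theorem~\ref{thm:pimlimit} gives $\widetilde V_{\mathsf{o}}(T)^{-1} = \tfrac{1}{T} D(T)^{-1}(S^{-1}+o(1)) D(T)^{-1}$, so the task reduces to checking that $D(T)^{-1} e^{J_{\mathsf{o}}T}$ is bounded as $T\to\infty$. This I would verify block by block: in a Jordan block of size $d$ with imaginary eigenvalue, the $(\alpha,\beta)$ entry of $D_i(T)^{-1} e^{M_i T}$ equals $T^{\beta-d}/(\beta-\alpha)!$ up to a unimodular factor for $\beta\geq\alpha$ (and zero otherwise), which is $O(1)$ since $\beta\leq d$. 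Consequently $\widetilde V_{\mathsf{o}}(T)^{-1} e^{J_{\mathsf{o}}T} = O(1/T)\to 0$.

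The main obstacle is the application of Theorem~\ref{thm:pimlimit} to $(-J_{\mathsf{o}},C_{\mathsf{o}})$, which is not literally in Jordan normal form because its superdiagonals consist of $-1$'s; the standard diagonal sign-change similarity corrects this without altering the spectrum or controllability. Beyond this, the proof is a careful bookkeeping exercise: tracking the polynomial orders inside $D(T)^{-1}$, $\widetilde V_{\mathsf{oa}}$, and $e^{J_{\mathsf{o}}T}$ to confirm that the buffer-term regrouping produces convergent products, and verifying that the single factor of $1/T$ coming from $\widetilde V_{\mathsf{o}}^{-1}$ suffices to crush the $O(1)$ remainder in Item~2.
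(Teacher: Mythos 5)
Your proposal is correct and follows essentially the same route as the paper: decompose $V_{\mathsf{s}}/V_{\mathsf{o}}$, change variables $s=T-t$ to identify $e^{J_{\mathsf{a}}T}V_{\mathsf{a}}e^{J_{\mathsf{a}}^\dagger T}$ with $W_{-J_{\mathsf{a}},C_{\mathsf{a}}}(T)$ and $\widetilde V_{\mathsf{o}}(T)$ with $W_{-J_{\mathsf{o}},C_{\mathsf{o}}}(T)$, kill the cross term via $\widetilde V_{\mathsf{o}}^{-1}\to 0$ plus boundedness of $\widetilde V_{\mathsf{oa}}$, and for item~2 sandwich $\widetilde V_{\mathsf{o}}^{-1}e^{J_{\mathsf{o}}T}$ between $\frac{1}{T}D(T)^{-1}$, the normalized limit from Theorem~\ref{thm:pimlimit}, and the bounded factor $D(T)^{-1}e^{J_{\mathsf{o}}T}$. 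The one point you add beyond the paper's exposition is the diagonal sign-change similarity needed to put $-J_{\mathsf{o}}$ literally into Jordan normal form before invoking Theorem~\ref{thm:pimlimit}; this is a genuine hypothesis-checking step the paper leaves implicit, and your fix (noting the similarity commutes with $D(T)$ and preserves controllability) is exactly right.
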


\begin{proof}We prove the two items separately.

\xc{Proof of item~1.} We have that $$e^{J_{\mathsf{a}}T}(V_{\mathsf{s}}/V_{\mathsf{o}})e^{J^\dagger_{\mathsf{a}} T} =e^{J_{\mathsf{a}}T}V_{\mathsf{a}}e^{J^\dagger_{\mathsf{a}} T} - e^{J_{\mathsf{a}}T}V_{\mathsf{oa}}^\dagger V_{\mathsf{o}}^{-1} V_{\mathsf{oa}}e^{J^\dagger_{\mathsf{a}} T}.$$ 
For the first term in the above equation, we use the fact that $V_{\mathsf{a}}(T) = W_{J_{\mathsf{a}},C_{\mathsf{a}}}(T)$ and obtain that
\begin{multline*}
\lim_{T\to\infty} e^{J_{\mathsf{a}} T} V_{\mathsf{a}} e^{J_{\mathsf{a}}^\dagger T} = \lim_{T\to\infty}\int_0^T e^{J_{\mathsf{a}}(T - t)} C_{\mathsf{a}} C_{\mathsf{a}}^\dagger e^{J_{\mathsf{a}}^\dagger (T - t)} \rrd t\\
=\lim_{T\to\infty}  \int_0^T e^{J_{\mathsf{a}}t'} C_{\mathsf{a}} C_{\mathsf{a}}^\dagger e^{J_{\mathsf{a}}^\dagger t'} \rrd t' = W_{-J_{\mathsf{a}}, C_{\mathsf{a}}}(\infty),
\end{multline*}
where we set $t':=T-t$. For the second term,  we write that  
$$
 e^{J_{\mathsf{a}}T} V_{\mathsf{oa}}^\dagger V_{\mathsf{o}}^{-1} V_{\mathsf{oa}} e^{J_{\mathsf{a}}^\dagger T}  = 
 \left [e^{J_{\mathsf{a}}T} V_{\mathsf{oa}}^\dagger e^{J_{\mathsf{o}}^\dagger T}\right ] \left [e^{J_{\mathsf{o}} T} V_{\mathsf{o}} e^{J_{\mathsf{o}}^\dagger T}\right]^{-1} \left [ e^{J_{\mathsf{o}}T} V_{\mathsf{oa}} e^{J_{\mathsf{a}}^\dagger T} \right ]   = 
 \widetilde V_{\mathsf{oa}}^\dagger \widetilde V_{\mathsf{o}}^{-1} \widetilde V_{\mathsf{oa}}. 
$$
If we show that $\widetilde V_{\mathsf{oa}}^\dagger \widetilde V_{\mathsf{o}}^{-1} \widetilde V_{\mathsf{oa}}$ vanishes as $T\to\infty$, then item~1 is established. 
We do so by showing that 
\begin{enumerate}
\item[$(i)$] $\widetilde V_{\mathsf{o}}^{-1}$ vanishes as $T\to\infty$; and
\item[$(ii)$] $\widetilde V_{\mathsf{oa}}$ is bounded as $T\to\infty$. 
\end{enumerate}
\begin{description}
\item[\it Proof of $(i)$.] For the term  $\widetilde V_{\mathsf{o}}$, we can use the same change of variable ($t'= T -t$) as above to express it as follows:  
\begin{equation}\label{eq:vtilde21}
\widetilde V_{\mathsf{o}}(T) = \int_0^T e^{J_{\mathsf{o}} (T - t)} C_{\mathsf{o}} C_{\mathsf{o}}^\dagger e^{J_{\mathsf{o}}^\dagger (T -t)}\rrd t = W_{-J_{\mathsf{o}}, C_{\mathsf{o}}}(T).  
\end{equation}
We need to appeal to  Theorem~\ref{thm:pimlimit} to evaluate the limit of its inverse. To this end,  we  decompose $J_{\mathsf{o}}$ into its Jordan blocks similarly to what was done in~\eqref{eq:defL}:  
\begin{equation}\label{eq:decompJ21}
J_{\mathsf{o}} = \diag(M_1,\ldots,M_{k}),
\end{equation} 
where each $M_i$ is a Jordan block of dimensions $d_i$, for $i=1,\ldots, k$. 
Let $D(T) \in \R^{n_{\mathsf{o}}\times n_{\mathsf{o}}}$ be defined in the same way as~\eqref{eq:defbigD}, which we reproduce here: 
\begin{equation}\label{eq:defDTJ21}
D(T) = \diag(D_1(T),\ldots, D_{k}(T)), \quad \mbox{with } 
D_i =\diag(T^{d_i-1},\ldots, 1). 
\end{equation}  
Because $(-J_{\mathsf{o}}, C_{\mathsf{o}})$ is controllable, with the eigenvalues of $-J_{\mathsf{o}}$ being imaginary, Theorem~\ref{thm:pimlimit} states that 
\begin{equation}\label{eq:v21dt-1}
\lim_{T\to \infty} \frac{1}{T} D(T)^{-1} \widetilde V_{\mathsf{o}}(T) D(T)^{-1} = S,
\end{equation}
for some nonsingular matrix $S$.
This implies that
\begin{equation}\label{eq:v21-1}
\lim_{T\to\infty} \widetilde V_{\mathsf{o}}(T)^{-1} = \lim_{T\to\infty} \frac{1}{T} D(T)^{-1}S^{-1}D(T)^{-1} = 0.
\end{equation}

\item[\it Proof of $(ii)$.] For the term $\widetilde V_{\mathsf{oa}}$, we have
\begin{equation}\label{eq:vtilde23}
\widetilde V_{\mathsf{oa}}  = \int_{0}^T e^{J_{\mathsf{o}} (T - t)} C_{\mathsf{o}} C_{\mathsf{a}}^\dagger e^{J_{\mathsf{a}}^\dagger (T -t)} \rrd t = \int_0^T e^{J_{\mathsf{o}} t} C_{\mathsf{o}} C_{\mathsf{a}}^\dagger e^{J_{\mathsf{a}}^\dagger t} \rrd t.  
\end{equation}
It is not too hard to see that  $\widetilde V_{\mathsf{oa}}$  remains bounded as $T\to\infty$; indeed, the entries of $e^{J_{\mathsf{a}}^\dagger t}$ decay exponentially fast while the entries of $e^{J_{\mathsf{o}}t}$ grow at most polynomially. 
\end{description}
We conclude that 
$$\lim_{T\to\infty} \widetilde V_{\mathsf{oa}}^\dagger \widetilde V_{\mathsf{o}}^{-1} \widetilde V_{\mathsf{oa}} = 0.$$ 
This completes the proof of the first item.

\xc{Proof of item~2.}
As argued in the proof of $(ii)$ in the first item, $\widetilde V_{\mathsf{oa}}$ is bounded in the limit. 
It thus suffices to show that 
\begin{equation}\label{eq:v21j2t}
\lim_{T\to\infty}\widetilde V_{\mathsf{o}}^{-1} e^{J_{\mathsf{o}}T} = 0.
\end{equation}
To this end, recall the decomposition of $J_{\mathsf{o}}$ and the expression of $D(T)$ given in~\eqref{eq:decompJ21} and~\eqref{eq:defDTJ21}, respectively. 
Now, we write 
\begin{equation}\label{eq:anotherthreeterms}
\widetilde V_{\mathsf{o}}^{-1} e^{J_{\mathsf{o}} T} = \left [ TD(T)\right ]^{-1} \left [\frac{1}{T} D(T)^{-1} \widetilde V_{\mathsf{o}}(T) D(T)^{-1} \right ]^{-1} \left [ D(T)^{-1} e^{J_{\mathsf{o}} T}\right ], 
\end{equation}
The first term on the right hand side of~\eqref{eq:anotherthreeterms} converges to $0$ as $T\to\infty$. The second term is bounded by Theorem~\ref{thm:pimlimit} (see also~\eqref{eq:v21dt-1}). 
It is thus enough to show that the last term $D(T)^{-1} e^{J_{\mathsf{o}}T}$ is bounded in the limit to establish~\eqref{eq:v21j2t}.   
But this follows straightforwardly by computation; indeed,  
$$
D_i(T)^{-1} e^{M_i T} = e^{\lambda_i T}
\begin{bmatrix}
    \nicefrac{1}{T^{d_i - 1}} & \nicefrac{1}{T^{d_i - 2}} & \nicefrac{1}{2T^{d_i - 3}}  & \cdots & \nicefrac{1}{(d_i - 1)!}\\
    0 & \nicefrac{1}{T^{d_i - 2}} & \nicefrac{1}{T^{d_i - 3}} & \cdots & \nicefrac{1}{(d_i - 2)!} \\
    \vdots & & \ddots & \ddots & \vdots \\
    0 & 0 &  & \nicefrac{1}{T} & 1 \\
    0 & 0 & \cdots  & 0 & 1 
\end{bmatrix}.
$$ 
Since $\lambda_i$ is  imaginary, we have that every entry of $D_i(T)^{-1} e^{M_i T}$ is absolutely bounded in the limit as $T \to \infty$. 
\end{proof}

\subsection{Proof of Theorem~\ref{th:theorem6prop2}}

We establish the three items of the theorem. 

\xc{Proof of item~1.} 
Since the pair $(-J_{\mathsf{a}},C_{\mathsf{a}})$ is controllable and since $J_{\mathsf{a}}$ is Hurwitz,  $W_{-J_{\mathsf{a}},C_{\mathsf{a}}}(\infty)$ is a bounded matrix of full rank. 
Thus, there exists a constant $\alpha > 0$ such that $W_{-J_{\mathsf{a}},C_{\mathsf{a}}}(\infty) \geq \alpha I$. 
By item~1 of Proposition~\ref{prop:inthreeterms}, we can find a sufficiently large 
 $T_\alpha$  such that 
$$e^{J_{\mathsf{a}}T} (V_{\mathsf{s}}/V_{\mathsf{o}}) e^{J_{\mathsf{a}}^\dagger T} \geq \frac{\alpha}{2} I \quad \mbox{for all } T \geq T_\alpha,$$ 
and hence, 
$$V_{\mathsf{s}}/V_{\mathsf{o}} \geq \frac{\alpha}{2} e^{-J_{\mathsf{a}}T}e^{-J_{\mathsf{a}}^\dagger T} \quad \Rightarrow \quad
\lim_{T\to\infty}(V_{\mathsf{s}}/V_{\mathsf{o}})^{-1} \leq \lim_{T\to\infty}\frac{2}{\alpha} e^{J_{\mathsf{a}}^\dagger T}e^{J_{\mathsf{a}} T} = 0.$$

\xc{Proof of item~2.} 
 We write
\begin{equation}\label{eq:threeterms}
(V_{\mathsf{s}}/V_{\mathsf{o}})^{-1} V_{\mathsf{oa}}^\dagger V_{\mathsf{o}}^{-1}  = e^{J_{\mathsf{a}}^\dagger T} 
\left [ e^{J_{\mathsf{a}} T} (V_{\mathsf{s}}/V_{\mathsf{o}}) e^{J_{\mathsf{a}}^\dagger T} \right ]^{-1} \left [ e^{J_{\mathsf{a}}T} V_{\mathsf{oa}}^\dagger  V_{\mathsf{o}}^{-1} \right ].
\end{equation}
The first term on the right hand side  of~\eqref{eq:threeterms} is zero in the limit since $J_{\mathsf{a}}$ is Hurwitz. The second term is bounded by item~1 of Proposition~\ref{prop:inthreeterms}.
For the last term, we note that
\begin{equation}\label{eq:eJ22V23}
e^{J_{\mathsf{a}} T} V_{\mathsf{oa}}^\dagger V_{\mathsf{o}}^{-1}=  \widetilde V_{\mathsf{oa}}^\dagger  \widetilde V_{\mathsf{o}}^{-1} e^{J_{\mathsf{o}}T},
\end{equation}
which vanishes in the limit as $T \to \infty$ by item~2 of Proposition~\ref{prop:inthreeterms}.

\xc{Proof of item~3.} 
Recall that $V_{\mathsf{o}}(T) = W_{J_{\mathsf{o}}, C_{\mathsf{o}}}(T)$, with all the eigenvalues of $J_{\mathsf{o}}$ being imaginary.  
Thus, using Theorem~\ref{thm:pimlimit} and the arguments given in item~1 of Proposition~\ref{prop:inthreeterms} to show~\eqref{eq:v21-1},  we similarly obtain that $V_{\mathsf{o}}(T)^{-1}$ vanishes as $T\to\infty$. Next, we use~\eqref{eq:eJ22V23} to rewrite 
\begin{multline}\label{eq:thirdthreeterms}
V_{\mathsf{o}}^{-1} V_{\mathsf{oa}}(V_{\mathsf{s}}/V_{\mathsf{o}})^{-1} V_{\mathsf{oa}}^\dagger V_{\mathsf{o}}^{-1} \\ 
\hspace{2.2cm} = \left [V_{\mathsf{o}}^{-1} V_{\mathsf{oa}}e^{J^\dagger_{\mathsf{a}}T}\right]\left [e^{-J^\dagger_{\mathsf{a}}T}(V_{\mathsf{s}}/V_{\mathsf{o}})^{-1} e^{-J_{\mathsf{a}}T}\right ] \left [e^{J_{\mathsf{a}}T} V_{\mathsf{oa}}^\dagger V_{\mathsf{o}}^{-1} \right ]\\ 
=  \left [ e^{J_{\mathsf{o}}^\dagger T}   \widetilde V_{\mathsf{o}}^{-1} \widetilde V_{\mathsf{oa}} \right ]\left [ e^{J_{\mathsf{a}} T}(V_{\mathsf{s}}/V_{\mathsf{o}}) e^{J_{\mathsf{a}}^\dagger T} \right ]^{-1} \left [ \widetilde V_{\mathsf{oa}}^\dagger  \widetilde V_{\mathsf{o}}^{-1} e^{J_{\mathsf{o}}T} \right ].
\end{multline}
The first and the last terms on the right hand side of~\eqref{eq:thirdthreeterms} vanish as $T\to\infty$ by item~2 of Proposition~\ref{prop:inthreeterms}, and the second term is bounded by item~1 of Proposition~\ref{prop:inthreeterms}. \hfill $\qed$

\section{Proof of Theorem~\ref{thm:main}}\label{sec:pfgeneral}
Having dealt with the cases of imaginary and non-positive eigenvalues, we are now ready to prove  the general case of Theorem~\ref{thm:main}.
Recall that $J = P^{-1} A P$ is in  Jordan normal form, $C = P^{-1}B$, and the decompositions $J = \diag(J_{\mathsf{u}}, J_{\mathsf{s}}) $ and $C =[C_{\mathsf{u}}; C_{\mathsf{s}}]$, where the eigenvalues of $J_{\mathsf{u}}$ (resp. $J_{\mathsf{s}}$) have positive (resp. non-positive) real parts.

Consider the controllability Gramian associated with the pair $(J, C)$ and, for ease of notation, we use $V(T)$ as a shorthand notation:   
$$
V(T) := W_{J,C}(T) = \int_0^T e^{-J t} CC^\dagger e^{-J^\dagger t} \rrd t. 
$$

Similar to what has been done in the previous section, we partition the matrix $V(T)$ into $4$ blocks:
$$
V(T) = 
\begin{bmatrix}
V_{\mathsf{u}}(T) & V_{\mathsf{us}}(T) \\
V_{\mathsf{us}}(T)^\dagger & V_{\mathsf{s}}(T) 
\end{bmatrix},
$$
where
\begin{equation}\label{eq:defVij}
    \begin{aligned}
& V_{*}(T):= W_{J_*,C_*}(T) =  \int_0^T e^{-J_* t} C_* C^\dagger_* e^{-J^\dagger_* t} \rrd t , \quad \mbox{for } * = \mathsf{u},\mathsf{s}, \\
& V_{\mathsf{u}\mathsf{s}}(T):= \int_0^T e^{-J_{\mathsf{u}} t} C_{\mathsf{u}} C^\dagger_{\mathsf{s}} e^{-J_\mathsf{s}^\dagger t} \rrd t. 
\end{aligned}
\end{equation}
By definition, $\lim_{T\to\infty}V_{\mathsf{u}}(T) = W_{\hspace{-.04cm}\mathsf{u}}$, where $W_{\hspace{-.04cm}\mathsf{u}}$ is defined in~\eqref{eq:defW1}. 
Since $(J, C)$ is controllable, so are $(J_*, C_*)$ for $* = \mathsf{u},\mathsf{s}$. Thus, the matrices $V(T)$, $V_{\mathsf{u}}(T)$, and $V_{\mathsf{s}}(T)$ are nonsingular. 
Now, let $$V/V_{\mathsf{s}}:= V_{\mathsf{u}} - V_{\mathsf{us}}V_{\mathsf{s}}^{-1}V_{\mathsf{us}}^\dagger$$ 
be the Schur complement of $V_{\mathsf{s}}$ in $V$ (we again omit the dependence on $T$). 
The inverse of $V$ can be written explicitly as
\begin{equation}\label{eq:Vinverse}
V^{-1} = 
\begin{bmatrix}
(V/V_{\mathsf{s}})^{-1} & - (V/V_{\mathsf{s}})^{-1} V_{\mathsf{us}} V_{\mathsf{s}}^{-1} \\
- V_{\mathsf{s}}^{-1} V_{\mathsf{us}}^\dagger (V/V_{\mathsf{s}})^{-1}  & V_{\mathsf{s}}^{-1}+V_{\mathsf{s}}^{-1} V_{\mathsf{us}}^\dagger (V/V_{\mathsf{s}})^{-1} V_{\mathsf{us}} V_{\mathsf{s}}^{-1}
\end{bmatrix}.
\end{equation}

The proof of Theorem~\ref{thm:main} relies on the following technical result:

\begin{theorem}\label{thm:limit0}
The following items hold for the blocks $V_*(T)$, for $* = \mathsf{u},\mathsf{s},\mathsf{us}$, given in~\eqref{eq:defVij}: 
    \begin{enumerate}
    \item $\lim_{T\to\infty} V_{\mathsf{s}}(T)^{-1} = 0$. 
    \item $\lim_{T\to\infty} V_{\mathsf{us}}(T) V_{\mathsf{s}}(T)^{-1} = 0$.  
    \item $\lim_{T\to\infty} V_{\mathsf{us}}(T)V_{\mathsf{s}}(T)^{-1} V_{\mathsf{us}}(T)^\dagger = 0$. 
    \end{enumerate}
\end{theorem}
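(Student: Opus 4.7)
The plan is to reduce item~1 to results already at our disposal and then attack items~2 and~3 jointly by a Schur-complement expansion combined with the buffer-term technique developed in Section~\ref{sec:pfnonpositive}. For item~1, I would simply invoke Corollary~\ref{cor:theorem6prop2}: since $(J,C)$ is controllable and $J=\diag(J_{\mathsf{u}},J_{\mathsf{s}})$ has its two diagonal blocks with disjoint spectra, the subpair $(J_{\mathsf{s}},C_{\mathsf{s}})$ is controllable, and $J_{\mathsf{s}}$ has only eigenvalues with non-positive real parts, so $V_{\mathsf{s}}(T)^{-1}\to 0$ is immediate.

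For items~2 and~3, I would refine the partition, writing $V_{\mathsf{us}}=[V_{\mathsf{uo}},V_{\mathsf{ua}}]$ in accordance with $J_{\mathsf{s}}=\diag(J_{\mathsf{o}},J_{\mathsf{a}})$, and expand $V_{\mathsf{s}}^{-1}$ via the Schur-complement formula~\eqref{eq:schurcompV2}. Upon multiplying out, $V_{\mathsf{us}}V_{\mathsf{s}}^{-1}$ and $V_{\mathsf{us}}V_{\mathsf{s}}^{-1}V_{\mathsf{us}}^\dagger$ become finite sums of products whose outer factors are either $V_{\mathsf{uo}}$ --- bounded in $T$ since its integrand $e^{-J_{\mathsf{u}} t}C_{\mathsf{u}} C_{\mathsf{o}}^\dagger e^{-J_{\mathsf{o}}^\dagger t}$ decays exponentially --- or $V_{\mathsf{ua}}$, and whose inner factors are precisely the three blocks of $V_{\mathsf{s}}^{-1}$ whose limits are supplied by Theorem~\ref{th:theorem6prop2}. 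Every term with a $V_{\mathsf{uo}}$ outer factor thus vanishes immediately; the real work is to control the terms with $V_{\mathsf{ua}}$ as an outer factor, since $V_{\mathsf{ua}}$ itself can diverge exponentially in $T$.

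The key lemma I would establish is that $V_{\mathsf{ua}}(T)e^{J_{\mathsf{a}}^\dagger T}\to 0$ as $T\to\infty$. Starting from
\begin{equation*}
V_{\mathsf{ua}}(T)e^{J_{\mathsf{a}}^\dagger T}=\int_0^T e^{-J_{\mathsf{u}} t}C_{\mathsf{u}} C_{\mathsf{a}}^\dagger e^{J_{\mathsf{a}}^\dagger(T-t)}\rrd t
\end{equation*}
and using that $J_{\mathsf{u}}$ has spectrum with strictly positive real part while $J_{\mathsf{a}}$ is Hurwitz, the integrand is dominated by $p(t)\,p(T-t)\,e^{-\alpha t}e^{-\gamma(T-t)}$ for some $\alpha,\gamma>0$ and some polynomial $p$; evaluating the resulting integral gives a bound of order $e^{-\min(\alpha,\gamma)T}$ times a polynomial in $T$, which vanishes. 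Equipped with this lemma, I refactor each offending product by inserting buffer terms, for instance
\begin{equation*}
V_{\mathsf{ua}}(V_{\mathsf{s}}/V_{\mathsf{o}})^{-1}=\bigl[V_{\mathsf{ua}}\,e^{J_{\mathsf{a}}^\dagger T}\bigr]\bigl[e^{J_{\mathsf{a}} T}(V_{\mathsf{s}}/V_{\mathsf{o}})e^{J_{\mathsf{a}}^\dagger T}\bigr]^{-1}e^{J_{\mathsf{a}} T},
\end{equation*}
where the first bracket vanishes by the key lemma, the middle bracket tends to the invertible matrix $W_{-J_{\mathsf{a}},C_{\mathsf{a}}}(\infty)^{-1}$ by item~1 of Proposition~\ref{prop:inthreeterms}, and $e^{J_{\mathsf{a}} T}\to 0$. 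The remaining offending products $V_{\mathsf{ua}}(V_{\mathsf{s}}/V_{\mathsf{o}})^{-1}V_{\mathsf{oa}}^\dagger V_{\mathsf{o}}^{-1}$ and $V_{\mathsf{ua}}(V_{\mathsf{s}}/V_{\mathsf{o}})^{-1}V_{\mathsf{ua}}^\dagger$ are treated identically, with the trailing $e^{J_{\mathsf{a}} T}$ replaced respectively by $e^{J_{\mathsf{a}} T}V_{\mathsf{oa}}^\dagger V_{\mathsf{o}}^{-1}=\widetilde V_{\mathsf{oa}}^\dagger\widetilde V_{\mathsf{o}}^{-1}e^{J_{\mathsf{o}} T}\to 0$ (item~2 of Proposition~\ref{prop:inthreeterms}) and by $e^{J_{\mathsf{a}} T}V_{\mathsf{ua}}^\dagger$, which vanishes by the adjoint of the key lemma.

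As in the preceding sections, the main obstacle here is bookkeeping rather than deep analysis: among the many formally equivalent ways of factoring each product, only very specific insertions of the buffer matrices $e^{\pm J_{\mathsf{a}} T}$ and $e^{\pm J_{\mathsf{o}} T}$ produce factorings in which every piece has a well-defined limit (either zero or bounded). Once those factorings are identified --- guided by the asymptotic statements of Proposition~\ref{prop:inthreeterms} and Theorem~\ref{th:theorem6prop2} --- each residual term reduces to a routine ``small times bounded'' product.
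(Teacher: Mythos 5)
Your plan follows the paper's proof essentially verbatim: item~1 via Corollary~\ref{cor:theorem6prop2}, the refined partition $V_{\mathsf{us}}=[V_{\mathsf{uo}},V_{\mathsf{ua}}]$ together with the Schur-complement expansion~\eqref{eq:schurcompV2}, the same key lemma $\widetilde V_{\mathsf{ua}}=V_{\mathsf{ua}}e^{J_{\mathsf{a}}^\dagger T}\to 0$ (the paper's Lemma~\ref{lem:tildev32}), and the same buffer factorizations (the paper's~\eqref{eq:fourththreeterms},~\eqref{eq:v32221-1}, and~\eqref{eq:4V323}) for the three $V_{\mathsf{ua}}$-leading products. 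The plan is correct and I see no material deviation from the paper's argument.
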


Item~1 of Theorem~\ref{thm:limit0} follows directly from Corollary~\ref{cor:theorem6prop2}; indeed, by its definition~\eqref{eq:defVij}, $V_{\mathsf{s}}(T)$ is the controllability Gramian of the controllable pair $(J_{\mathsf{s}}, C_{\mathsf{s}})$, and $J_{\mathsf{s}}$ has only eigenvalues with non-positive real parts.  
We establish the last two items of Theorem~\ref{thm:limit0} in the following subsections. Before that, we prove
Theorem~\ref{thm:main}, which is a corollary of Theorem~\ref{thm:limit0}:

\begin{proof}[Proof of Theorem~\ref{thm:main}]
We first use~\eqref{eq:Vinverse} to show that 
\begin{equation}\label{eq:limWjcTh1B}
\lim_{T\to\infty} V(T)^{-1} = \lim_{T\to \infty} W_{J,C}(T)^{-1}=
\begin{bmatrix}
W_{\hspace{-.04cm}\mathsf{u}}^{-1} & 0 \\
0 & 0 
\end{bmatrix}.
\end{equation}
By item~3 of Theorem~\ref{thm:limit0}, we have that
\begin{equation}\label{eq:limitvv2}
\lim_{T\to\infty} V/V_{\mathsf{s}} = \lim_{T\to\infty} \left ( V_{\mathsf{u}} - V_{\mathsf{us}}V_{\mathsf{s}}^{-1}V_{\mathsf{us}}^\dagger \right ) = \lim_{T\to\infty} V_{\mathsf{u}} = W_{\hspace{-.04cm}\mathsf{u}}.
\end{equation}
Using~\eqref{eq:limitvv2} and item~2, we have that every term in the following product exists in  the limit:
$$
\lim_{T \to \infty}(V/V_{\mathsf{s}})^{-1} V_{\mathsf{us}} V_{\mathsf{s}}^{-1} = W_{\hspace{-.04cm}\mathsf{u}}^{-1} \lim_{T \to \infty}  (V_{\mathsf{us}}V_{\mathsf{s}}^{-1}) = 0.
$$
Similarly, using~\eqref{eq:limitvv2} and items~1 and~2, we have that
\begin{align*}
\lim_{T\to \infty} \left ( V_{\mathsf{s}}^{-1}+V_{\mathsf{s}}^{-1} V_{\mathsf{us}}^\dagger (V/V_{\mathsf{s}})^{-1} V_{\mathsf{us}} V_{\mathsf{s}}^{-1} \right ) &= \lim_{T \to \infty}  V_{\mathsf{s}}^{-1} V_{\mathsf{us}}^\dagger W_{\hspace{-.04cm}\mathsf{u}}^{-1} V_{\mathsf{us}} V_{\mathsf{s}}^{-1}=0.
\end{align*}
Put together in~\eqref{eq:Vinverse}, the above yields~\eqref{eq:limWjcTh1B}. 
Theorem~\ref{thm:main} then follows from~\eqref{eq:limWjcTh1B} and the fact that $W_{A, B}(T)^{-1} = (P^{-1})^\dagger  W_{J, C}(T)^{-1} P^{-1}$.
\end{proof}

\subsection{Proof of item~2 of Theorem~\ref{thm:limit0}} 
We express $V_{\mathsf{us}}$ as
\begin{align*}
V_{\mathsf{us}}  = \int_0^T e^{-J_{\mathsf{u}} t} C_{\mathsf{u}} C^\dagger_{\mathsf{s}} e^{-J_\mathsf{s}^\dagger t} \rrd t  = 
\begin{bmatrix}
V_{\mathsf{uo}} & V_{\mathsf{ua}}
\end{bmatrix},
\end{align*}
where $V_{\mathsf{uo}}$ and $V_{\mathsf{ua}}$ are given by (using $C_{\mathsf{s}}^\dagger=\begin{bmatrix} C_{\mathsf{o}}^\dagger& C_{\mathsf{a}}^\dagger \end{bmatrix}$)
\begin{equation}\label{eq:defv3132}
\begin{aligned}
V_{\mathsf{uo}} & :=\int_0^T e^{-J_{\mathsf{u}} t} C_{\mathsf{u}} C_{\mathsf{o}}^\dagger e^{-J_{\mathsf{o}}^\dagger t} \rrd t, \\ 
V_{\mathsf{ua}} & :=\int_0^T e^{-J_{\mathsf{u}} t} C_{\mathsf{u}} C_{\mathsf{a}}^\dagger e^{-J_{\mathsf{a}}^\dagger t} \rrd t.
\end{aligned}
\end{equation}
Using the expression for $V_{\mathsf{s}}^{-1}$ given in~\eqref{eq:schurcompV2}, we have that
$$
V_{\mathsf{us}} V_{\mathsf{s}}^{-1} = 
\begin{bmatrix}
V_{\mathsf{uo}} & V_{\mathsf{ua}}
\end{bmatrix}
\begin{bmatrix}
 V_{\mathsf{o}}^{-1}+V_{\mathsf{o}}^{-1} V_{\mathsf{oa}} (V_{\mathsf{s}}/V_{\mathsf{o}})^{-1} V_{\mathsf{oa}}^\dagger V_{\mathsf{o}}^{-1} & -V_{\mathsf{o}}^{-1} V_{\mathsf{oa}} (V_{\mathsf{s}}/V_{\mathsf{o}})^{-1}   \\
- (V_{\mathsf{s}}/V_{\mathsf{o}})^{-1}  V_{\mathsf{oa}}^\dagger V_{\mathsf{o}}^{-1}  & (V_{\mathsf{s}}/V_{\mathsf{o}})^{-1}
\end{bmatrix}.
$$
We then decompose
\begin{equation*}
V_{\mathsf{us}} V_{\mathsf{s}}^{-1}  = 
\begin{bmatrix}
\left (V_{\mathsf{us}} V_{\mathsf{s}}^{-1}\right )_1 & \left (V_{\mathsf{us}} V_{\mathsf{s}}^{-1}\right )_2 
\end{bmatrix},
\end{equation*}
where 
\begin{align}\label{eq:twoterms}
\left (V_{\mathsf{us}} V_{\mathsf{s}}^{-1}\right )_1&:= V_{\mathsf{uo}}(V_{\mathsf{o}}^{-1}+V_{\mathsf{o}}^{-1} V_{\mathsf{oa}} (V_{\mathsf{s}}/V_{\mathsf{o}})^{-1} V_{\mathsf{oa}}^\dagger V_{\mathsf{o}}^{-1}) - V_{\mathsf{ua}} (V_{\mathsf{s}}/V_{\mathsf{o}})^{-1}  V_{\mathsf{oa}}^\dagger V_{\mathsf{o}}^{-1}, \\
\left (V_{\mathsf{us}} V_{\mathsf{s}}^{-1}\right )_2&:= - V_{\mathsf{uo}} V_{\mathsf{o}}^{-1} V_{\mathsf{oa}} (V_{\mathsf{s}}/V_{\mathsf{o}})^{-1} + V_{\mathsf{ua}} (V_{\mathsf{s}}/V_{\mathsf{o}})^{-1}. \label{eq:defv3v212p2}
\end{align}

We establish item~2 of Theorem~\ref{thm:limit0} by proving the following proposition:
\begin{proposition}\label{prop:item2thm6}
    The following items hold:
    \begin{enumerate}
    \item $\lim_{T\to\infty} \left(V_{\mathsf{us}} V_{\mathsf{s}}^{-1}\right )_1 =0$.
    \item $\lim_{T\to\infty} \left(V_{\mathsf{us}} V_{\mathsf{s}}^{-1}\right )_2 =0$.
    \end{enumerate}
\end{proposition}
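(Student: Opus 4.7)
The plan is to reduce both items to three building-block facts and then apply the buffer-term grouping technique already used in the proof of Theorem~\ref{th:theorem6prop2}. The three facts I will establish are: (i) $V_{\mathsf{uo}}(T)$ converges as $T \to \infty$ and hence is bounded; (ii) $V_{\mathsf{ua}}(T)\, e^{J_{\mathsf{a}}^\dagger T}$ is uniformly bounded in $T$; and (iii) $V_{\mathsf{o}}(T)^{-1} V_{\mathsf{oa}}(T)\, e^{J_{\mathsf{a}}^\dagger T} \to 0$. With these in hand, every occurrence of $(V_{\mathsf{s}}/V_{\mathsf{o}})^{-1}$ is handled by inserting the buffer $e^{J_{\mathsf{a}}^\dagger T} e^{-J_{\mathsf{a}}^\dagger T}$ so as to expose the group $[e^{J_{\mathsf{a}}T}(V_{\mathsf{s}}/V_{\mathsf{o}})e^{J_{\mathsf{a}}^\dagger T}]^{-1}$, which is bounded (converging to $W_{-J_{\mathsf{a}},C_{\mathsf{a}}}(\infty)^{-1}$) by item~1 of Proposition~\ref{prop:inthreeterms}.

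Fact~(i) is immediate: the integrand defining $V_{\mathsf{uo}}$ pairs the exponentially decaying $e^{-J_{\mathsf{u}}t}$ against the polynomially growing $e^{-J_{\mathsf{o}}^\dagger t}$. For~(ii), the substitution $s = T - t$ yields
\begin{equation*}
V_{\mathsf{ua}}\, e^{J_{\mathsf{a}}^\dagger T} = \int_0^T e^{-J_{\mathsf{u}}(T-s)} C_{\mathsf{u}} C_{\mathsf{a}}^\dagger e^{J_{\mathsf{a}}^\dagger s}\,\rrd s.
\end{equation*}
Since $\|e^{-J_{\mathsf{u}}(T-s)}\|$ is bounded by a polynomial in $T-s$ times $e^{-\sigma(T-s)}$ with $\sigma := \min\{\mathrm{Re}(\lambda) : \lambda \in \spec(J_{\mathsf{u}})\} > 0$, and hence is bounded uniformly in its argument, and since $\|e^{J_{\mathsf{a}}^\dagger s}\|$ is bounded by a polynomial in $s$ times $e^{-\rho s}$ with $\rho := -\max\{\mathrm{Re}(\lambda) : \lambda \in \spec(J_{\mathsf{a}})\} > 0$, hence integrable on $[0,\infty)$, pulling out the supremum of the $(T-s)$-dependent factor yields a bound uniform in $T$. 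For~(iii), the definitions $\widetilde V_{\mathsf{o}} = e^{J_{\mathsf{o}}T} V_{\mathsf{o}} e^{J_{\mathsf{o}}^\dagger T}$ and $\widetilde V_{\mathsf{oa}} = e^{J_{\mathsf{o}}T} V_{\mathsf{oa}} e^{J_{\mathsf{a}}^\dagger T}$ from~\eqref{eq:v21v23} give
\begin{equation*}
V_{\mathsf{o}}^{-1} V_{\mathsf{oa}}\, e^{J_{\mathsf{a}}^\dagger T} = e^{J_{\mathsf{o}}^\dagger T}\widetilde V_{\mathsf{o}}^{-1}\widetilde V_{\mathsf{oa}}.
\end{equation*}
Since $\widetilde V_{\mathsf{o}}$ is Hermitian (as a controllability Gramian, cf.~\eqref{eq:vtilde21}), this expression is the Hermitian adjoint of $\widetilde V_{\mathsf{oa}}^\dagger \widetilde V_{\mathsf{o}}^{-1} e^{J_{\mathsf{o}}T}$, which vanishes by~\eqref{eq:limeJ22V23}.

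For item~1, split $(V_{\mathsf{us}}V_{\mathsf{s}}^{-1})_1$ in~\eqref{eq:twoterms} into its two pieces. The first piece is $V_{\mathsf{uo}}$ times the bracketed quantity, which tends to $0$ by~(i) combined with item~3 of Theorem~\ref{th:theorem6prop2}. The second piece, after buffering, becomes
\begin{equation*}
V_{\mathsf{ua}}(V_{\mathsf{s}}/V_{\mathsf{o}})^{-1} V_{\mathsf{oa}}^\dagger V_{\mathsf{o}}^{-1} = [V_{\mathsf{ua}}\, e^{J_{\mathsf{a}}^\dagger T}]\,[e^{J_{\mathsf{a}}T}(V_{\mathsf{s}}/V_{\mathsf{o}})e^{J_{\mathsf{a}}^\dagger T}]^{-1}\,[e^{J_{\mathsf{a}}T} V_{\mathsf{oa}}^\dagger V_{\mathsf{o}}^{-1}],
\end{equation*}
where the first factor is bounded by~(ii), the second by item~1 of Proposition~\ref{prop:inthreeterms}, and the third vanishes by~\eqref{eq:eJ22V23} combined with item~2 of Proposition~\ref{prop:inthreeterms}. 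For item~2, the same buffering gives
\begin{equation*}
V_{\mathsf{uo}} V_{\mathsf{o}}^{-1} V_{\mathsf{oa}}(V_{\mathsf{s}}/V_{\mathsf{o}})^{-1} = V_{\mathsf{uo}}\,[V_{\mathsf{o}}^{-1} V_{\mathsf{oa}}\, e^{J_{\mathsf{a}}^\dagger T}]\,[e^{J_{\mathsf{a}}T}(V_{\mathsf{s}}/V_{\mathsf{o}})e^{J_{\mathsf{a}}^\dagger T}]^{-1} e^{J_{\mathsf{a}}T},
\end{equation*}
which vanishes by~(i),~(iii), and the Hurwitz property of $J_{\mathsf{a}}$, and
\begin{equation*}
V_{\mathsf{ua}}(V_{\mathsf{s}}/V_{\mathsf{o}})^{-1} = [V_{\mathsf{ua}}\, e^{J_{\mathsf{a}}^\dagger T}]\,[e^{J_{\mathsf{a}}T}(V_{\mathsf{s}}/V_{\mathsf{o}})e^{J_{\mathsf{a}}^\dagger T}]^{-1} e^{J_{\mathsf{a}}T},
\end{equation*}
which vanishes by~(ii), boundedness of the middle factor, and $e^{J_{\mathsf{a}}T}\to 0$.

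The main obstacle is the new boundedness fact~(ii): unlike the quantities handled in Proposition~\ref{prop:inthreeterms}, both exponentials in the original definition of $V_{\mathsf{ua}}$ sit on the ``wrong'' side of the buffer $e^{J_{\mathsf{a}}^\dagger T}$, so one must commit to the specific change of variables $s = T-t$ to expose the two-sided exponential decay that makes the uniform bound possible. Once~(ii) is in place, the remainder is a routine bookkeeping exercise in the buffer-grouping style already calibrated in Section~\ref{sec:pfnonpositive}.
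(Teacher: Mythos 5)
Your proof is correct and follows the same buffer-term grouping strategy as the paper, so the structural comparison is essentially exact. The one genuine difference is your fact~(ii): you prove only that $\widetilde V_{\mathsf{ua}}(T) := V_{\mathsf{ua}}(T)\, e^{J_{\mathsf{a}}^\dagger T}$ is \emph{uniformly bounded}, via the change of variables $s = T-t$ and the observation that $\sup_{\tau \geq 0}\|e^{-J_{\mathsf{u}}\tau}\| < \infty$ while $\|C_{\mathsf{a}}^\dagger e^{J_{\mathsf{a}}^\dagger s}\|$ is integrable on $[0,\infty)$. The paper's Lemma~\ref{lem:tildev32} establishes the strictly stronger fact that $\widetilde V_{\mathsf{ua}} \to 0$, at the cost of a slightly fussier two-case estimate involving $\alpha_1 \neq \alpha_2$ versus $\alpha_1 = \alpha_2$. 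For the proposition at hand, your weaker fact is perfectly sufficient, because in each of the four places where $\widetilde V_{\mathsf{ua}}$ appears, some other factor in the buffered product already vanishes ($e^{J_{\mathsf{a}}T}\to 0$, or the group $\widetilde V_{\mathsf{oa}}^\dagger \widetilde V_{\mathsf{o}}^{-1} e^{J_{\mathsf{o}}T}\to 0$). However, the stronger vanishing statement is not idle: it is needed in the later proof of item~3 of Theorem~\ref{thm:limit0}, where the term $V_{\mathsf{ua}}(V_{\mathsf{s}}/V_{\mathsf{o}})^{-1}V_{\mathsf{ua}}^\dagger = \widetilde V_{\mathsf{ua}}\,[e^{J_{\mathsf{a}}T}(V_{\mathsf{s}}/V_{\mathsf{o}})e^{J_{\mathsf{a}}^\dagger T}]^{-1}\,\widetilde V_{\mathsf{ua}}^\dagger$ appears with a merely \emph{bounded} middle factor and no vanishing companion, so boundedness of $\widetilde V_{\mathsf{ua}}$ alone would not close that case. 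Your setup can, of course, be upgraded to vanishing by dominated convergence (the integrand $e^{-J_{\mathsf{u}}(T-s)}C_{\mathsf{u}} C_{\mathsf{a}}^\dagger e^{J_{\mathsf{a}}^\dagger s}$ goes to zero pointwise in $s$ and is dominated by an integrable function uniformly in $T$), which arguably gives a cleaner derivation of Lemma~\ref{lem:tildev32} than the paper's. The remaining minor deviation --- your re-derivation of the vanishing of $V_{\mathsf{uo}}V_{\mathsf{o}}^{-1}V_{\mathsf{oa}}(V_{\mathsf{s}}/V_{\mathsf{o}})^{-1}$ via your fact~(iii) rather than by directly invoking item~2 of Theorem~\ref{th:theorem6prop2} (and its Hermitian conjugate) as the paper does --- is correct but a little longer than necessary.
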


As mentioned earlier, proving that the limits of the above products vanishes cannot be done by showing that each term in the products has a limit. Instead, we need to consider grouping  of terms and introduce one more buffer term. For our purpose here, the following term will be relevant:
\begin{equation}\label{eq:deftildev32}
\widetilde V_{\mathsf{ua}}(T):= V_{\mathsf{ua}} e^{J_{\mathsf{a}}^\dagger T }= \int_0^T e^{-J_{\mathsf{u}} t} C_{\mathsf{u}} C_{\mathsf{a}}^\dagger e^{J_{\mathsf{a}}^\dagger (T - t)} \rrd t.
\end{equation}
We have
\begin{lemma}\label{lem:tildev32}
It holds that $\lim_{T\to\infty} \widetilde V_{\mathsf{ua}}(T) = 0$.
\end{lemma}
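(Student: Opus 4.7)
The plan is to prove the lemma by a direct norm estimate on the integrand, exploiting the fact that both exponential factors decay: $e^{-J_{\mathsf{u}}t}$ decays in $t$ because $J_{\mathsf{u}}$ has eigenvalues with strictly positive real parts, and $e^{J_{\mathsf{a}}^\dagger(T-t)}$ decays in $T-t$ because $J_{\mathsf{a}}^\dagger$ has eigenvalues with strictly negative real parts. This is the key structural point: unlike the terms handled in Proposition~\ref{prop:inthreeterms}, here we never have to pair a decaying factor against a growing factor, so no ``buffer'' is needed; a straightforward estimate suffices.

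Concretely, I would pick any constant
$$\gamma \in \Bigl(0,\, \min\bigl\{\min_{\lambda\in\spec(J_{\mathsf{u}})}\re(\lambda),\ -\max_{\mu\in\spec(J_{\mathsf{a}})}\re(\mu)\bigr\}\Bigr).$$
Because $J_{\mathsf{u}}$ and $J_{\mathsf{a}}^\dagger$ are in (upper-triangular) Jordan form with the indicated spectra, the standard polynomial-times-exponential bound on matrix exponentials (a polynomial in $t$ can be absorbed at the cost of shrinking the exponential rate) yields constants $M_1,M_2>0$ such that
$$\|e^{-J_{\mathsf{u}}t}\| \leq M_1 e^{-\gamma t}, \qquad \|e^{J_{\mathsf{a}}^\dagger s}\| \leq M_2 e^{-\gamma s}, \qquad \text{for all } t,s\geq 0.$$

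Inserting these bounds into the definition~\eqref{eq:deftildev32} of $\widetilde V_{\mathsf{ua}}(T)$ and using submultiplicativity of the operator norm gives
$$\|\widetilde V_{\mathsf{ua}}(T)\| \leq M_1 M_2 \|C_{\mathsf{u}}\|\|C_{\mathsf{a}}\| \int_0^T e^{-\gamma t}\, e^{-\gamma(T-t)}\,\rrd t = M_1 M_2 \|C_{\mathsf{u}}\|\|C_{\mathsf{a}}\|\, T e^{-\gamma T},$$
and the right-hand side tends to $0$ as $T\to\infty$.

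There is no real obstacle here beyond correctly writing the exponent balance and justifying the exponential bound on the matrix exponentials when $J_{\mathsf{u}}$ and $J_{\mathsf{a}}$ are not diagonal; the polynomial prefactors arising from Jordan blocks are harmless since we chose $\gamma$ strictly smaller than the minimal moduli of the relevant real parts. The whole argument takes only a few lines.
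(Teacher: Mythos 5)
Your proof is correct and takes essentially the same approach as the paper: a direct norm estimate of the integrand using exponential decay of $e^{-J_{\mathsf{u}}t}$ and $e^{J_{\mathsf{a}}^\dagger(T-t)}$. The only cosmetic difference is that you equalize the two decay rates to a common $\gamma$, which collapses the paper's two-case integral $\int_0^T e^{(\alpha_2-\alpha_1)t}\,\rrd t$ into the single bound $Te^{-\gamma T}$.
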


\begin{proof}
Since $-J_{\mathsf{u}}$ and $J_{\mathsf{a}}$ are Hurwitz, there exist positive constants $c_1$, $c_1$, $\alpha_1$, and $\alpha_2$ such that
$$\|e^{-J_{\mathsf{u}} t} C_{\mathsf{u}} \|_F \leq c_1 e^{-\alpha_1 t} \quad \mbox{and} \quad \|e^{J_{\mathsf{a}} t} C_{\mathsf{a}} \|_F \leq c_2 e^{-\alpha_2 t},$$
where $\|\cdot \|_F$ is the Frobenius norm. 
Then, using sub-multiplicativity, we obtain that
\begin{equation*}
\|\widetilde V_{\mathsf{ua}}\|_F  \leq \int_0^T \|e^{-J_{\mathsf{u}} t} C_{\mathsf{u}} C_{\mathsf{a}}^\dagger e^{J_{\mathsf{a}}^\dagger (T - t)}  \|_F \, \rrd t \leq c_1c_2 e^{-\alpha_2 T}\int_0^T e^{(\alpha_2 - \alpha_1) t}  \rrd t.
\end{equation*}
It then follows that  
$$
\|\widetilde V_{\mathsf{ua}}\|_F \leq   
\begin{cases}
\frac{c_1c_2}{\alpha_2-\alpha_1} (e^{-\alpha_1 T}- e^{-\alpha_2 T}) & \mbox{if } \alpha_1 \neq \alpha_2, \\
c_1c_2 e^{-\alpha_2 T} T & \mbox{if } \alpha_1 = \alpha_2.
\end{cases}
$$
In either case, the norm converges to zero as $T \to \infty$ due to $\alpha_1,\alpha_2 >0$.
\end{proof}

With the lemma above, we will now establish Proposition~\ref{prop:item2thm6}.

\begin{proof}[Proof of Proposition~\ref{prop:item2thm6}]
We establish below the two items. 
\xc{Proof of item~1.} 
For the first term on the right hand side of~\eqref{eq:twoterms}, we first note that $V_{\mathsf{uo}}$, defined in~\eqref{eq:defv3132}, is bounded in the limit as $T \to \infty$ since $-J_{\mathsf{u}}$ is Hurwitz and $J_{\mathsf{o}}$ has imaginary eigenvalues (hence the entries of $e^{-J_{\mathsf{u}} t}$ decay exponentially fast while the entries of $e^{-J_{\mathsf{o}}t}$ grow  at most polynomially fast), so
\begin{equation}\label{eq:v31bounded}
\lim_{T \to \infty} \|V_{\mathsf{uo}}\| < \infty.
\end{equation}
Then, using item~3 of Theorem~\ref{th:theorem6prop2}, we conclude that the first term on the right hand side of~\eqref{eq:twoterms} vanishes as $T\to\infty$. For the second term, we have that 
\begin{align}
V_{\mathsf{ua}} (V_{\mathsf{s}}/V_{\mathsf{o}})^{-1}  V_{\mathsf{oa}}^\dagger V_{\mathsf{o}}^{-1} & = 
\left [ V_{\mathsf{ua}} e^{J_{\mathsf{a}}^\dagger T }\right ] \left [  e^{-J_{\mathsf{a}}^\dagger T} (V_{\mathsf{s}}/V_{\mathsf{o}})^{-1} e^{-J_{\mathsf{a}} T}  \right ] \left [e^{J_{\mathsf{a}} T}  V_{\mathsf{oa}}^\dagger V_{\mathsf{o}}^{-1} \right ] \notag\\
& = \widetilde V_{\mathsf{ua}} \left [  e^{-J_{\mathsf{a}}^\dagger T} (V_{\mathsf{s}}/V_{\mathsf{o}})^{-1} e^{-J_{\mathsf{a}} T}\right ] \left [\widetilde V_{\mathsf{oa}} \widetilde V_{\mathsf{o}}^{-1} e^{J_{\mathsf{o}}T} \right ] \notag \\
& = \widetilde V_{\mathsf{ua}} \left [ e^{J_{\mathsf{a}}T} (V_{\mathsf{s}}/V_{\mathsf{o}}) e^{J_{\mathsf{a}}^\dagger T}\right ]^{-1} \left [\widetilde V_{\mathsf{oa}} \widetilde V_{\mathsf{o}}^{-1} e^{J_{\mathsf{o}}T} \right ], \label{eq:fourththreeterms}
\end{align}
where the second equality follows from~\eqref{eq:deftildev32} and~\eqref{eq:eJ22V23}. By Lemma~\ref{lem:tildev32}, the first term of~\eqref{eq:fourththreeterms} vanishes as $T\to\infty$. By items~1 and~2 of Proposition~\ref{prop:inthreeterms}, the second term is bounded in the limit and the last term of~\eqref{eq:fourththreeterms} vanishes as $T\to\infty$. Thus, 
\begin{equation}\label{eq:322212321}
\lim_{T\to \infty} V_{\mathsf{ua}} (V_{\mathsf{s}}/V_{\mathsf{o}})^{-1}  V_{\mathsf{oa}}^\dagger V_{\mathsf{o}}^{-1} =0.
\end{equation}

\xc{Proof of item~2.} 
For the first term on the right hand side of~\eqref{eq:defv3v212p2}, note that $V_{\mathsf{uo}}$ is bounded as shown in~\eqref{eq:v31bounded} and that by item~2 of Theorem~\ref{th:theorem6prop2}, $V_{\mathsf{o}}^{-1} V_{\mathsf{oa}} (V_{\mathsf{s}}/V_{\mathsf{o}})^{-1}$ converges to $0$ as $T\to\infty$, so this entire term vanishes as well. 
For the second term, we write
\begin{multline}\label{eq:v32221-1}
V_{\mathsf{ua}} (V_{\mathsf{s}}/V_{\mathsf{o}})^{-1} = \left [V_{\mathsf{ua}} e^{J_{\mathsf{a}}^\dagger T} \right ] \left [ e^{-J_{\mathsf{a}}^\dagger T} (V_{\mathsf{s}}/V_{\mathsf{o}})^{-1} e^{-J_{\mathsf{a}} T}\right ] e^{J_{\mathsf{a}}T} \\= \widetilde V_{\mathsf{ua}} \left [ e^{J_{\mathsf{a}}T} (V_{\mathsf{s}}/V_{\mathsf{o}}) e^{J_{\mathsf{a}}^\dagger T}\right ]^{-1} e^{J_{\mathsf{a}}T}.
\end{multline}
By Lemma~\ref{lem:tildev32}, we have that the first term on the right hand side of~\eqref{eq:v32221-1} vanishes.  
By item~1 of Proposition~\ref{prop:inthreeterms}, the second term is bounded in the limit. Finally, since $J_{\mathsf{a}}$ is Hurwitz, the last term vanishes in the limit $T \to \infty$. 
\end{proof}

\subsection{Proof of item~3 of Theorem~\ref{thm:limit0}}
We show here that $\lim_{T\to\infty} V_{\mathsf{us}}(T)V_{\mathsf{s}}(T)^{-1} V_{\mathsf{us}}(T)^\dagger = 0$. 
Using again the expression for $V_{\mathsf{s}}^{-1}$ in~\eqref{eq:schurcompV2} and the decomposition of $V_{\mathsf{us}}$ in~\eqref{eq:defv3132}, we have that
\begin{align}
V_{\mathsf{us}}V_{\mathsf{s}}^{-1} V_{\mathsf{us}}^\dagger & = 
\begin{bmatrix}
V_{\mathsf{uo}}, V_{\mathsf{ua}}
\end{bmatrix}
\begin{bmatrix}
 V_{\mathsf{o}}^{-1}+V_{\mathsf{o}}^{-1} V_{\mathsf{oa}} (V_{\mathsf{s}}/V_{\mathsf{o}})^{-1} V_{\mathsf{oa}}^\dagger V_{\mathsf{o}}^{-1} & -V_{\mathsf{o}}^{-1} V_{\mathsf{oa}} (V_{\mathsf{s}}/V_{\mathsf{o}})^{-1}   \\
- (V_{\mathsf{s}}/V_{\mathsf{o}})^{-1}  V_{\mathsf{oa}}^\dagger V_{\mathsf{o}}^{-1}  & (V_{\mathsf{s}}/V_{\mathsf{o}})^{-1}
\end{bmatrix}
\begin{bmatrix}
V_{\mathsf{uo}}^\dagger \\ 
V_{\mathsf{ua}}^\dagger
\end{bmatrix} \notag \\
&= V_{\mathsf{uo}}\left[ V_{\mathsf{o}}^{-1}+V_{\mathsf{o}}^{-1} V_{\mathsf{oa}} (V_{\mathsf{s}}/V_{\mathsf{o}})^{-1} V_{\mathsf{oa}}^\dagger V_{\mathsf{o}}^{-1} \right] V_{\mathsf{uo}}^\dagger  \label{eq:1V323} \\
& \hspace{.4cm} - V_{\mathsf{uo}} \left[  V_{\mathsf{o}}^{-1} V_{\mathsf{oa}} (V_{\mathsf{s}}/V_{\mathsf{o}})^{-1} \right] V_{\mathsf{ua}}^\dagger \label{eq:2V323} \\
& \hspace{.4cm} - V_{\mathsf{ua}} \left [(V_{\mathsf{s}}/V_{\mathsf{o}})^{-1}  V_{\mathsf{oa}}^\dagger V_{\mathsf{o}}^{-1}  \right ] V_{\mathsf{uo}}^\dagger \label{eq:3V323} \\
& \hspace{.4cm} + V_{\mathsf{ua}} \left[ (V_{\mathsf{s}}/V_{\mathsf{o}})^{-1} \right] V_{\mathsf{ua}}^\dagger. \label{eq:4V323}
\end{align}
For the first term~\eqref{eq:1V323}, we note that $V_{\mathsf{uo}}$ (and hence, $V_{\mathsf{uo}}^\dagger$) is bounded in the limit as shown in~\eqref{eq:v31bounded} and that the expression in the bracket vanishes in the limit by item~3 of Theorem~\ref{th:theorem6prop2}.  Next, we have that the two terms in~\eqref{eq:2V323} and~\eqref{eq:3V323} are Hermitian conjugate. We see that~\eqref{eq:3V323} vanishes in the limit as $T \to \infty$ as it is the product of the term in~\eqref{eq:322212321} and $V_{\mathsf{uo}}^\dagger$ --- in the limit the former vanishes while the latter is again bounded. For the last term~\eqref{eq:4V323}, we have that
$$
V_{\mathsf{ua}} \left[ (V_{\mathsf{s}}/V_{\mathsf{o}})^{-1} \right] V_{\mathsf{ua}}^\dagger = \widetilde V_{\mathsf{ua}} \left [ e^{J_{\mathsf{a}}T} (V_{\mathsf{s}}/V_{\mathsf{o}}) e^{J_{\mathsf{a}}^\dagger T}\right ]^{-1}\widetilde V_{\mathsf{ua}}^\dagger,
$$
where $\widetilde V_{\mathsf{ua}}$ and $\widetilde V_{\mathsf{ua}}^\dagger$ vanish as $T\to\infty$ by Lemma~\ref{lem:tildev32} and the expression in the bracket is bounded by item~1 of Proposition~\ref{prop:inthreeterms}. \qed

\bibliographystyle{amsplain}
\bibliography{riccatibib.bib}
\end{document}